\newcommand\conv{\hbox{conv}}
\newcommand\BOmega{{\bf{\Omega}}}
\newcommand\R{{\mathbb{R}}}
\newcommand\C{{\mathbb{C}}}
\newcommand\D{{\mathbf{D}}}
\newcommand\g{{\mathbf{g}}}
\renewcommand\P{{\mathbf{P}}}
\newcommand\E{{\mathbf{E}}}
\renewcommand\Im{{\operatorname{Im}}}
\renewcommand\Re{{\operatorname{Re}}}
\newcommand\eps{{\varepsilon}}
\newcommand\trace{\operatorname{trace}}
\newcommand\tr{\operatorname{trace}}
\newcommand\dist{\operatorname{dist}}
\newcommand\Span{\operatorname{Span}}
\renewcommand\a{\xi}
\newcommand\dd{\partial}
\newcommand\BI{{\mathbf I}}
\renewcommand\Pr{{\mathbf P }}
\newcommand\CE{{\mathcal E}}
\theoremstyle{plain}
  \newtheorem{theorem}[subsection]{Theorem}
  \newtheorem{conjecture}[subsection]{Conjecture}
  \newtheorem{proposition}[subsection]{Proposition}
  \newtheorem{lemma}[subsection]{Lemma}
  \newtheorem{corollary}[subsection]{Corollary}
\theoremstyle{remark}
  \newtheorem{remark}[subsection]{Remark}
\theoremstyle{definition}
\begin{document}

\title[Universality at the hard edge]
{Random matrices:\\ The distribution of the smallest singular values}

\author{Terence Tao}
\address{Department of Mathematics, UCLA, Los Angeles CA 90095-1555}
\email{tao@math.ucla.edu}
\thanks{T. Tao is supported by a grant from the MacArthur Foundation, and by
NSF grant DMS-0649473.}

\author{Van Vu}
\address{Department of Mathematics, Rutgers University, Piscataway NJ 08854-8019}
\email{vanvu@math.rutgers.edu}
\thanks{V.   Vu  is supported by  NSF Career Grant 0635606.}
\begin{abstract}
Let $\a$ be a real-valued random variable of mean zero and
 variance $1$. Let $M_n(\a)$ denote the $n \times n$ random matrix
 whose entries are iid copies of $\a$ and $\sigma_n(M_n(\a))$ denote the least singular value of $M_n(\a)$.
The quantity $\sigma_n(M_n(\a))^2$ is thus the least eigenvalue of the Wishart matrix $M_n M_n^{\ast}$.

  We show that (under a finite moment assumption) the probability distribution $n
  \sigma_n(M_n(\a))^2$ is {\it universal} in the sense that it
  does not depend on the distribution of $\a$. In
  particular, it converges to the same limiting distribution as
  in the special case when $\a$ is real gaussian. (The limiting distribution was computed explicitly in this case
  by Edelman.) 
  
% Our main new idea is  the so-called ``projection`` method. Roughly speaking, we
  %randomly project the matrix $M_n(\a)$ onto a $s$-dimensional subspace for some
   %medium-valued $s$
  %(or more precisely, onto the orthogonal complement of the bottom $n-s$ rows of the matrix), 
  %and exploit
  %a Berry-Ess\'een-type central limit theorem to ensure that this projection
   %is distributed like a gaussian matrix.
 % We then use  this property to compare the least singular value of of $M_{n} (\a)$ with the
  %least singular value of an $s \times s$ gaussian matrix (both properly normalized).

   We also proved a similar result  for complex-valued random variables of mean zero,
   with real and imaginary parts having variance $1/2$ and covariance
   zero. Similar results are also obtained for the joint distribution of
the bottom $k$ singular values of $M_n(\a)$ for any fixed $k$ (or
even for $k$ growing as a small power of $n$) and for rectangular
matrices.

Our approach is motivated by the general idea of   ``property testing'' from combinatorics and theoretical 
computer science. This seems to be a new approach in the study of spectra of random matrices
and combines tools from various areas of mathematics. 

\end{abstract}

\maketitle

\section{Introduction}

Let $\a$ be a  real or complex-valued random variable and
$M_n(\a)$ denote the random $n \times n$ matrix whose entries are
i.i.d. copies of  $\a$. In this paper, we always impose one of the
following two normalizations on $\a$:
\begin{itemize}
\item ($\R$-normalization) $\a$ is real-valued with $\E \a = 0$ and $\E \a^2 = 1$.
\item ($\C$-normalization) $\a$ is complex-valued with $\E \a = 0$, $\E \Re(\a)^2 = \E \Im(\a)^2 =
\frac{1}{2}$, and $\E \Re(\a) \Im(\a) = 0$.
\end{itemize}
Note in both cases $\a$ has mean zero and variance one.  A
 model example of a $\R$-normalized random variable is the \emph{real gaussian}
 $\g_\R \equiv N(0,1)$, while a model example of a $\C$-normalized random variable
 is the \emph{complex gaussian} $\g_\C$ whose real and imaginary parts are iid copies of $\frac{1}{\sqrt{2}} \g_\R$.
  Another $\R$-normalized random variable of
  interest is \emph{Bernoulli}, in which $\a$ equals $+1$ or $-1$ with an equal probability $1/2$ of each.

A basic problem in random matrix theory is to understand the
distribution of singular values in the asymptotic limit $n \to
\infty$.  Given an $m \times n$ matrix $M$, let
$$ \sigma_1(M) \geq \ldots \geq \sigma_{\min(n,m)}(M) \geq 0$$
denote the non-trivial singular values of $M$. Our paper will be
focused on the ``hard edge'' of this spectrum, and in particular
on the least singular value $\sigma_n(M)$, in the case of square matrices $m=n$, but let us begin with a brief review of some known results for the rest of the spectrum.

There is a general belief  that (under reasonable hypotheses) the limiting distributions 
concerning the spectrum of a large random matrix should be ``universal'', in the sense that they should not 
depend too strongly on the distributions of the entries of the matrix. 
In particular, one usually expects that the asymptotic statistical properties that are known for 
matrices with (independent) gaussian entries should also hold for 
matrices with more general entries (such as Bernoulli). 
A well-known conjecture (now a theorem) of this type is the Circular Law conjecture (see
\cite[Chapter 10]{BS} and \cite{tv-circular}). Another example is that of Dixon's conjectures 
(see \cite[Conjectures 1.2.1 and 1.2.2]{Mehta}).

Universality has been proved for several statistics concerning 
the random matrix model $M_{n}(\a)$.  
As is well known, the bulk distribution of the singular values of $M_n(\a)$
is governed by the \emph{Marchenko-Pastur law}. It has been shown
that  for any $t \geq 0$ and any $\R$- or $\C$-normalized $\a$,
$$ \frac{1}{n} | \{ 1 \leq i \leq n: \frac{1}{n} \sigma_i( M_n( \a ) )^2 \leq t \}
| \to \frac{1}{2\pi} \int_0^{\min(t,4)}
 \sqrt{\frac{4}{x}-1}\ dx$$
as $n \to \infty$, both in the sense of probability and in the
almost sure sense.  For more details, we refer to \cite{marchenko, pastur,
yin, BS}. (In literature, very frequently one views   $\sigma_{i } (M_{n}
 (\a))^{2} $ as the eigenvalues of the sample covariance matrix $M_{n} (\a) M_{n} (\a)^{\ast} $
 and so it is more traditional to write down 
 the limiting distributions in term of $\sigma^{2}$.) 

The next objects to consider are the extremal  singular values. 
The distribution of the largest
singular value $\sigma_1$ (and more generally, the joint distribution of the top $k$ singular values) was computed for the gaussian case 
by Johansson \cite{Joh} and Johnstone \cite{John}. This distribution is governed by the Tracy-Widom law 
(and more generally, the Airy kernel). In particular, one has 

$$\frac{\sigma_{n}^{2 } -4}{ 2^{4/3} n^{-2/3} } \rightarrow TW $$ where $TW$ denotes the Tracy-Widom 
distribution. More recently, Soshnikov \cite{soshnikov} showed that 
the same result holds for all random matrices with normalized subgaussian entries.

Now we turn to the ``hard edge'' of the
spectrum, and specifically to the least singular value
$\sigma_n(M_n(\a))$. The problem of estimating the least singular value of a random matrix 
has a long history. It first surfaced in the work 
of von Neuman and Goldstein concerning numerical inversion of large matrices 
\cite{vonG}. Later, Smale \cite{smale} made a specific conjecture about the magnitude of 
$\sigma_{n}$. Motivated by a question of  Smale, 
Edelman computed the distrubition of $\sigma_{n } (\a) $ for 
the real and complex gaussian cases $\a = \g_\R, \g_\C$ \cite{edelman}:

\begin{theorem}[Limiting distributions for gaussian models]\label{edel-thm}  For any fixed $t \geq 0$, we have
\begin{equation}\label{edu}
 \P( n \sigma_n( M_n( \g_\R ) )^2 \leq t ) = \int_0^t \frac{1+\sqrt{x}}{2\sqrt{x}} e^{-(x/2 + \sqrt{x})}\ dx + o(1)
\end{equation}
as well as the exact (!) formula
$$ \P( n \sigma_n( M_n( \g_\C ) )^2 \leq t ) = \int_0^t e^{-x}\ dx.$$
\end{theorem}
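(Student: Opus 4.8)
The plan is to transfer the problem from singular values of $M_n$ to eigenvalues of the Wishart matrix $W_n := M_n M_n^{\ast}$ (so that $\sigma_n(M_n)^2 = \lambda_{\min}(W_n)$), use the classical explicit joint eigenvalue density of the relevant Laguerre ensemble, and evaluate the gap probability $\P(\lambda_{\min}(W_n) > t)$ directly by a translation of the integration variables. The scaling $t \mapsto t/n$ then yields the stated limits after differentiation.

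For the complex model $\a = \g_\C$: when $M_n$ has i.i.d. $\g_\C$ entries, the standard Jacobian computation shows that the eigenvalues of $W_n$ have joint density proportional to $\prod_{1\le i<j\le n}(\lambda_i-\lambda_j)^2 \prod_{i=1}^n e^{-\lambda_i}$ on $(0,\infty)^n$ (the Laguerre unitary ensemble with vanishing parameter). The key point is that the substitution $\lambda_i = t+\mu_i$ maps $\{\lambda_{\min}(W_n) > t\}$ bijectively onto the full orthant $\{\mu_i > 0\}$, preserves the Vandermonde determinant, and multiplies the weight by $e^{-nt}$; since the density integrates to $1$, we obtain $\P(\lambda_{\min}(W_n) > t) = e^{-nt}$, valid for every $n$ with no error term. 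Equivalently $\P(n\sigma_n(M_n(\g_\C))^2 \le t) = 1 - e^{-t} = \int_0^t e^{-x}\,dx$, which is the asserted exact identity.

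For the real model $\a = \g_\R$: now the eigenvalues of $W_n$ form the Laguerre orthogonal ensemble, with joint density proportional to $\prod_{i<j}|\lambda_i - \lambda_j|\,\prod_i \lambda_i^{-1/2} e^{-\lambda_i/2}$ on $(0,\infty)^n$. The weight $\prod_i \lambda_i^{-1/2}$ is not translation invariant, so the one-line argument fails; performing the same shift $\lambda_i = t + \mu_i$ and dividing by the normalizing integral leaves the finite-$n$ identity $\P(\lambda_{\min}(W_n) > t) = e^{-nt/2}\,\E\big[\det(I_n + t\,W_n^{-1})^{-1/2}\big]$. This admits a classical closed form: the expectation on the right is a confluent hypergeometric function ${}_1F_1$ of a matrix argument (equivalently it may be written as a Pfaffian with entries built from incomplete gamma functions), and one then sets $t = s/n$ and performs the $n\to\infty$ asymptotic analysis. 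One cannot simply Taylor expand $\det(I_n + (s/n)W_n^{-1})^{-1/2}$, since $W_n^{-1}$ has operator norm of order $n$ (indeed $\lambda_{\min}(W_n)$ is of order $1/n$, so that $\E[\tr W_n^{-1}] = \infty$); the limit is genuinely controlled by the joint law of the order-$1/n$ eigenvalues, i.e. by the hard-edge (Bessel) scaling limit of the ensemble. Carrying out this analysis yields $\P(n\lambda_{\min}(W_n) > s) \to e^{-(s/2 + \sqrt{s})}$; differentiating, $\frac{d}{ds}\big(1 - e^{-(s/2+\sqrt{s})}\big) = \frac{1+\sqrt{s}}{2\sqrt{s}}\,e^{-(s/2+\sqrt{s})}$, which is precisely the density in \eqref{edu}, the $o(1)$ absorbing the finite-$n$ corrections (including the mild dependence on the parity of $n$ in the exact formula).

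The main obstacle is exactly the real case: in contrast with the complex case there is no algebraic collapse, so one must both identify the correct finite-$n$ special-function representation of the gap probability and extract its behaviour in the hard-edge regime, where the $\lambda^{-1/2}$ singularity of the weight sits — precisely the regime governing $\lambda_{\min}$. An alternative that avoids the explicit formula would be to first prove that $n\lambda_{\min}(W_n)$ converges to the smallest point of the limiting Bessel-type point process and then evaluate that process's gap probability on $(0,s)$; this is conceptually cleaner but requires developing the hard-edge limit theory, which is more machinery than the direct Laguerre-ensemble computation sketched above.
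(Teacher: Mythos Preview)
The paper does not prove Theorem~\ref{edel-thm}; it is quoted from Edelman~\cite{edelman} and used as a black box. The only information the paper gives about the proof is that Edelman works from the exact joint eigenvalue densities \eqref{jointreal}, \eqref{jointcomplex} of the Wishart matrix, and that the $o(1)$ in the real case comes from an asymptotic for the Tricomi function. So your proposal is not competing with a proof in the paper, but with Edelman's original argument as summarized there.

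Your approach is the same one: pass to the Wishart eigenvalue density and compute the gap probability. For the complex case your translation argument is correct and complete, and it is exactly the slick observation that makes the complex formula exact for every $n$; this matches what the paper advertises.

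For the real case, however, there is a genuine gap. You correctly set up the shifted integral and arrive at $\P(\lambda_{\min}(W_n)>t)=e^{-nt/2}\,\E\big[\det(I_n+tW_n^{-1})^{-1/2}\big]$, and you correctly diagnose why the naive expansion fails. But the sentence ``Carrying out this analysis yields $\P(n\lambda_{\min}(W_n)>s)\to e^{-(s/2+\sqrt s)}$'' is precisely the content of the theorem in the real case, and you have not carried it out. Identifying the expectation as a confluent hypergeometric function of matrix argument, or as a Pfaffian, is a restatement rather than a computation; the substance of Edelman's proof is exactly the special-function asymptotics (the Tricomi function the paper alludes to) that extract $e^{-(s/2+\sqrt s)}$ from this expression. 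Your alternative route via the hard-edge Bessel limit would work, but as you say it imports at least as much machinery. Either way, the real case in your proposal is an outline of where the work lies, not a proof.
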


Both integrals can be computed explicitly. By change of variables, one can show 

\begin{equation} \label{eqn:explicit1}
 \int_0^t \frac{1+\sqrt{x}}{2\sqrt{x}} e^{-(x/2 + \sqrt{x})}\ dx = 1- e^{-t/2-\sqrt t}.
\end{equation}

Furthermore, it is clear that 

\begin{equation} \label{eqn:explicit2}
 \int_0^t  e^{-x}\ dx = 1- e^{-t}.
\end{equation}

In fact, 
one can compute the joint distribution of the 
bottom $k$ singular values of $M_n(\g_\R)$ or $M_n(\g_\C)$
 for any constant  $k$, as was done by Forrester\cite{forrester}.
  The formula, which involves the Bessel 
 kernel,  is more complicated and is deferred to Section \ref{section:extension}. In \cite{RR}, a different approach was proposed by 
 Rider and Ramirez, which lead to description that does not involve Bessel kernels directly. 
  Ben Arous and Peche \cite{peche} generalized Forrester's result to matrices whose entries are 
 gaussian summable.

 The error term $o(1)$ in \eqref{edu} is not explicitly stated in \cite{edelman} (it relies on an asymptotic for the Tricomi function),
 but our Theorem \ref{main-thm} below will imply that it is of the form $O(n^{-c})$ for some absolute constant $c > 0$.

The proofs of the above results  relied on  special algebraic
properties of the gaussian models $\g_\R$, $\g_\C$ (and in
particular on various exact identities  enjoyed by such models). 
For instance, Edelman's  proof used the exact joint distribution of the eigenvalues of 
$\frac{1}{n} M_{n}  M_{n} ^{\ast}$, which are available in the gaussian case:

\begin{equation} \label{jointreal} (Real \,\,\, gaussian)\,\,\,
c_{1}(n) \prod_{1 \le i < j \le n} (\lambda_i -\lambda_j) \prod_{i=1} ^{n} \lambda_{i}^{-1/2}
 \exp(- \sum_{i=1}^n \lambda_i /2)
\end{equation}

\begin{equation} \label{jointcomplex} (Complex \,\,\, gaussian)\,\,\,
c_{2} (n) \prod_{1 \le i < j \le n} |\lambda_i -\lambda_j|^2 \exp(- \sum_{i=1}^n \lambda_i/2 ).
\end{equation}

Here $c_{1}(n)$ and $c_{2}(n)$ are normalizing factors. 
It appears that this approach do not extend to the case of more general $\R$- or
$\C$-normalized models $\a$, where the above formulae are not available.

In the general setting  (and in particular in discrete cases such as  Bernoulli), it is already not trivial to show that the probability that   $\sigma_n(M_n(\a))$ is positive tends to one with $n$
(this statement is, of course, obvious in the continuous case, such as gaussian, by a dimension argument). 
This was first done by Koml\'os  \cite{Kom1,Kom2}. For more recent developments along this line
we refer to \cite{ kks, tv-sing, BVW}. These papers give better and better bounds on the rate of convergence (to one) of the probability in question, but do not give any quantitative estimate on 
$\sigma_{n}$. 

In the last few years,we have seen considerable progresses in
the problem of  estimating 
$\sigma_n$ and its tail distribution. In \cite{tv-det}, the present authors 
proved an (almost sure)  lower bound for the absolute value of the determinant  of a random Bernoulli matrix. 
As the absolute value of the determinant 
 is the product of the singular values, this result implies an (almost sure) lower bound 
of the form $\exp(-n^{1/2+o(1)})$ for the least singular value. 
A significant breakthrough was achieved by Rudelson \cite{rudelson}, who established a polynomial lower bound for  $\sigma_n(M_n(\a))$ and also tail estimates for a certain range. 
Rudelson's results were then extended by several authors \cite{rv}, \cite{rv0},
 \cite{tv-condition}, \cite{tv-condition2}, \cite{tv-circular}, \cite{tv-lsv}, \cite{tv-universal}, 
 using the machinery of Inverse Littlewood-Offord theorems, introduced in \cite{tv-condition}. 
  For instance, under the assumption of bounded fourth moment $\E |\a|^4 < \infty$, it was shown in \cite{rv0} that
$$ \P( n \sigma_n( M_n(\a) )^2 \leq t ) \leq f(t) + o(1)$$
for all fixed $t > 0$, where $f(t)$ goes to zero as $t \to 0$; similarly, in \cite{rv} it was shown that
$$ \P( n \sigma_n( M_n(\a) )^2 \geq t ) \leq g(t) + o(1)$$
for all fixed $t > 0$, where $g(t)$ goes to zero as $t \to \infty$.  Under the
stronger assumption that $\a$ is subgaussian, the lower tail estimate was improved in \cite{rv0} to
\begin{equation}\label{jump}
 \P( n \sigma_n( M_n(\a) )^2 \leq t ) \leq C t^{1/2} + c^n
\end{equation}
for some constants $C > 0$ and $0 < c < 1$ depending only on the subgaussian moments of $\a$.  At the other extreme,
with no moment assumptions on $\a$, the bound
$$ \P( n \sigma_n( M_n(\a) )^2 \leq n^{-1-\frac{5}{2}A-A^2} ) \leq n^{-A+o(1)}$$
was shown for any fixed $A > 0$ in \cite{tv-lsv}.

 A common feature of the above  mentioned results  is that they give  good
  upper and lower tail bounds on $n
\sigma_n(M_n(\a))^2$, but not the distributional law. In fact,
many papers \cite{rv, rv0, tv-condition2} are partially motivated by the
following conjecture of Spielman and Teng \cite[Conjecture 2]{spielman}.

\begin{conjecture} \label{conj:ST} Let $\a$ be the Bernoulli
random variable. Then there is a constant $0 < c<1$ such that for all $t \ge 0$

\begin{equation} \label{eqn:STconj}  \P( \sqrt n  \sigma_n( M_n( \a ) )  \leq t ) \le t  +  c^{
n}.
\end{equation}
\end{conjecture}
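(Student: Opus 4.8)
The plan is to establish the bound separately on two overlapping ranges of $t$, using different machinery on each, after observing that the gaussian model obeys the conjectured estimate with room to spare. Combining Theorem~\ref{edel-thm} with the identity \eqref{eqn:explicit1} and substituting $t \mapsto t^2$ gives, for each fixed $t \ge 0$,
\[
 \P\big( \sqrt n\,\sigma_n(M_n(\g_\R)) \le t \big) = 1 - e^{-t^2/2 - t} + o(1) = t - \tfrac{t^3}{3} + O(t^4) + o(1),
\]
so for small $t$ the real gaussian model satisfies the right-hand side of \eqref{eqn:STconj} with a cubic amount of slack; this slack will absorb the polynomial-size error coming from universality. The threshold between the two ranges will be a small negative power of $n$.

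\emph{The moderate range $n^{-c/4}\le t\le 1$.} For $t\ge 1$ the bound $t+c^n\ge 1$ is automatic, so fix a small constant $t_0$ and take $n^{-c/4}\le t\le t_0$. Since Bernoulli is $\R$-normalized, the universality theorem of this paper (Theorem~\ref{main-thm}) gives $\P(n\sigma_n(M_n(\a))^2\le t^2)=\P(n\sigma_n(M_n(\g_\R))^2\le t^2)+O(n^{-c})$ for a suitable $c>0$; combining with Theorem~\ref{edel-thm} and the display above,
\[
 \P\big(\sqrt n\,\sigma_n(M_n(\a))\le t\big)\le t-\tfrac{t^3}{3}+O(t^4)+O(n^{-c}).
\]
Taking $t_0$ small and using $t\ge n^{-c/4}$ makes $O(t^4)+O(n^{-c})$ strictly less than $\tfrac{t^3}{3}$, so $\P(\sqrt n\,\sigma_n(M_n(\a))\le t)\le t$ here, which is stronger than required.

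\emph{The small range $0\le t\le n^{-c/4}$.} Universality is useless here, so one argues directly. From $\sigma_n\ge n^{-1/2}\min_k\dist(R_k,H_k)$, with $R_k$ the $k$-th row and $H_k$ the span of the remaining rows, it suffices to bound $\P(\min_k\dist(R_k,H_k)\le t)$. Following Rudelson and Rudelson--Vershynin, split according to whether the relevant unit vectors are compressible or incompressible: the compressible case is eliminated by a $(\delta,\rho)$-net argument combined with the crude concentration of $M_n(\a)w$ about $\|w\|$, contributing $c_1^n$; in the incompressible case one bounds, for a single index $k$, the quantity $\P(\dist(R_k,H_k)\le t)=\P(|\langle R_k,u_k\rangle|\le t)$ by a Littlewood--Offord small-ball estimate, $u_k$ being the incompressible unit normal to $H_k$, and then passes from a single $k$ to $\min_k$ without losing a factor of $n$ by the averaging argument of Rudelson--Vershynin (at the cost of an additional $c_2^n$). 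Altogether this yields the relaxed bound $\P(\sqrt n\,\sigma_n(M_n(\a))\le t)\le C_0\, t+c_3^n$.

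\emph{Matching the constant, and the main obstacle.} The three steps above already prove the conjecture in the form $Ct+c^n$ --- in essence the Rudelson--Vershynin invertibility theorem. The outstanding point, namely getting the constant $1$ in place of an absolute $C_0$, is confined entirely to the window $c^n\le t\le n^{-c/4}$ and comes down to sharpening the small-ball input to $\P(|\langle R_1,u_1\rangle|\le t)=(1+o(1))\,t$ for the incompressible normals $u_1$ actually produced by $M_n(\a)$, rather than $\le C_0 t$. This is a local central limit statement: the conditional density of $\langle R_1,u_1\rangle$ at the origin must converge to $(2\pi\,\Var\langle R_1,u_1\rangle)^{-1/2}$ with $\Var\langle R_1,u_1\rangle\to 1$, \emph{uniformly} over the incompressible directions $u_1$ arising from the matrix, and the multiplicity constant hidden in the Rudelson--Vershynin averaging must be pinned down as well. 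I expect this uniform local limit law to be the crux. The geometric invertibility machinery is intrinsically lossy by absolute multiplicative constants --- coming from the compressible/incompressible dichotomy, the net parameters $(\delta,\rho)$, and the row average --- so closing the gap between $Ct+c^n$ and $t+c^n$ appears to need a genuinely sharper, and probably new, ingredient rather than a refinement of the currently available tools.
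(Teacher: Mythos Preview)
This statement is a \emph{conjecture} in the paper (the Spielman--Teng conjecture), and the paper does not claim to prove it. What the paper actually establishes is exactly your ``moderate range'' step: applying Theorem~\ref{main-thm} together with \eqref{eqn:explicit1} gives
\[
\P(\sqrt{n}\,\sigma_n(M_n(\a))\le t)=1-e^{-t^2/2-t}+O(n^{-c}),
\]
and since $1-e^{-t^2/2-t}<t$ for all $t>0$ (with slack $\sim t^3/3$ for small $t$), one obtains \eqref{eqn:STconj} for $t>n^{-c_0}$ for some $c_0>0$. The paper stops there and leaves the small-$t$ regime open.

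Your ``small range'' paragraph goes a bit further than the paper's discussion by sketching the Rudelson--Vershynin argument that yields $Ct+c^n$; the paper simply cites this as the known bound \eqref{jump}. You then correctly isolate the genuine obstruction: the Rudelson--Vershynin machinery loses absolute constants at several places (compressible/incompressible split, net arguments, row averaging), so pushing $C$ down to $1$ in the window $c^n\le t\le n^{-c_0}$ would require a sharp local limit input that is not currently available. This diagnosis is accurate and matches the paper's implicit stance.

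In short: you have not proved the conjecture, but neither does the paper; your partial result and your identification of the remaining gap coincide with what the paper provides.
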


In this paper, we introduce a new method to study small singular values. This method is analytic in nature
and enables us to prove the {\it universality } of   the limiting distribution
of $n \sigma_n (M_n (\a))^2$.

\begin{theorem}[Universality for the least singular value]\label{main-thm}  Let $\a$ be $\R$- or $\C$-normalized,
 and suppose $\E |\a|^{C_0} < \infty$ for some sufficiently large absolute constant $C_0$.
 Then for all $t>0$, we have
\begin{equation}\label{mna}
 \P( n \sigma_n( M_n( \a ) )^2 \leq t ) = \int_0^t \frac{1+\sqrt{x}}{2\sqrt{x}} e^{-(x/2 + \sqrt{x})}\ dx + O(n^{-c})
\end{equation}
if $\a$ is $\R$-normalized, and
$$ \P( n \sigma_n( M_n( \a ) )^2 \leq t ) = \int_0^t e^{-x}\ dx + O(n^{-c})$$
if $\a$ is $\C$-normalized, where $c>0$ is an absolute constant.
The implied constants in the $O(.)$ notation depend on $\E
|\a|^{C_0}$ but are uniform in $t$.
\end{theorem}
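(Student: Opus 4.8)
The plan is to compare the distribution of $n\sigma_n(M_n(\a))^2$ with that of the gaussian model $n\sigma_n(M_n(\g))^2$, for which Theorem \ref{edel-thm} gives the exact answer, and then invoke a swapping/replacement argument in the spirit of the Lindeberg exchange method, guided by the ``property testing'' heuristic mentioned in the abstract: to test whether $\sigma_n$ is small one only needs to examine a low-dimensional piece of the matrix, so most entries can be replaced one at a time without disturbing the statistic. Concretely, I would proceed in several stages. First, a \emph{regularization} step: introduce a smoothed version of the indicator $\mathbf{1}_{[0,t]}$ applied to $n\sigma_n^2$, or equivalently work with a resolvent-type quantity such as $\operatorname{Im}\operatorname{trace}(M_n M_n^\ast - z)^{-1}$ or with $\sum_i \varphi(n\sigma_i^2)$ for a suitable test function $\varphi$; the point is that the hard-edge statistic must be expressed as a smooth function of the entries before any exchange argument can be run. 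Second, an \emph{a priori control} step: using the results already quoted in the introduction (Rudelson \cite{rudelson}, Rudelson--Vershynin \cite{rv,rv0}, Tao--Vu \cite{tv-lsv}), one gets with overwhelming probability a lower bound $\sigma_n(M_n(\a)) \gg n^{-O(1)}$ and an upper bound on the operator norm $\sigma_1 = O(\sqrt n)$, and similarly for all matrices obtained along the exchange path; these are needed so that the resolvent/test-function expressions are bounded and the Taylor expansions below converge.

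Third, the \emph{exchange} step, which is the technical heart of the argument. Order the $n^2$ entries and replace them one at a time, passing from $M_n(\a)$ to $M_n(\g)$ through a sequence of $n^2$ hybrid matrices differing in a single entry. For each swap I would Taylor-expand the smoothed statistic to third or fourth order in the single entry being changed; the zeroth-order terms cancel, the first and second order terms match because $\a$ and $\g$ share the first two moments ($\E\a = \E\g = 0$, $\E\a^2=\E\g^2$, and in the $\C$ case also the covariance condition), and the remaining terms are controlled by $\E|\a|^{C_0}<\infty$ together with bounds on the derivatives of the statistic with respect to a single entry. The crucial input here is a \emph{stability estimate} for $\sigma_n$ (and for the nearby singular values, since $\sigma_n$ alone can be non-smooth when eigenvalues collide): changing one entry by $O(1)$ perturbs $n\sigma_n^2$ by an amount that, on the overwhelmingly-likely event from step two, has small enough derivatives that the per-swap error is $o(n^{-2-c})$, so that summing over $n^2$ swaps yields a net error $O(n^{-c})$. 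Establishing that the individual entries of the resolvent $(M_n M_n^\ast - z)^{-1}$, or equivalently the relevant ``Green's function'' coefficients, are small with high probability is exactly where one needs delocalization of eigenvectors at the hard edge, and this is the step I expect to be the main obstacle; it is likely handled by a self-consistent bootstrap that simultaneously controls $\sigma_n$ from below and the Green's function entries from above along the exchange path.

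Finally, a \emph{bootstrapping in the test function} step: having matched $\E\,\varphi(n\sigma_n(M_n(\a))^2)$ with $\E\,\varphi(n\sigma_n(M_n(\g))^2)$ up to $O(n^{-c})$ for all sufficiently smooth $\varphi$, one removes the smoothing. Here one uses the explicit gaussian formula \eqref{eqn:explicit1}--\eqref{eqn:explicit2}, which shows the limiting CDF has a bounded density away from $0$ and an integrable $x^{-1/2}$ singularity at $0$; combined with the lower-tail bound \eqref{jump}-type estimate near zero (which controls the probability that $n\sigma_n^2$ lies in a short interval), one converts the smoothed comparison into a genuine comparison of $\P(n\sigma_n(M_n(\a))^2 \le t)$ with $\P(n\sigma_n(M_n(\g))^2 \le t)$, losing only a further power of $n^{-c}$. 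Chaining this with Theorem \ref{edel-thm} and the identities \eqref{eqn:explicit1}, \eqref{eqn:explicit2} gives \eqref{mna} and its complex counterpart, uniformly in $t$ with the implied constant depending only on $\E|\a|^{C_0}$.
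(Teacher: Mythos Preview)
Your proposal takes a genuinely different route from the paper, and as stated it contains a real gap.

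The paper does \emph{not} do an entry-by-entry Lindeberg swap on $M_n$. The ``property testing'' heuristic in the abstract means something else: one passes to the inverse, randomly samples $s=\lfloor n^{500/C_0}\rfloor$ of the rows of $M_n^{-1}$ to form a short fat matrix $B$, and proves $\sigma_1(M_n^{-1})\approx\sqrt{n/s}\,\sigma_1(B)$ by a second-moment computation (Lemma~\ref{rs0}), once the rows of $M_n^{-1}$ are shown to be $O(n^{o(1)})$ (Proposition~\ref{tailbound0}). A linear-algebra identity (Lemma~\ref{proj0}) then rewrites $\sigma_1(B)^{-1}$ as $\sigma_s(M_{s,n})$, where $M_{s,n}$ is the $s\times s$ matrix whose columns are the orthogonal projections of $X_1,\dots,X_s$ onto the $s$-dimensional space $V$ orthogonal to $X_{s+1},\dots,X_n$. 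Each entry of $M_{s,n}$ is thus an inner product of length $n$ with an essentially flat vector (Proposition~\ref{v-nondeg0}), and since $s$ is only a tiny power of $n$, a vector-valued Berry--Ess\'een theorem (Proposition~\ref{berry-esseen-frame0}) makes $M_{s,n}(\a)$ close in distribution to the genuine Gaussian $M_s(\g_F)$. No resolvent identities, no Green's-function delocalization bootstrap, and no per-entry Taylor expansion are used; the central limit theorem, applied once after projection, is what transfers universality, and this is precisely why only two matched moments suffice.

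The gap in your approach is the claim that the per-swap error is $o(n^{-2-c})$ with only two moments matched. In the Lindeberg expansion the third-order term carries the factor $\E\a^3-\E\g_F^3$, which is generically nonzero (take any asymmetric $\a$), so it does not cancel; you would need the \emph{third} derivative of your smoothed hard-edge statistic in a single entry to be $o(n^{-2})$. But $\partial_{a_{ij}}\sigma_n=u_iv_j$ for the bottom singular vectors, and each further derivative brings in resolvent denominators $(\sigma_n^2-\sigma_k^2)^{-1}$; at the hard edge the nearest-neighbor gap is of order $1/n$, so higher derivatives lose full powers of $n$, and delocalization alone cannot compensate. This is exactly why the later four-moment theorems for local eigenvalue statistics require matching \emph{four} moments, not two. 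With two moments matched, the $n^2$ swap errors do not sum to $o(1)$, and your scheme does not close. The paper's sampling/projection trick sidesteps this entirely: it never differentiates in a single entry, but instead aggregates $n$ entries at once so that the CLT, rather than higher-moment matching, does the job.
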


%{\bf To Phil: Can you have a picture here; to be exact, a pair for Ber and Gau}
Figure~\ref{fig1BerGau} shows an empirical demonstration of the theorem above
for Bernoulli and for gaussian distributions.

\begin{figure}
\begin{center}
\scalebox{.5}{\includegraphics{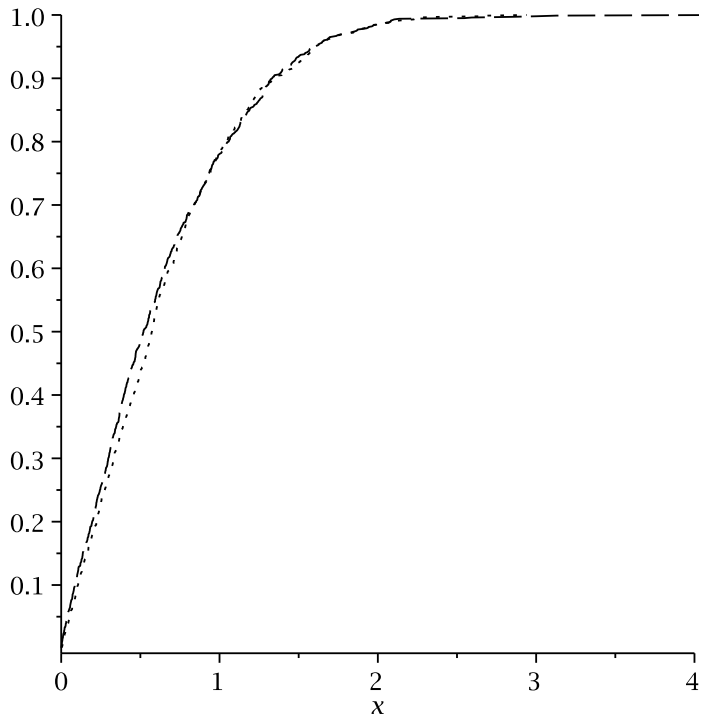}}
\end{center}
\caption{Plotted above is the curve $\Pr( \sqrt{n} \sigma_n(M_n(\xi)) \le x
)$, based on data from 1000 randomly generated matrices with $n=100$. The
dotted curve was generated with $\xi$ a random Bernoulli variable, taking the
values $+1$ and $-1$ each with probability $1/2$; and the dashed curve was
generated with $\xi$ a gaussian normal random variable.  Note that the two
curves are already close together in spite of the relatively coarse data.} 
\label{fig1BerGau}

\end{figure}

 Very roughly speaking, we will  show that one can swap $\a$ with the appropriate
gaussian distribution $\g_\R$ or $\g_\C$, at which point one can basically apply Theorem \ref{edel-thm} as a black box.
In other words, we show that the law of  $ n \sigma_n( M_n( \a ) )^2$ is {\it universal} with respect to the choice of
 $\a$ by a {\it direct comparison } to the gaussian models. The exact formulae
 $ \int_0^t \frac{1+\sqrt{x}}{2\sqrt{x}} e^{-(x/2 + \sqrt{x})}\ dx $  and
$\int_0^t e^{-x}\ dx $ do not play any important role.  This
direct comparison (or coupling) approach is in the spirit of
Lindeberg's proof \cite{lindeberg} of the central limit theorem,
 and was also recently applied 
 in our proof of the circular law \cite{tv-universal}.

Our arguments are completely effective, and give an explicit value for $C_0$; for instance, $C_0 := 10^4$
certainly suffices.  Clearly, one should be able to lower
$C_0$ significantly (we have made no attempt to optimize in $C_0$, in order to simplify the exposition),
 but we will not explore this issue here.

Theorem \ref{main-thm} can be extended in several directions, with simple modifications of the proof.  For example, we can prove a  similar universality result involving the joint distribution of the bottom
$k$ singular values of $M_n(\a)$, for bounded $k$ (and even some results when $k$ is a small power of $n$). Next, we can also consider  rectangular
 matrixes where the difference between the two dimensions is not
 too large. Finally, all results hold if we drop the condition that the entries have identical distribution. 
 (It is important that they are all normalized, independent and their $C_{0}$-moments are uniformly bounded.)  For precise statements, see Section \ref{section:extension}.

It is clear that one can use   Theorem \ref{main-thm} to
address  Conjecture \ref{conj:ST}. By Theorem \ref{main-thm} and \eqref{eqn:explicit1}, the
left hand side  of \eqref{eqn:STconj} is

\begin{align*}  \P( n  \sigma_n( M_n( \a ) ) ^{2 } \leq t ^{2})
&= \int_0^{t^{2} } \frac{1+\sqrt{x}}{2\sqrt{x}} e^{-(x/2 + \sqrt{x})}\ dx  + O(n^{-c} ) \\
&= 1- e^{-t^{2}/2 -t} + O(n^{-c}) . \end{align*}

Since $1- e^{-t^{2}/2-t } < t$ for any  $t >0$, we conclude that Conjecture \ref{conj:ST} holds for 
any $t > n^{-c_{0} }$, for some positive constant $c_{0}$. More importantly, it shows that 
the main term $t$ on the right hand side of \eqref{eqn:STconj} is only a  (first order)  approximation of the truth and could certainly be improved. For example, for sufficiently small $t$, Taylor expansion gives

$$ 1- e^{-t^{2}/2-t} \approx  t - \frac{1}{3} t^{3 } . $$

Figure~\ref{fig2Taylor} provides empirical evidence that $ \P( \sqrt{n}
\sigma_n( M_n( \a ) ) \leq t ) < t$ for the Bernoulli and gaussian cases.

\begin{figure}
\begin{center}
\scalebox{.5}{\includegraphics{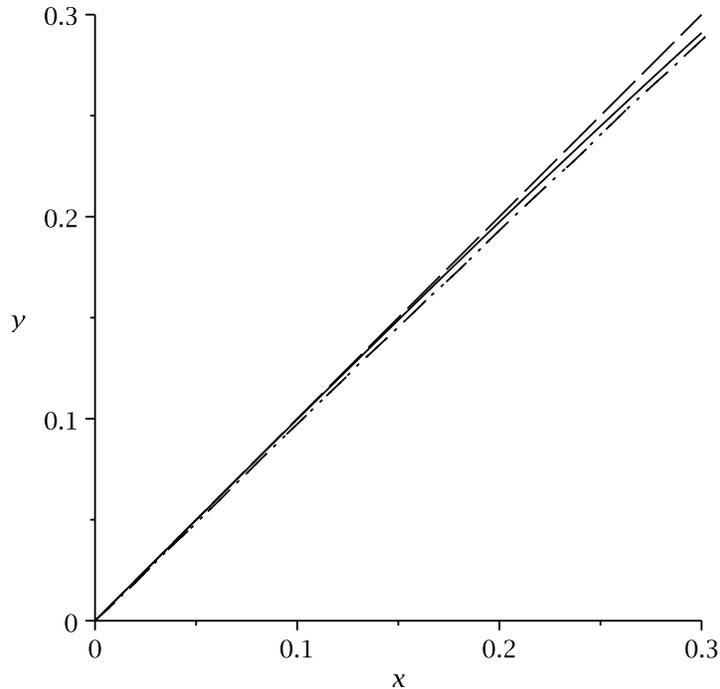}}
\end{center}
\caption{Plotted above is the curve $\Pr( \sqrt{n} \sigma_n(M_n(\xi)) \le x
)$, based on data from 150,000 randomly generated matrices with $n=100$. The
curve with long dashes is the line $y=x$, and the solid curve is a plot of
$y=x- x^3/3$.  The dotted curve was generated with $\xi$ a random Bernoulli
variable, taking the values $+1$ and $-1$ each with probability $1/2$; and the
dashed curve with spaces between the dashes was generated with $\xi$ a
gaussian normal random variable.  Note that the dotted curve and the
dashed-with-spaces curve are completely overlapping, so that it appears to be
a single cuver composed of dashes with dots inbetween.  On the right side of
the graph, the overlapping curves from the least singular values are
distinctly lower than the line $y=x$ and are very close to the curve
$y=x-x^3/3$.} 
\label{fig2Taylor}
\end{figure}
  
%{\bf To Phil: Can you have a picture here which enlarge the section near zero of the Bernoulli case to show the quadratic effect. It is good to compare with the line $x=y$.  Btw, my calculation gives $t^{2}$, do you have $t^{2} /2 $ in the Taylor expansion ?} 

  The rest of the paper is organized as follows. In the next
  section, we describe our proof strategy. In Section
  \ref{overview}, we turn this high-level strategy into a
  rigorous proof, using many technical lemmas from various areas
  of mathematics (linear algebra, theoretical computer science,
  probability and high dimensional geometry). Many of these lemmas
  may have some independent interest. For instance, Corollary \ref{cor:gaussdistance} shows that  the distance from
a random vector (in $\C^n$) to a hyperplane spanned by $n-1$ other
random vectors has (asymptotically) gaussian distribution. This is
obvious in the case when the coordinates of the vector in question are iid
gaussian, but is not so  when they are Bernoulli.  (See also \cite{rv0} for some related results in this spirit.)

{\bf Notation.} We consider $n$ as an asymptotic parameter tending
to infinity. We use $X \ll Y$, $Y \gg X$, $Y = \Omega(X)$, or $X =
O(Y)$ to denote the bound $X \leq CY$ for all sufficiently large
$n$ and for some $C$ which can depend on fixed parameters (such as
$C_0$ or $\E |\a|^{C_0}$) but is independent of $n$.

The \emph{Frobenius norm} $\|A\|_F$ of a matrix is defined as
$\|A\|_F = \tr( A A^*)^{1/2}$.  Note that this bounds the operator
norm $\|A\|_{op} := \sup \{ |Ax|: |x|=1\}$ of the same matrix.

For a random variable $X$, $\E (X)$ is the expectation of $X$. If $X$ is real, we denote by $M(X)$ its median, namely a number $x$ such that both $\P(X \ge x)$ and $\P(X \le x)$ are at least $1/2$. (If 
$x$ is not unique, choose one arbitrarily.) 

For an event $\CE$, $\BI_{\CE}$ is its indicator function, taking value 1 if $\CE$ holds and 0 otherwise. 
Clearly $\E (\BI_{\CE})= \P(\CE)$.

\section{The main idea and the proof strategy}\label{strategy}

We now discuss our main idea and the strategy behind Theorem \ref{main-thm}.
A formal version of this argument is given in Section \ref{overview}, though for various minor
technical reasons, the presentation there will be rearranged slightly from the one given here.

Let us first reveal our main idea. To start, we are going to view 
$\sigma_{n} (M_{n } (\a) )$ as the (reciprocal of the) largest singular value of $M_{n}^{-1} (\a)$.  One of the most popular methods to study the largest singular value is the moment 
method, which enables one to  control on $\sigma_{1} (M)$ (of a random matrix $M$) if  one can have good 
 estimates on $\trace (MM^{\ast})^{k/2} $ for very large $k$.
The moment method was used successfully  by Sosnhikov \cite{soshnikov}  to study $\sigma_{1} (M_{n} (\a))$.  However, it is important in the applications of this method that  the entries of $M$ are independent and their distributions well-understood. Unfortunately, 
the entries of $M_{n} ^{-1} $ are highly correlated and not much is known about their distributions.

 We have found a new approach, motivated by  the general idea of ``property testing'', a  topics popular in theoretical computer science and combinatorics. The general setting
 of a property testing problem is as follows. Given a large, complex, structure $S$, we would like to study some parameter 
 $P$ of $S$. It has been observed that quite often  one can obtain  
  good estimates  about $P$ by just looking at the small  substructure of $S$, sampled randomly. In our situation, the large structure is the matrix $S:= M_{n}^{-1} $, and the parameter in question is its largest singular value.  It has turned out that  this largest singular value can be estimated quite precisely (with high probability)  by sampling a few rows (say $s$)  from $S$ and considering the submatrix 
 $S'$ formed by these rows.  The heart of the proof then consists of two observations: (1) The singular values of $S'$ can be computed from a matrix obtained by 
 projecting the rows of $S^{-1} = M_{n} $ onto a subspace of dimension $s$ and (2) Such a projection has a central limit theorem effect.  All these together allow us to compare the least singular value of $M_{n}(\a)$ with the least singular value 
 of $M_{s} (\g)$ (properly normalized), proving the universality.

Let us now be a little more specific. 
For sake of discussion we discuss the $\R$-normalized case
\eqref{mna}. Our goal is to show
\begin{equation}\label{snn}
 \sigma_n(M_n(\a)) \approx \sigma_n(M_n(\g_\R))
\end{equation}
where we will be deliberately vague\footnote{Roughly speaking, $X \approx Y$
means that the distributions of the random variables $X$ and $Y$ are close in some appropriately
 normalized L\'evy distance, where the appropriate normalization may change from line to line.}
 as to what the symbol $\approx$ means in this non-rigorous discussion.  The $\C$-normalized case will of course be very similar.

 We have 
\begin{equation}\label{manic}
 \sigma_n(A) = \sigma_1(A^{-1})^{-1}.
\end{equation}
(From existing results, e.g. \cite{kks, tv-lsv}, it is known that
$M_n(\a)$ is invertible with very high probability.)  We now wish
to show that
$$ \sigma_1(M_n(\a)^{-1}) \approx \sigma_1(M_n(\g_\R)^{-1}).$$
Let $R_1(\a), \ldots, R_n(\a)$ denote the rows of $M_n(\a)^{-1}$, and let $s := \lfloor n^\varepsilon \rfloor$ for some small absolute constant $\varepsilon > 0$.
 To estimate the largest singular value of the $n \times n$ matrix formed by the $n$ rows
 $R_1(\a), \ldots, R_n(\a) \in \R^n$, we use random sampling.  More precisely, we create the
 $s \times n$ submatrix $B_{s,n}(\a)$ formed by selecting $s$ of these rows at random.
 Actually, since the joint distribution of $R_1(\a),\ldots,R_n(\a)$ is easily seen to be invariant
 under relabeling of the indices, we may just take $B_{s,n}(\a)$ to be the matrix with rows $R_1(\a),\ldots,R_s(\a)$.
  An application of the second moment method (see Lemma \ref{rs0} and its proof) will give us the relationship
\begin{equation}\label{sigman}
 \sigma_1(M_n(\a)^{-1}) \approx \sqrt{\frac{n}{s}} \sigma_1( B_{s,n}(\a) )
\end{equation}
provided that we have some reasonable bound on the magnitude of the rows $R_1(\a),\ldots,R_n(\a)$
(see Proposition \ref{tailbound0}); of course we expect the same statement to be true with $\a$ replaced by $\g_\R$.
  Assuming it for now, we are now (morally) reduced to establishing a relationship of the form
$$
\sigma_1( B_{s,n}(\a) ) \approx \sigma_1( B_{s,n}(\g_\R) ).$$

The next step is to use some elementary linear algebra to replace the $s \times n$ matrix $B_{s,n}(\a)$
with an $s \times s$ matrix $M_{s,n}(\a)$, defined as follows.
Let $X_1(\a), \ldots, X_n(\a) \in \R^n$ be the columns of $M_n(\a)$; observe that these iid random
variables are the dual basis of $R_1(\a),\ldots,R_n(\a)$, thus $R_i(\a) \cdot X_j(\a) = \delta_{ij}$ for $1 \leq i,j \leq n$,
where $\delta_{ij}$ is the Kronecker delta.

Let $V_{s,n}(\a)$ be the $s$-dimensional subspace of $\R^n$ defined as the orthogonal complement of the
$n-s$-dimensional space spanned by the columns $X_{s+1}(\a),\ldots,X_n(\a)$.  We select an orthonormal basis on $V_{s,n}(\a)$ arbitrarily (e.g. uniformly at random, and independently of the columns $X_1(\a),\ldots,X_s(\a)$), thus identifying $V_{s,n}(\a)$ with the standard $s$-dimensional space $\R^s$.  The orthogonal projection from $\R^n$ to $V_{s,n}(\a)$ can now be thought of as a partial isometry $\pi: \R^n \to \R^s$.  Let $M_{s,n}(\a)$ be the $s \times s$ matrix whose columns are $\pi(X_1(\a)), \ldots, \pi(X_s(\a))$.  In Lemma \ref{proj0} we will establish the simple identity
$$ \sigma_1( M_{s,n}(\a) ) = \sigma_s( M_{s,n}(\a) )^{-1},$$
thus reducing our task to that of showing that
\begin{equation}\label{sos}
 \sigma_s( M_{s,n}(\a) ) \approx \sigma_s( M_{s,n}(\g_\R) ).
\end{equation}

The relation \eqref{sos} looks very similar to our original
relation \eqref{snn} (indeed, when $s=n$, \eqref{sos} collapses
back to \eqref{snn}).  However, the critical gain here is that
\eqref{sos} becomes much easier to prove than \eqref{snn} when $s$
is only a small power of $n$, because the projection $\pi$ will
act to average out the random variable $\a$ into (approximately) a
gaussian variable (essentially thanks to the central limit
theorem). Indeed, by using a variant of the Berry-Ess\'een central
limit theorem (Proposition \ref{berry-esseen-frame0}), together
with some non-degeneracy (or ``delocalization'') properties of
$V_{s,n}(\a)$ (Proposition \ref{v-nondeg0}), we will be able to
establish a relation of the form
$$ M_{s,n}(\a) \approx M_s(g_\R)$$
and similarly
$$ M_{s,n}(\g_\R) \approx M_s(g_\R)$$
(indeed, it is not hard to see that $M_{s,n}(\g_\R)$ and $M_s(g_\R)$ in fact have an identical distribution).
The claim \eqref{sos} will then follow from the Lipschitz properties of $\sigma_s$, which is a consequence of  the Hoefmann-Weilandt theorem (see Lemma \ref{hw}).

%Perhaps surprisingly, Littlewood-Offord theorems play no role in our arguments, despite
%their prominent appearance in other places in the literature,
%see e.g. \cite{rv0}, \cite{tv-condition}, \cite{tv-condition2}, \cite{tv-circular},
%\cite{tv-lsv}.  Instead, we rely primarily on Berry-Ess\'een central limit theorems
%to take their place.  (The fact that one can substitute the latter technique for the former was also %observed in \cite{rv0}.)

\section{The rigorous proof}\label{overview}

We now implement the strategy sketched out in Section
\ref{strategy} to give a rigorous  proof of Theorem
\ref{main-thm}. This proof will rely on several key propositions
which are proven in later sections or in the appendices.

Let $F$  be either the real field $\R$ or the complex field $\C$,
and fix an $F$-normalized random variable $\a$.
  We assume $C_0$ to be a sufficiently large constant to be chosen later (e.g. $C_0 = 10^4$ certainly suffices).
   All implied constants are allowed to depend on $C_0$ and $\E |\a|^{C_0}$. In all arguments, we assume  $n$ to be large depending on these parameters.  Fix $t > 0$, and write $f(x) := \frac{1+\sqrt{x}}{2\sqrt{x}} e^{-(x/2 + \sqrt{x})}$
   if $F = \R$ or $f(x) := e^{-x}$ if $F = \C$. Our task is to show that
\begin{equation}\label{jar}
 \P( n \sigma_n( M_n( \a ) )^2 \leq t ) = \int_0^t f(x)\ dx + O(n^{-c}).
\end{equation}

We first make a simple reduction.
By hypothesis, $\E |\a|^{C_0} = O(1)$.  Hence by Markov's inequality,
we see that $\P( |\a| \leq n^{10/C_0} ) = O( n^{-10} )$.
Thus, by the union bound, we see that with probability at least
$1 - O( n^{-8} )$, all coefficients of $M_n(\a)$ are $O( n^{10/C_0} )$.
 If we remove the tail event $|\a| \geq n^{10/C_0}$ from $\a$, and readjust $\a$ slightly to
 restore the $F$-normalization conditions (using \eqref{lips} to absorb the error,
 and using the continuity of $f$), we may thus reduce to the case when
\begin{equation}\label{atoy}
|\a| \le n^{10/C_0},
\end{equation}
with probability one. This is a standard truncation and
re-normalization process, used frequently in random matrix
literature (see, for instance, \cite{BS}). We omit the (routine,
but somewhat tedious) details.

For minor technical reasons, it is also convenient to assume $\a$
to be a continuous random variable (in particular, this implies that $M_n(\a)$ is invertible with probability one).
However, we emphasise that the bounds in our arguments do not explicitly depend on the continuity properties of $\a$, and instead depend only on the $C_0$ moment of $\a$ for any fixed $n$.  Since one can express any discrete random variable with finite $C_0$ moment (and obeying \eqref{atoy}) as the limit of a sequence of continuous random variables with uniformly bounded $C_0$ moment (and also obeying \eqref{atoy}), we see that the discrete case of the theorem can be recovered from the continuous one by a standard limiting argument (keeping $n$ fixed during this process, and using \eqref{lips} as necessary).

Applying \eqref{manic}, we have
$$  \P( n \sigma_n( M_n( \a ) )^2 \leq t ) = \P( \sigma_1( M_n( \a )^{-1} )^2 \geq n/t ).$$

Let $R_1(\a), \ldots, R_n(\a)$ denote the rows of $M_n(\a)^{-1}$.
Since the columns $X_1(\a),\ldots,X_n(\a)$ of $M_n(\a)$ are exchangeable (i.e. exchanging any two columns of $M_n(\a)$
does not affect the distribution), we see that $M_n(\a)^{-1}$ is row-exchangeable.

Our first step is motivated by the observation that in certain
cases the largest singular values of a matrix can be well
approximated by sampling. This fact  is well-known in theoretical
computer science and numerical analysis. In particular, the lemma below is a
special case of more general results from \cite{Frieze, DK}.

\begin{lemma}[Random sampling]\label{rs0}  Let $1 \leq s \leq n$ be integers. $A$ be an $n \times n$ real
or complex matrix with rows $R_1,\ldots,R_n$.  Let $k_1,\ldots,k_s \in \{1,\ldots,n\}$
be selected independently and uniformly at random, and let $B$ be the $s \times n$ matrix
with rows $R_{k_1},\ldots,R_{k_s}$.  Then
$$ \E \| A^* A - \frac{n}{s} B^* B \|_F^2 \leq \frac{n}{s} \sum_{k=1}^n |R_k|^4.$$
\end{lemma}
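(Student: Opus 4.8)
The plan is to prove Lemma~\ref{rs0} by a direct second-moment computation, exploiting the fact that $\frac{n}{s}B^*B$ is an unbiased estimator of $A^*A$ built from i.i.d.\ samples. First I would write everything entry-wise. Since $B$ has rows $R_{k_1},\dots,R_{k_s}$, we have $B^*B = \sum_{j=1}^s R_{k_j}^* R_{k_j}$ (viewing each row $R_k$ as a $1\times n$ vector, so $R_k^* R_k$ is the rank-one $n\times n$ matrix with $(a,b)$ entry $\overline{(R_k)_a}(R_k)_b$). Meanwhile $A^*A = \sum_{k=1}^n R_k^* R_k$. Introduce the i.i.d.\ random matrices $Y_j := \frac{n}{s} R_{k_j}^* R_{k_j}$ for $j=1,\dots,s$, so that $\frac{n}{s}B^*B = \sum_{j=1}^s Y_j$. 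Because $k_j$ is uniform on $\{1,\dots,n\}$, $\E Y_j = \frac{n}{s}\cdot\frac1n\sum_{k=1}^n R_k^* R_k = \frac1s A^*A$, hence $\E\big(\frac{n}{s}B^*B\big) = A^*A$ and the quantity we must bound is $\E\|\sum_{j=1}^s (Y_j - \E Y_j)\|_F^2$, i.e.\ the Frobenius-norm variance of a sum of i.i.d.\ centered matrices.

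The key step is then the orthogonality of independent centered summands in the Frobenius inner product: since the $Y_j$ are i.i.d., $\E\langle Y_i - \E Y_i, Y_j - \E Y_j\rangle_F = 0$ for $i\neq j$, so
\begin{equation*}
\E\Big\| \tfrac{n}{s}B^*B - A^*A \Big\|_F^2 = \sum_{j=1}^s \E\|Y_j - \E Y_j\|_F^2 = s\big(\E\|Y_1\|_F^2 - \|\E Y_1\|_F^2\big) \leq s\, \E\|Y_1\|_F^2.
\end{equation*}
Now $\|Y_1\|_F^2 = \frac{n^2}{s^2}\|R_{k_1}^* R_{k_1}\|_F^2 = \frac{n^2}{s^2}|R_{k_1}|^4$, because $R_k^* R_k$ is rank one with $\|R_k^* R_k\|_F = |R_k|^2$ (its only nonzero singular value is $|R_k|^2$). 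Taking expectation over the uniform choice of $k_1$ gives $\E\|Y_1\|_F^2 = \frac{n^2}{s^2}\cdot\frac1n\sum_{k=1}^n |R_k|^4 = \frac{n}{s^2}\sum_{k=1}^n |R_k|^4$. Multiplying by $s$ yields exactly $\frac{n}{s}\sum_{k=1}^n |R_k|^4$, as claimed.

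I do not expect a serious obstacle here; the only points requiring a little care are purely bookkeeping: (i) getting the Hermitian-transpose conventions right in the complex case so that $\|R_k^* R_k\|_F = |R_k|^2$ genuinely holds (this is the identity $\|uv^*\|_F = |u||v|$ for vectors, specialized to $u=v=R_k^*$), and (ii) confirming that dropping the $-\|\E Y_1\|_F^2$ term is legitimate (it is, since it only helps the inequality, and in fact one could keep it to get a marginally sharper bound, but the stated form suffices). The cross-term vanishing is the one conceptual ingredient, and it is immediate from independence plus the summands being centered. So the proof is essentially: expand into rank-one pieces, recognize an i.i.d.\ unbiased-estimator structure, apply Frobenius-Pythagoras, and evaluate the single-term second moment.
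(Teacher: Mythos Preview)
Your proof is correct and is essentially the same second-moment computation as the paper's, just organized at the matrix level rather than entry by entry: the paper computes, for each $(i,j)$, that the $(i,j)$ entry of $A^*A-\tfrac{n}{s}B^*B$ has mean zero and variance $\tfrac{n^2}{s}V_{ij}$, then sums and discards the negative part of $V_{ij}$, which is exactly your Frobenius-Pythagoras step followed by dropping $-\|\E Y_1\|_F^2$. Your rank-one packaging is a clean way to present it, but there is no substantive difference in the argument.
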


%\begin{corollary}[Sampling of row-exchangeable matrices]\label{srm0}  Let $s, n$ be integers with $1 %\leq s \leq \sqrt{n}$,
%let $A$ be a row-exchangeable random $n \times n$ matrix with rows $R_1,\ldots,R_n$, and let $B$ %be the $s \times n$ matrix with
%rows $R_1,\ldots,R_s$.  Then
%$$ \E \sum_{i=1}^n (\sigma_i(A)^2 - \frac{n}{s} \sigma_i(B)^2)^2 \ll \frac{n^2}{s} \E (\max_{1 \leq k \leq n} |R_k|)^4.$$
%\end{corollary}

The proof of Lemma \ref{rs0} is 
presented in Appendix \ref{sampling-sec}.

In order to apply Lemma \ref{rs0}, we need to bound the right hand side. This is done in the following 
proposition.

\begin{proposition}[Tail bound on $|R_i(\a)|$]\label{tailbound0}  Let $R_{1}, \dots, R_{n}$ be the rows of $M_{n} (\a)^{-1}$. Then
$$ \P( \max_{1 \leq i \leq n} |R_i(\a)| \geq n^{100/C_0} ) \ll n^{-1/C_0}.$$
\end{proposition}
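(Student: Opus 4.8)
\textit{The plan.} The key point is that $|R_i(\a)|$ is controlled by the reciprocal of the distance from the column $X_i(\a)$ of $M_n(\a)$ to the hyperplane $H_i$ spanned by the other $n-1$ columns. Indeed, since the $R_i(\a)$ are dual to the columns $X_j(\a)$ (i.e. $R_i(\a)\cdot X_j(\a)=\delta_{ij}$), the vector $R_i(\a)$ is orthogonal to $H_i$ and has length exactly $1/\dist(X_i(\a),H_i)$. By row-exchangeability (equivalently, the column-exchangeability of $M_n(\a)$) and the union bound, it therefore suffices to prove the one-row tail estimate
\begin{equation*}
 \P\bigl( \dist(X_1(\a), H_1) \le n^{-100/C_0} \bigr) \ll n^{-1-1/C_0},
\end{equation*}
where $H_1 = \Span(X_2(\a),\ldots,X_n(\a))$.

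\textit{Main steps.} First I would record the distance identity $|R_i(\a)| = \dist(X_i(\a),H_i)^{-1}$ from the duality relations, and reduce by the union bound and exchangeability to the $i=1$ case. Second, condition on the columns $X_2(\a),\ldots,X_n(\a)$, so that $H_1$ becomes a fixed (generically $(n-1)$-dimensional) subspace with unit normal vector $v = (v_1,\ldots,v_n)$; then $\dist(X_1(\a),H_1) = |X_1(\a)\cdot v| = |\sum_{j=1}^n v_j \a_j|$ where $\a_1,\ldots,\a_n$ are iid copies of $\a$, independent of $v$. Third, invoke a small-ball / anti-concentration estimate for such random sums: the probability that $|\sum_j v_j \a_j|$ is smaller than $\delta$ is $O(\delta^{1/2})$ or better, uniformly in unit vectors $v$, using that $\a$ has variance one (a second-moment argument, or Esseen's inequality, or the results of Rudelson--Vershynin cited in the excerpt, already suffice at this crude level). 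Taking $\delta = n^{-100/C_0}$ gives a bound of $O(n^{-50/C_0})$ for the conditional probability; removing the conditioning and summing over the $n$ rows yields $O(n^{1-50/C_0}) = O(n^{-1/C_0})$ once $C_0$ is large enough (e.g. $C_0 = 10^4$ gives exponent $1 - 50/C_0 < -1/C_0$ with huge room to spare). One has to also handle the (negligible) event that $X_2(\a),\ldots,X_n(\a)$ fail to span a hyperplane, which has probability zero under the continuity reduction made earlier in the excerpt.

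\textit{The main obstacle.} The only genuine content is the uniform anti-concentration bound for $|\sum_j v_j \a_j|$ over all unit vectors $v$; a naive bound using only $\Var(\a_j\,v_j) $ controls the spread but not the concentration near zero, so one needs a genuine small-ball input. However, because the target exponent $100/C_0$ is so small, even the crudest available tool works: one can split off a single large coordinate $v_{j_0}$ (with $|v_{j_0}| \ge n^{-1/2}$), condition on all other $\a_j$, and use the one-dimensional anti-concentration of $\a$ (which follows from $\Var(\a)=1$ via, say, the Paley--Zygmund or Esseen estimate) to get that $|v_{j_0}\a_{j_0} + (\text{const})|$ lands in any interval of length $\delta$ with probability $O(\sqrt{n}\,\delta/ (\text{spread of }\a) ) + o(1)$; this is more than enough. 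So the obstacle is really just a matter of citing or reproving the right elementary lemma, not a structural difficulty.
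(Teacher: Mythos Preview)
There is a genuine gap, and it is arithmetical rather than technical. You claim that with $\delta = n^{-100/C_0}$ and a single-row bound of $O(\delta^{1/2}) = O(n^{-50/C_0})$, summing over the $n$ rows gives $O(n^{1-50/C_0}) = O(n^{-1/C_0})$ ``once $C_0$ is large enough''. But the inequality $1 - 50/C_0 \le -1/C_0$ is equivalent to $C_0 \le 49$, so it holds only for \emph{small} $C_0$, not large. For $C_0 = 10^4$ the exponent $1 - 50/C_0 = 0.995$ is positive and the union bound blows up badly. The point is that $100/C_0$ is a \emph{tiny} exponent; the threshold $\delta = n^{-100/C_0}$ is very close to $1$, so no anti-concentration estimate on a single row can beat the factor of $n$ from the union bound. (Incidentally, the uniform bound $\P(|\sum_j v_j \a_j| \le \delta) = O(\delta^{1/2})$ over \emph{all} unit vectors $v$ is also false: for Bernoulli $\a$ and $v = (1,1,0,\ldots,0)/\sqrt 2$ the sum vanishes with probability $1/2$. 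One needs structural information about $v$, which the paper supplies via Proposition~\ref{v-nondeg0}, and even then the resulting bound carries an additive Berry--Ess\'een error $O(n^{-c/4})$ that again defeats a naive union bound.)

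The paper circumvents this as follows. From the delocalization of the normal vector (Proposition~\ref{v-nondeg0}) and Berry--Ess\'een one gets only $\P(d_i \le t) \ll t + n^{-c/4}$ for a fixed absolute constant $c$. One then picks a small index set of size $L = \lfloor n^{20/C_0} \rfloor$ and applies the union bound just to those $L$ distances with $t = n^{-21/C_0}$; since $L$ is a tiny power of $n$ this does give $\P(\min_{i \le L} d_i \le n^{-21/C_0}) \ll n^{-1/C_0}$. The remaining $n - L$ distances are then controlled not by anti-concentration but by a deterministic \emph{correlation inequality} (Lemma~\ref{Relation}),
\[
 d_j \ge \frac{|\pi_{L,j}(X_j)|}{1 + \sum_{i=1}^L |\pi_{L,j}(X_i)|/d_i},
\]
together with concentration of the projection lengths $|\pi_{L,j}(X_i)|$ (Lemma~\ref{corup0}). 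This bootstraps the lower bound on $d_1,\ldots,d_L$ to a lower bound on all $d_j$ simultaneously, at the cost of a harmless loss in the exponent. Your approach is missing this second, non-union-bound, mechanism entirely.
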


We will prove this important proposition in Section
\ref{tailbound-sec}. 

To continue, let $\CE_{1}$ denote the event

\begin{equation}\label{mori}
\max_{1 \leq i \leq n} |R_i(\a)| \leq n^{100/C_0}.
\end{equation}

By Proposition \ref{tailbound0}, we have

\begin{equation} \label{eqn:E1} 
\P(\CE_{1} ) \ge 1 - O(n^{-1/C_{0}}).
\end{equation}

Set 
\begin{equation}\label{sdef}
s := \lfloor n^{500/C_0} \rfloor.
\end{equation}

Sample a matrix $A$ from the distribution $M_{n} (\a)$. Let $B$ be the submatrix formed by $s$ random rows of $A^{-1}$.  If the rows of $A^{-1}$ satisfies 
$\CE_{1}$, then by Lemma \ref{rs0} and the definition of $s$, we have 

$$ \E (| \sigma_1( A^{-1})^2 - \frac{n}{s} \sigma_1( B)^2 |^{2})   \ll n^{-100/C_0} n^2. $$

 By Markov's inequality, 
 
$$ \P( |(\sigma_1( A^{-1} )^2 - \frac{n}{s} \sigma_1( B)^2| \geq n^{-40/C_0} n )\ll n^{-1/C_0} .$$

(The expectation and  probability in the last two estimates are 
 with respect to the random choice of the rows; the matrix $A$ is fixed and satisfies 
$\CE_{1} $.)

Let $\CE_{2}$ be the event that 

$$  |\sigma_1( A^{-1} )^2 - \frac{n}{s} \sigma_1( B)^2| \le n^{-40/C_0} n . $$

Finally, let $\CE_{3}$ be the event that 

$$\frac{n}{s}
\sigma_1( B)^2 \le n (t^{-1} - n^{-40/C_0}) . $$

We are going to view both $\CE_{2}, \CE_{3} $ as events in the product space generated by $M_{n}(\a)$ and the random choice of the rows.  A simple calculation shows  that if $\CE_{1}, \CE_{2}, \CE_{3} $ hold, then 

$$  n \sigma_n( A )^2 \ge t . $$

It follows that 

$$  \P( n \sigma_n( M_n( \a ) )^2 \ge t ) \ge \P( \frac{n}{s}
\sigma_1( B  )^2 \le  n (t^{-1} - n^{-40/C_0}) ) - O( n^{-1/C_0} ).$$

Arguing similarly, we have

$$  \P( n \sigma_n( M_n( \a ) )^2 \le t ) \ge \P( \frac{n}{s}
\sigma_1( B)^2 \geq n (t^{-1} + n^{-40/C_0}) ) - O( n^{-1/C_0} ).$$

Our next tool is a linear algebraic lemma that connects the
submatrices of $A^{-1} $ to matrices  obtained by projecting the row vectors of $A$ onto a subspace.

\begin{lemma}[Projection lemma]\label{proj0}  Let $1 \leq s \leq n$ be integers,
let $F$ be the real or complex field, let $A$ be an $n \times n$ $F$-valued invertible
matrix with columns $X_1,\ldots,X_n$, and let $R_1,\ldots,R_n$ denote the rows of $A^{-1}$.
Let $B$ be the $s \times n$ matrix with rows $R_1,\ldots,R_s$.  Let $V$ be
the $s$-dimensional subspace of $F^n$ formed as the orthogonal complement of the span
of $X_{s+1},\ldots,X_n$, which we identify with $F^s$ via an orthonormal basis,
and let $\pi: F^n \to F^s$ be the orthogonal projection to $V \equiv F^s$.
 Let $M$ be the $s \times s$ matrix with columns $\pi(X_1),\ldots,\pi(X_s)$.  Then $M$ is invertible, and we have
$$ B B^* = M^{-1} (M^{-1})^*.$$
In particular, we have
$$ \sigma_j( B ) = \sigma_{s-j+1}( M )^{-1}$$
for all $1 \leq j \leq s$.
\end{lemma}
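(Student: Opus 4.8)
The plan is to exploit the duality between the columns $X_1,\ldots,X_n$ of $A$ and the rows $R_1,\ldots,R_n$ of $A^{-1}$, namely $R_i \cdot X_j = \delta_{ij}$. First I would observe what the projection $\pi$ does: since $V$ is the orthogonal complement of $\mathrm{span}(X_{s+1},\ldots,X_n)$, we have $\pi(X_j) = 0$ for $j > s$, so only $\pi(X_1),\ldots,\pi(X_s)$ carry information, and $M$ is the $s\times s$ matrix recording these. The key point is that restricted to $V$, the vectors $\pi(X_1),\ldots,\pi(X_s)$ are still a basis (hence $M$ invertible): indeed, if $\sum_j c_j \pi(X_j) = 0$ with $j$ ranging over $1,\ldots,s$, then $\sum_{j\le s} c_j X_j$ lies in $V^\perp = \mathrm{span}(X_{s+1},\ldots,X_n)$, and since $X_1,\ldots,X_n$ are linearly independent (as $A$ is invertible) this forces all $c_j = 0$.

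Next I would identify the dual basis of $\pi(X_1),\ldots,\pi(X_s)$ inside $V$. I claim it is $\pi(R_1),\ldots,\pi(R_s)$ — or more precisely, the right objects are the rows $R_1,\ldots,R_s$ themselves, viewed appropriately. The cleanest route: for $1\le i,j \le s$ we have $R_i \cdot X_j = \delta_{ij}$, and for $j > s$, $X_j \in V^\perp$ while — and this is the crucial structural fact — $R_i$ for $i \le s$ need not lie in $V$, but $\pi R_i \cdot X_j = R_i \cdot \pi X_j$ only after checking $\pi$ is self-adjoint, which it is as an orthogonal projection. Actually the slick argument is: the $s\times n$ matrix $B$ with rows $R_1,\ldots,R_s$ satisfies $B A = [\,I_s \mid 0\,]$ (the first $s$ rows of $A^{-1}A = I_n$), so $B X_j = e_j$ for $j \le s$ and $B X_j = 0$ for $j > s$. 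Writing $B = B\pi^* + B(\mathrm{id}-\pi^*)$ — hmm, rather: since $B X_j = 0$ for all $j > s$, each row $R_i$ is orthogonal to $X_{s+1},\ldots,X_n$, i.e. $R_i \in V$ already for $i \le s$. Therefore $B$ factors through $V$: identifying $V \equiv F^s$ via the chosen orthonormal basis, let $\tilde B$ be the $s\times s$ matrix of $B$ composed with the inclusion $F^s \equiv V \hookrightarrow F^n$ read in coordinates. Then $\tilde B \cdot M = \tilde B [\pi X_1 \mid \cdots \mid \pi X_s] $, and since $R_i \cdot X_j = \delta_{ij}$ with $R_i \in V$ and $\pi X_j$ the $V$-component of $X_j$, we get $\tilde B M = I_s$, so $\tilde B = M^{-1}$.

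Finally, because the identification $V \equiv F^s$ is via an orthonormal basis, it is an isometry, so $B B^* = \tilde B \tilde B^* = M^{-1}(M^{-1})^*$, which is the claimed identity. The singular value statement follows immediately: $\sigma_j(B)^2$ is the $j$-th largest eigenvalue of $BB^* = M^{-1}(M^{-1})^*$, whose eigenvalues are $\sigma_k(M^{-1})^2 = \sigma_k(M)^{-2}$; matching the $j$-th largest eigenvalue of $M^{-1}(M^{-1})^*$ with the $(s-j+1)$-th largest singular value of $M$ gives $\sigma_j(B) = \sigma_{s-j+1}(M)^{-1}$. The only genuinely delicate point — and the one I would write out most carefully — is the assertion that each row $R_i$ with $i \le s$ already lies in $V$ (equivalently, that $B X_j = 0$ for $j > s$, which is just reading off entries of $A^{-1} A = I_n$), since everything else is then bookkeeping about orthonormal changes of basis and the spectral correspondence between a matrix and its inverse.
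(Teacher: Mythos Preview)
Your proposal is correct and follows essentially the same route as the paper: both arguments hinge on the observation that $R_i \cdot X_j = \delta_{ij}$ forces $R_1,\ldots,R_s$ to already lie in $V$, so that (after the isometric identification $V \equiv F^s$) the rows of $B$ become precisely the rows of $M^{-1}$, whence $BB^* = M^{-1}(M^{-1})^*$. Your separate linear-independence check that $M$ is invertible is fine but redundant, since $\tilde B M = I_s$ already gives it.
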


We prove this lemma in Appendix \ref{section:algebra}.

For any fixed value of the rows $X_{s+1},\ldots,X_n$, we choose a projection $\pi: F^n \to F^s$ as above.  The exact choice of $\pi$ is not terribly important so long as it is made independently of the rows $X_1,\ldots,X_s$; for instance, one could pick $\pi$ uniformly at random with respect to the Haar measure on all such available projections, independently of $X_1,\ldots,X_s$.

Applying Lemma \ref{proj0} twice, and noticing that since the rows of $M_{n} (\a)$ have identical distribution,
we can rewrite the inequalities
in question as

$$  \P( n \sigma_n( M_n( \a ) )^2 \leq t ) \leq \P( s \sigma_s( M_{s,n}(\a) )^2 \leq t_+  ) + O( n^{-1/C_0} )$$
and
$$  \P( n \sigma_n( M_n( \a ) )^2 \leq t ) \geq \P( s \sigma_s( M_{s,n}(\a) )^2 \leq t_- ) - O( n^{-1/C_0} )$$
respectively, where
$$ t_+ := \frac{1}{ \max( t^{-1} - n^{-40/C_0}, 0 ) }$$
(with the convention that $t_+ = \infty$ if $t > n^{40/C_0}$) and
$$ t_- := \frac{1}{t^{-1} + n^{-40/C_0}}$$
and $M_{s,n}(\a)$ is the $s \times s$ matrix with rows $\pi(X_1(\a)), \ldots, \pi(X_s(\a))$, and $\pi: F^n \to F^s$ is the orthogonal projection to the $s$-dimensional space $V = V_{s,n}(\a)$ orthogonal to the columns $X_{s+1}(\a),\ldots,X_n(\a)$ of $M_n(\a)$, where we identify $V$ with $F^s$ via some orthonormal basis of $V$, chosen in some fashion independent of first $s$ columns $X_1(\a), \ldots, X_s(\a)$.

The final, and key, point in the proof is to show that the
distribution of $M_{s,n}(\a)$ is very close to that of $M_{s,n}
(\g)$. We are going to need the following high-dimensional
generalization of the classical Berry-Esseen central limit theorem, which we will prove in Appendix
\ref{section:be}.

\begin{proposition}[Berry-Ess\'een-type central limit theorem for frames]\label{berry-esseen-frame0}
 Let $1 \leq N \leq n$, let $F$ be the real or complex field,
 and let $\a$ be $F$-normalized and have finite third moment $\E |\a|^3 < \infty$.
  Let $v_1,\ldots,v_n \in F^N$ be a \emph{normalized tight frame} for $F^N$, or in other words
\begin{equation}\label{vivi}
v_1 v_1^* + \ldots + v_n v_n^* = I_N,
\end{equation}
where $I_N$ is the identity matrix on $F^N$.  Let $S \in F^N$
denote the random variable
$$ S = \a_1 v_1 + \ldots + \a_n v_n,$$
where $\a_1,\ldots,\a_n$ are iid copies of $\a$.  Similarly, let
$G := (\g_{F,1},\ldots,\g_{F,N}) \in F^N$ be formed from $N$ iid
copies of $\g_F$.  Then for any measurable set $\Omega \subset
F^N$ and any $\eps > 0$, one has

\begin{align*} 
 \P( G \in \Omega \backslash \partial_\eps \Omega ) - & O(N^{5/2} \eps^{-3} (\max_{1 \leq j \leq n} |v_j|))
\leq \P( S \in \Omega ) \\ & \leq \P( G \in \Omega \cup \partial_\eps
\Omega ) + O(N^{5/2} \eps^{-3} (\max_{1 \leq j \leq n} |v_j|)),\end{align*}
where
$$ \partial_\eps \Omega := \{ x \in F^N: \dist_\infty( x, \partial \Omega ) \leq \eps \},$$
$\partial \Omega$ is the topological boundary of $\Omega$, and and
$\dist_\infty$ is the distance using the $l^\infty$ metric on
$F^N$.  The implied constant depends on the third moment $\E
|\a|^3$ of $\a$.
\end{proposition}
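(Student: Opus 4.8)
The plan is to prove the bound first for smooth test functions, by a Lindeberg‑type swapping argument that uses only the fact that $\a$ and $\g_F$ agree in their first two (real) moments, and then to pass to indicator functions of sets by mollifying at scale $\eps$. As a first reduction, let $\g_1,\dots,\g_n$ be iid copies of $\g_F$ and set $\tilde G := \g_1 v_1 + \cdots + \g_n v_n \in F^N$. Then $\tilde G$ has the same distribution as $G$: both are centred Gaussian vectors on $F^N \cong \R^{dN}$ ($d=1$ in the real case, $d=2$ in the complex case), so it suffices to match covariance structures, and \eqref{vivi} together with $\E \g_F = 0$, $\E|\g_F|^2 = 1$ and (complex case) $\E \g_F^2 = 0$ gives $\E[\tilde G\tilde G^*] = \sum_j v_j v_j^* = I_N = \E[GG^*]$ and $\E[\tilde G\tilde G^T] = \sum_j \E[\g_j^2]\, v_j v_j^T = \E[GG^T]$. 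The identical computation, using that the $F$‑normalization of $\a$ forces $\E\a = 0$, $\E|\a|^2 = 1$, $\E\a^2 = \E\g_F^2$, shows $S = \a_1 v_1+\cdots+\a_n v_n$ has the same first and second moments as $G$; equivalently, regarded as random vectors in $F\cong\R^d$, $\a$ and $\g_F$ share the same mean (zero) and the same $d\times d$ covariance matrix. Thus it suffices to compare $S$ with $\tilde G$.

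For the swap, fix $\varphi \in C^3(F^N)$ with finite $\|D^3\varphi\|_{op}$ (operator norm of the third derivative as a symmetric trilinear form). For $1\le k\le n$ let $W_k := \g_1 v_1+\cdots+\g_{k-1}v_{k-1}+\a_{k+1}v_{k+1}+\cdots+\a_n v_n$, which is independent of both $\a_k$ and $\g_k$. Telescoping and Taylor‑expanding $\varphi$ to third order about each $W_k$,
$$ \E\varphi(S) - \E\varphi(\tilde G) = \sum_{k=1}^n \Big( \E\varphi(W_k+\a_k v_k) - \E\varphi(W_k+\g_k v_k) \Big), $$
and in each summand the zeroth‑order terms cancel, the first‑order terms cancel since $\E\a_k = \E\g_k = 0$, and the second‑order terms cancel because $\E[D^2\varphi(W_k)[\a_k v_k,\a_k v_k]] = \E[D^2\varphi(W_k)[\g_k v_k,\g_k v_k]]$ — for fixed $W_k$ this is a fixed real quadratic form in the $d$ real coordinates of the scalar multiplying $v_k$, and the moment agreement above makes its expectation the same for $\a_k$ and $\g_k$. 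Only the third‑order remainders survive, so
$$ \big|\E\varphi(S)-\E\varphi(\tilde G)\big| \le \tfrac16 \|D^3\varphi\|_{op}\big(\E|\a|^3+\E|\g_F|^3\big)\sum_{k=1}^n|v_k|^3 . $$
Since $|v_k|^3 \le (\max_j|v_j|)\,|v_k|^2$ and $\sum_k|v_k|^2 = \sum_k\tr(v_k v_k^*) = \tr I_N = N$ by \eqref{vivi}, we obtain
\begin{equation}\label{be-smooth}
\big|\E\varphi(S)-\E\varphi(\tilde G)\big| \ll \|D^3\varphi\|_{op}\,N\max_{1\le j\le n}|v_j|,
\end{equation}
with implied constant depending only on $\E|\a|^3$.

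Now the mollification. Given $\Omega$ and $\eps>0$, let $\rho$ be a smooth probability density on $F^N$ built as a tensor product of an $\eps/2$‑rescaled fixed smooth bump on $\R$, so $\rho$ is supported in the $l^\infty$‑ball of radius $\eps/2$. Put $\varphi_+ := \mathbf{1}_{\Omega_+}*\rho$, $\varphi_- := \mathbf{1}_{\Omega_-}*\rho$ with $\Omega_+ := \{x:\dist_\infty(x,\Omega)<\eps/2\}$ and $\Omega_- := \{x:\dist_\infty(x,\Omega^c)>\eps/2\}$; a short check with the $l^\infty$ geometry gives $\mathbf{1}_{\Omega\setminus\partial_\eps\Omega} \le \varphi_- \le \mathbf{1}_\Omega \le \varphi_+ \le \mathbf{1}_{\Omega\cup\partial_\eps\Omega}$. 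These $\varphi_\pm$ are smooth with bounded derivatives, and differentiating under the convolution, for a unit vector $h\in\R^{dN}$,
$$ |D^3\varphi_\pm(x)[h,h,h]| \le \sum_{i,j,l}|h_i h_j h_l|\,\|\partial_{ijl}\rho\|_{L^1} \ll |h|_1^3\,\eps^{-3} \le (dN)^{3/2}\eps^{-3}, $$
using that the product structure of $\rho$ makes each $\|\partial_{ijl}\rho\|_{L^1} \ll \eps^{-3}$ and $|h|_1 \le \sqrt{dN}\,|h|_2$; hence $\|D^3\varphi_\pm\|_{op} \ll N^{3/2}\eps^{-3}$. Feeding this into \eqref{be-smooth} and using $\tilde G \stackrel{d}{=} G$,
$$ \P(S\in\Omega) \le \E\varphi_+(S) \le \E\varphi_+(\tilde G) + O(N^{5/2}\eps^{-3}\max_j|v_j|) \le \P(G\in\Omega\cup\partial_\eps\Omega) + O(N^{5/2}\eps^{-3}\max_j|v_j|), $$
and symmetrically, with $\varphi_-$, $\P(S\in\Omega) \ge \P(G\in\Omega\setminus\partial_\eps\Omega) - O(N^{5/2}\eps^{-3}\max_j|v_j|)$, which is the assertion.

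\emph{Main obstacle.} The only genuinely delicate point is the vanishing of the second‑order term in the swap in the complex case, where $D^2\varphi(W_k)$ must be handled as a real symmetric bilinear form on $\R^{2N}$: writing $\a_k v_k = \Re(\a_k)\,u_1 + \Im(\a_k)\,u_2$ with $u_1,u_2\in\R^{2N}$ depending only on $v_k$, one checks that $\E[D^2\varphi(W_k)[\a_k v_k,\a_k v_k]]$ enters only through $\E\Re(\a_k)^2 = \E\Im(\a_k)^2 = \tfrac12$ and $\E\Re(\a_k)\Im(\a_k) = 0$ — exactly the conditions shared with $\g_\C$. The mollification is routine once one respects the $l^\infty$ metric in the definition of $\partial_\eps\Omega$, and no attempt is made to optimize the exponent $5/2$, which arises as $N$ (from $\sum_k|v_k|^2 = N$) times $N^{3/2}$ (from converting the $l^1$ third‑derivative estimate into an $l^2$ one in dimension $dN$).
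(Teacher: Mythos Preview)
Your proof is correct and follows essentially the same approach as the paper's: a Lindeberg swap with Taylor expansion to third order, moment matching for the first two orders (using the $F$-normalization), the frame identity $\sum_k|v_k|^2 = \tr I_N = N$ to control $\sum_k|v_k|^3$, and mollification by a tensor-product bump at scale $\eps$ to pass from smooth test functions to indicators. The only cosmetic differences are that the paper mollifies $\mathbf{1}_\Omega$ directly with a single function (rather than your pair $\varphi_\pm$ built from $\Omega_\pm$), and that you are somewhat more explicit about the real/imaginary decomposition needed for the second-order cancellation in the complex case.
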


In order to apply this result, we need to ensure non-degeneracy of
the subspace $V = V_{s,n}(\a)$.  This is done in the following
proposition, which we will proved in Section \ref{normal-sec}.

\begin{proposition}[$V$ is non-degenerate]\label{v-nondeg0}  Let $E_c(\a)$ denote the
event that there does not exist a unit vector $v = (v_1,\ldots,v_n)$ in $V_{s,n}(\a)$
such that $\max_{1 \leq i \leq n} |v_i| \geq n^{-c}$.  If $c$ is a sufficiently small absolute constant, then
$$\P(\overline {E_c(\a)}) \ll \exp( - n^{\Omega(1)} ).$$
\end{proposition}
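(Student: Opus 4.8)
The plan is to show that no unit vector in $V_{s,n}(\a)$ can be ``localized'' (i.e. have a large coordinate) except with exponentially small probability, by combining two ingredients: a net argument over directions of possible localization, and a quantitative anti-concentration estimate showing that a random subspace defined by $n-s$ independent columns stays far from any fixed coordinate axis. Recall that $V = V_{s,n}(\a)$ is the orthogonal complement of $\Span(X_{s+1}(\a),\ldots,X_n(\a))$, where the $X_j(\a)$ are i.i.d. columns with $\a$-entries. So a unit vector $v \in V$ with $|v_i| \geq n^{-c}$ for some coordinate $i$ would be a unit vector orthogonal to all of $X_{s+1},\ldots,X_n$ and having nontrivial mass on the $i$-th coordinate. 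First I would fix the index $i$ (there are only $n$ choices, costing a factor of $n$ in the union bound, which is harmless against $\exp(-n^{\Omega(1)})$) and reformulate: I want to bound the probability that there exists a unit vector $v$ with $|v_i| \geq n^{-c}$ and $X_j \cdot v = 0$ for all $j > s$.

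The key step is to dualize this using the distance interpretation. Write $v = (v_i, v')$ splitting off the $i$-th coordinate. If $|v_i| \geq n^{-c}$ and $v \perp X_j$ for all $j>s$, then normalizing, the vector $w = v'/|v'|$ (a unit vector in the remaining $n-1$ coordinates) satisfies: the hyperplane structure forces $X_j \cdot v = 0$, i.e. $v_i (X_j)_i = -v' \cdot X_j^{(i)}$ where $X_j^{(i)}$ denotes $X_j$ with the $i$-th coordinate removed. This is getting complicated; a cleaner route: the existence of such a $v$ says precisely that the orthogonal projection of $e_i$ onto $V$ has norm at least $n^{-c}$ — equivalently, $\dist(e_i, \Span(X_{s+1},\ldots,X_n)) \geq n^{-c}$ is NOT what we want; rather we want to rule out $e_i$ being too close to $V$. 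Actually the correct statement: $\sup_{v \in V, |v|=1} |v_i| = |\pi_V(e_i)|$, so the event $E_c(\a)$ fails exactly when $|\pi_V(e_i)| \geq n^{-c}$ for some $i$. Since $\pi_V = I - \pi_W$ where $W = \Span(X_{s+1},\ldots,X_n)$ has dimension $n-s$, we have $|\pi_V(e_i)|^2 = 1 - |\pi_W(e_i)|^2 = \dist(e_i, W^{\perp} \cap \ldots)$... I would instead directly estimate $|\pi_V(e_i)|$ from above: since $V \subseteq F^n$ is $s$-dimensional and $s = n^{500/C_0}$ is a small power of $n$, a ``random'' $s$-dimensional subspace has $|\pi_V(e_i)| \approx \sqrt{s/n}$, which is much smaller than $n^{-c}$ once $c$ is chosen small enough relative to $C_0$ (recall $s/n = n^{500/C_0 - 1}$, so $\sqrt{s/n} = n^{250/C_0 - 1/2} \ll n^{-c}$ for, say, $c < 1/4$ and $C_0$ large). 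The content is showing the random subspace $V$ (which is NOT uniformly distributed, since the $X_j$ have general entries) still behaves this way.

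To make that rigorous I would proceed as follows. The squared length $|\pi_V(e_i)|^2$ is the $(i,i)$ entry of the projection matrix $P_V$, and $\sum_i |\pi_V(e_i)|^2 = \tr P_V = s$. So on average $|\pi_V(e_i)|^2 = s/n$; I need a high-probability uniform bound. For this I would invoke the machinery already available in the companion papers (the Inverse Littlewood–Offord / distance estimates of \cite{tv-lsv, rv0, tv-condition}): specifically, for each fixed $i$, and conditioning on $X_{s+1},\ldots,X_n$, one analyzes $|\pi_V(e_i)|$ by relating $V$ to the kernel of the $(n-s)\times n$ matrix with those rows; the delocalization of vectors in $V$ — equivalently, the statement that any fixed coordinate direction makes a substantial angle with $V$ — follows from the fact that $e_i$ is (with overwhelming probability) far from the column span $W$, plus a net argument over a $\delta$-net of the sphere in $V$ to upgrade ``for a fixed direction'' to ``for all unit vectors in $V$.'' The exponential bound $\exp(-n^{\Omega(1)})$ comes from the small-ball probability estimate for $\a \cdot$(generic vector): a fixed unit vector $u$ with a large coordinate makes $X_j \cdot u$ genuinely spread out, so the probability that $X_j \cdot u = 0$ for all $n - s$ values of $j$ simultaneously is at most (small-ball constant)$^{n-s} = \exp(-\Omega(n))$, and the net over the $s$-dimensional sphere costs only $\exp(O(s \log n)) = \exp(n^{o(1)})$, which is dominated. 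The main obstacle is the union/net bookkeeping: I must choose the net fine enough that a localized vector in $V$ is approximated by a net point that is still ``localized and nearly orthogonal to all $X_j$,'' while the small-ball estimate must be robust to the approximation error — this forces using the truncation \eqref{atoy} to control $|X_j|$ and hence the Lipschitz constant of $u \mapsto X_j \cdot u$, and requires the small-ball probability to hold not just at $0$ but on a small ball of radius comparable to the net error times $\max_j |X_j| \leq \sqrt{n}\, n^{10/C_0}$. Balancing the net scale $\delta$ against this error, against the small-ball radius, and against the exponent $n-s$ in the product is the delicate quantitative heart of the argument; everything else (the union over $i$, the reduction to a fixed subspace via conditioning on $X_{s+1},\ldots,X_n$, the identification $\sup_{v\in V,|v|=1}|v_i| = |\pi_V(e_i)|$) is routine.
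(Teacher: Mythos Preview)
Your proposal has a genuine circularity gap. You want to take a $\delta$-net on the $s$-dimensional unit sphere in $V = V_{s,n}(\a)$ and then, for each net point $u$, bound $\P(X_j \cdot u \approx 0 \text{ for all } j > s)$ using the randomness of the $X_j$'s. But $V$ is precisely the orthogonal complement of $\Span(X_{s+1},\ldots,X_n)$, so it is \emph{determined} by those same $X_j$'s: once you condition on $X_{s+1},\ldots,X_n$ to fix $V$ and lay down your net, there is no randomness left to exploit, and every $u \in V$ already satisfies $X_j \cdot u = 0$ with probability one. Conversely, if you refuse to condition and instead net over all unit vectors in $F^n$ with $|v_i| \geq n^{-c}$, that set is $(n-1)$-dimensional and a $\delta$-net has size $\exp(\Omega(n \log(1/\delta)))$; your per-point bound of $(\text{const})^{n-s}$ cannot beat this without a much sharper small-ball input (of Rudelson--Vershynin strength, with a full compressible/incompressible dichotomy) that you do not supply. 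The ``$s$-dimensional net'' and the ``randomness of $X_j$'' cannot both be available at the same time.

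The paper's proof takes a completely different, net-free route. After the same reduction to a single coordinate (say $|v_1| \geq n^{-c}$), one writes the $(n-s)\times n$ matrix $B$ with rows $X_{s+1},\ldots,X_n$ as $B=(Y_1,B')$, where $Y_1 \in F^{n-s}$ is the first column and $B'$ is the remaining $(n-s)\times(n-1)$ block; the orthogonality $Bv=0$ becomes $Y_1 v_1 = -B' v'$. The key input is Lemma~\ref{bai-cor0} (random matrices have many small singular values, plus interlacing): with probability $1-\exp(-n^{\Omega(1)})$, $B'$ has at least $n^{1-c_0}$ singular values of size $O(n^{1/2-c_0})$, yielding a subspace $V' \subset F^{n-s}$ of dimension $d \sim n^{1-c_0}$ on which $\|\pi_{V'} B'\|_{op} \ll n^{1/2-c_0}$. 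Projecting the equation onto $V'$ and using $|v_1| \geq n^{-c}$ forces $|\pi_{V'}(Y_1)| \ll n^{1/2 - c_0 + c}$. But $Y_1$ has i.i.d.\ entries independent of $B'$ (hence of $V'$), so Lemma~\ref{corup0} (Talagrand-type concentration of the projection of a random vector) gives $|\pi_{V'}(Y_1)| \sim \sqrt{d} \sim n^{1/2 - c_0/2}$ with probability $1-\exp(-n^{\Omega(1)})$. For $c$ small relative to $c_0$ this is a contradiction, and the union over the $n$ coordinates is harmless. The two ingredients you are missing are exactly Lemmas~\ref{bai-cor0} and~\ref{corup0}; neither a net nor a Littlewood--Offord estimate appears.
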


 An important  point here is that $c$ does not depend on $C_0$. For instance,
  we will be able to take $c := 1/20$.

  The idea that normal vectors are
  non-degenerate was considered in \cite{tv-det} and has since
  then become an important part in several papers on the least singular value problem,
 e.g. \cite{rudelson}, \cite{rv}, \cite{rv0}, \cite{rv2}, \cite{tv-condition}, \cite{tv-condition2},
  \cite{tv-circular}, \cite{tv-lsv}. The notion of non-degeneracy
  here is, however, somewhat different from those considered
  before. In the above mentioned papers, it was typically required
  that  no small set of coordinates (say $n^{.99}$ coordinates) 
  contains most of the mass of the vector (in other words, one
  cannot compress the vector into a much shorter one).  In contrast, we require here that no individual coordinate can contain a significant (but not overwhelming) portion of the mass.

Let us take this proposition for granted for now, and condition on $$X_{s+1}(\a),\ldots,X_n(\a)$$
so that $E_c(\a)$ holds; note that $X_1(\a),\ldots,X_s(\a)$ remain iid under this conditioning.  We can then write
$$ M_{s,n}(\a) = \sum_{i=1}^s \sum_{j=1}^n \a_{ij} V_{ij}$$
where the $\a_{ij}$ are iid copies of $\a$, and $V_{ij}$ is the $s \times s$ matrix with
all rows vanishing except the $i^{th}$ row, which is equal to $\pi(e_j)$, where $e_j$ is the $j^{th}$ basis vector of $F^n$.
Since $\pi$ is a partial isometry, $\pi \pi^* = I_s$, which implies that
$$ \sum_{i=1}^s \sum_{j=1}^n V_{ij} V_{ij}^* = I_{s^2}$$
where we view identify the space of $s \times s$ matrices with the vector space $F^{s^2}$ in the standard manner.

Now let $\Omega \subset F^{s^2}$ be the set of all $s \times s$ matrices $A$ such that $s \sigma_s( A )^2 \leq t_+$, thus
$$
\P( s \sigma_s( M_{s,n}( \a ) )^2 \leq t_+ | X_{s+1}(\a),\ldots,X_n(\a) ) = \P( \sum_{i=1}^s \sum_{j=1}^n \a_{ij} V_{ij} \in \Omega | X_{s+1}(\a),\ldots,X_n(\a) ).$$
Applying Proposition \ref{berry-esseen-frame0} (with $N := s^2$) and the definition of the event $E_c$, we thus have
$$
\P( s \sigma_s( M_{s,n}( \a ) )^2 \leq t_+ |  X_{s+1}(\a),\ldots,X_n(\a) ) \leq \P( M_s(\g_F) \in \Omega \cup \partial_\eps \Omega ) + O(s^{5} \eps^{-3} n^{-c})$$
where $\eps$ is a parameter to be chosen later.  But by \eqref{lips} and the crude bound
$$\|A\|_{op} \leq \|A\|_F \leq s \sup_{1 \leq i,j \leq s} |a_{ij}|$$
for any $s \times s$ matrix $A = (a_{ij})_{1 \leq i,j \leq s}$, we see that if $A$ is an $s \times s$ matrix in $\Omega \cup \partial_\eps \Omega$, then
$$ \sigma_s( A  ) \leq \sqrt{\frac{t_+}{s}} + O( s \eps ) $$
and thus

\begin{align*} 
&\P( s \sigma_s( M_{s,n}( \a ) )^2 \leq t_+ | |  X_{s+1}(\a),\ldots,X_n(\a) ) \\ &\leq \P( M_s(\g_F)
\leq \sqrt{\frac{t_+}{s}} + O( s \eps ) ) + O( s^{5} \eps^{-3} n^{-c} ).\end{align*}
Setting $\eps := n^{-c/4}$ (say), we see from construction of $s$ that $s^{5} \eps^{-3} n^{-c} = O( n^{-1/C_0} )$, if $C_0$ is large enough depending on $c$.  Also, we can absorb the $O(s\eps)$ error into the main term $\sqrt{\frac{t_+}{s}}$ by replacing $t_+$ by a slightly larger quantity, e.g.
$$ t_{++} := \frac{1}{ \max( t^{-1} - O(n^{-30/C_0}), 0 ) }.$$
Integrating over all $X_{s+1}(\a),\ldots,X_n(\a)$ obeying $E_c$, and then using Proposition \ref{v-nondeg0} and the above argument,  we conclude that
\begin{equation}\label{nna}
\P( n \sigma_n( M_n( \a ) )^2 \leq t ) \leq \P( s \sigma_s( M_s(\g_F) )^2 \leq t_{++} ) + O( n^{-1/C_0} ).
\end{equation}
A similar argument gives
\begin{equation}\label{nnb}
\P( n \sigma_n( M_n( \a ) )^2 \leq t ) \geq \P( s \sigma_s( M_s(\g_F) )^2 \leq t_{--} ) - O( n^{-1/C_0} ).
\end{equation}
where
$$ t_{--} := \frac{1}{ \max( t^{-1} - O(n^{-30/C_0}), 0 ) }.$$

At this point we could use Theorem \ref{edel-thm} and obtain a qualitative version of Theorem \ref{main-thm}. This would result in   error terms $o(1)$ rather than $O(n^{-c})$. However, 
 one only needs a little more effort to obtain the full result.
The estimates \eqref{nna}, \eqref{nnb} hold with $n$ replaced by $2n$;
adjusting $t$ appropriately (as well as the implied constants defining $t_{--}, t_{++}$), one obtains the bounds
\begin{equation}\label{t1}
\P( n \sigma_n( M_n( \a ) )^2 \leq t ) \leq \P( 2n \sigma_{2n}( M_{2n}( \a ) )^2 \leq t_{++} ) + O( n^{-1/C_0} )
\end{equation}
and
\begin{equation}\label{t2}
 \P( n \sigma_n( M_n( \a ) )^2 \leq t ) \geq \P( 2n \sigma_{2n}( M_{2n}( \a ) )^2 \leq t_{--} ) - O( n^{-1/C_0} ).
 \end{equation}
This bound is established under the hypothesis \eqref{atoy}, but as discussed at the beginning of the section, we may
easily remove this hypothesis.  In particular, the above bounds now hold for the gaussian distribution
 $\a = \g_F$.  Meanwhile, from Theorem \ref{edel-thm} we have
\begin{equation}\label{nsoup}
\lim_{n \to \infty} \P( n \sigma_n( M_n( \g_F ) )^2 \leq t ) = \int_0^t f(x)\ dx.
\end{equation}
Iterating\footnote{One way to interpret this iteration is to view \eqref{t1}, \eqref{t2} as asserting that the random variables $n \sigma_n( M_n(\a) )^2$ form a Cauchy sequence in the L\'evy metric.} \eqref{t1}, \eqref{t2} and using \eqref{nsoup} to pass to the limit (and adjusting the implied constants in the definition of $t_{--}$, $t_{++}$ again), we conclude that
$$ \P( n \sigma_n( M_n( \g_F ) )^2 \leq t ) \leq \int_0^{t_{++}} f(x)\ dx + O( n^{-1/C_0} )$$
and
$$ \P( n \sigma_n( M_n( \g_F ) )^2 \leq t ) \geq \int_0^{t_{--}} f(x)\ dx + O( n^{-1/C_0} );$$
since $f$ is absolutely integrable and decays exponentially at infinity, we thus have
$$ \P( n \sigma_n( M_n( \g_F ) )^2 \leq t ) = \int_0^t f(x)\ dx + O( n^{-1/C_0} ).$$
Inserting this bound (with $n$ replaced by $s$) into \eqref{nna}, \eqref{nnb} and using the continuity of $f$ again,
we obtain \eqref{jar}, and Theorem \ref{main-thm}.

\begin{remark} \label{remark:main}  The above argument used the explicit limiting statistics for the hard edge of the gaussian random matrix spectrum in Theorem \ref{edel-thm}.  If one refused to use this theorem, the best one could say using the above argument is that there exists a random variable $X_F$ taking values in the positive real line and independent of $n$ and $\a$ such that
$$\P( X_F \leq t - n^{-c} ) - n^{-c}\leq \P( n \sigma_n( M_n( \a ) )^2 \leq t ) \leq \P( X_F \leq t + n^{-c}  ) + n^{-c}$$
for any $t > 0$.  (We can replace $O(n^{-c})$ by $n^{-c}$ by slightly changing the value of 
$c$.) In other words, the random variables $n \sigma_n( M_n( \a ) )^2$ converge at some polynomial rate with respect to the L\'evy metric to a universal limit $X_F$ depending only on the field $F$.  Theorem \ref{edel-thm} can then be viewed as a computation as to what this universal limit is.
\end{remark}

%It remains to prove Proposition \ref{tailbound} and Proposition \ref{v-nondeg}.  This will be done in the next two sections.

\section{Non-degeneracy of normal vectors}\label{normal-sec}

In this section we prove Proposition \ref{v-nondeg0}.  Let $c > 0$ be a sufficiently small constant.
 By the union bound and symmetry, it suffices to show that
\begin{equation}\label{v1}
 \P( |v_1| \geq n^{-c} \hbox{ for some } v = (v_1,\ldots,v_n) \in V_{s,n}, |v|=1 ) \ll \exp( - n^{\Omega(1)} ).
\end{equation}
Let $v = (v_1,\ldots,v_n)$ be such that the event in \eqref{v1} occurs.  Since $v \in V_{s,n}$, we have
$$ B v = 0$$
where $B$ is the $(n-s) \times n$ matrix with rows $X_{s+1},\ldots,X_n$.  We write $B = (Y_1, B')$,
where $Y_1 \in F^{n-s}$ is a column vector whose entries are iid copies of $\a$, and $B'$ is a $n-s \times n-1$
matrix whose entries are also iid copies of $\a$.  We similarly write $v = (v_1,v')$ where $v' := (v_2,\ldots,v_n) \in F^{n-1}$.
Then we have
\begin{equation}\label{yib}
Y_1 v_1 + B' v' = 0.
\end{equation}

Our first  tool here is the following lemma, which asserts that
with overwhelming probability, a random matrix has many small
singular values. We will prove this lemma in Appendix
\ref{section:MP}.

\begin{lemma}[Many small singular values]\label{bai-cor0}  Let $n \geq 1$, and let $\a$ be $\R$-normalized or $\C$-normalized,
 such that $|\a| \leq n^\eps$ almost surely for some sufficiently small absolute constant $\eps > 0$.
 Then there are positive constants $c, c'$ such that with probability 
 $1- \exp(-n^{\Omega (1)} )$, $M_{n} (\a)$ has at least 
 $c' n^{1-c}$ singular values in the interval $[0, n^{1/2-c}]$. 
\end{lemma}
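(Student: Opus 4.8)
The plan is to exploit the Marchenko--Pastur law, which says that the empirical spectral distribution of $\frac1n M_n(\a)M_n(\a)^*$ converges to the Marchenko--Pastur measure, and in particular places a positive mass $\rho_0 > 0$ on the interval $[0,\delta]$ for a suitable small absolute constant $\delta > 0$. Translating back to singular values, this suggests that of order $n$ singular values of $M_n(\a)$ should lie in $[0, \sqrt{\delta n}]$, which is more than enough to give $c' n^{1-c}$ singular values in $[0, n^{1/2-c}]$ once $c$ is small. So the real content is to upgrade the usual ``in probability'' convergence in Marchenko--Pastur to a statement with an exponentially small failure probability under the truncation hypothesis $|\a| \le n^\eps$.

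First I would set up the count via the eigenvalue counting function. Let $N_I(W)$ denote the number of eigenvalues of the Wishart matrix $W := M_n(\a) M_n(\a)^*$ in an interval $I$. Writing $\mu_{MP}$ for the Marchenko--Pastur law, fix $\delta > 0$ small enough that $\mu_{MP}([0,\delta]) =: \rho_0 > 0$; then the target reduces to showing $N_{[0,\delta n]}(W) \ge c' n$ with probability $1 - \exp(-n^{\Omega(1)})$, since $[0,\delta n] \subseteq [0, n^{1-2c}] $ for $n$ large and $c$ small, i.e. the singular values in question lie in $[0,n^{1/2-c}]$. To control $N_{[0,\delta n]}(W)$ from below it is convenient to smooth: pick a nonnegative Lipschitz test function $\varphi$ with $\mathbf{1}_{[0,\delta/2]} \le \varphi \le \mathbf{1}_{[0,\delta]}$, so that $N_{[0,\delta n]}(W) \ge \sum_i \varphi(\lambda_i(W)/n) = \operatorname{trace}\varphi(W/n)$, and it suffices to show $\operatorname{trace}\varphi(W/n) \ge \tfrac12 n\int \varphi\, d\mu_{MP}$ with the desired probability. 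The expectation $\E\,\operatorname{trace}\varphi(W/n) = n \int\varphi\,d\mu_{MP} + o(n)$ is the standard (weak) Marchenko--Pastur convergence, valid under two bounded moments, and in fact quantitatively with a power-saving error under the truncation; the issue is concentration.

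The main obstacle is exactly this concentration step: showing that $\operatorname{trace}\varphi(W/n)$ does not deviate from its mean by more than $o(n)$ except with probability $\exp(-n^{\Omega(1)})$. The natural tool is a martingale (Azuma--Hoeffding / bounded differences) argument: expose the columns $X_1(\a),\ldots,X_n(\a)$ of $M_n(\a)$ one at a time, let $\CF_j$ be the $\sigma$-algebra generated by the first $j$ columns, and write $\operatorname{trace}\varphi(W/n) - \E\,\operatorname{trace}\varphi(W/n) = \sum_{j=1}^n (\E[\cdot\,|\,\CF_j] - \E[\cdot\,|\,\CF_{j-1}])$. Because $\varphi$ is Lipschitz and $W/n$ changes by a rank-one (in fact, replacing one column) perturbation when one column is resampled, the interlacing inequalities for eigenvalues under rank-one perturbations give $|\operatorname{trace}\varphi(W/n) - \operatorname{trace}\varphi(\tilde W/n)| \le 2\|\varphi\|_{\mathrm{Lip}} + 2$, so each martingale difference is bounded by an absolute constant. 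Azuma then gives $\P(|\operatorname{trace}\varphi(W/n) - \E| \ge \kappa n) \le 2\exp(-\Omega(\kappa^2 n))$, which is far stronger than needed. (Here the truncation $|\a|\le n^\eps$ is what a priori guarantees finiteness and lets one control the expectation with a quantitative rate; the concentration itself is robust.) Combining the expectation asymptotics with this concentration, with $\kappa := \tfrac14 \int\varphi\,d\mu_{MP}$, yields $\operatorname{trace}\varphi(W/n) \ge \tfrac12 n\int\varphi\,d\mu_{MP} \ge c' n$ off an event of probability $\exp(-\Omega(n))$, and in particular at least $c' n^{1-c}$ singular values in $[0,n^{1/2-c}]$, completing the proof. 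The only place where care is genuinely needed is verifying that the truncated, renormalized $\a$ still has spectral distribution converging to Marchenko--Pastur with a power-saving error (so that the $o(n)$ in the mean is in fact $O(n^{1-\Omega(1)})$, matching the exponential tail) — this is by now standard and follows, e.g., from the moment method combined with the truncation bound.
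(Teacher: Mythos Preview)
There is a genuine scale error in the reduction. You claim the inclusion $[0,\delta n]\subseteq[0,n^{1-2c}]$ for a \emph{fixed} $\delta>0$ and some $c>0$, but for large $n$ this inclusion goes the other way: $\delta n > n^{1-2c}$. So your argument produces $\Omega(n)$ singular values in the interval $[0,\sqrt{\delta}\,n^{1/2}]$, which is strictly \emph{larger} than the target interval $[0,n^{1/2-c}]$, and this does not yield the lemma. To repair it you must let the test function $\varphi$ be supported on the shrinking interval $[0,n^{-2c}]$. The Marchenko--Pastur density behaves like $x^{-1/2}$ near $0$, so the mass on $[0,n^{-2c}]$ is only of order $n^{-c}$ and the expected count is $\sim n^{1-c}$; consequently you need a \emph{quantitative} convergence rate (Kolmogorov error $O(n^{-c_2})$, as in the Bai rate-of-convergence theorem) to make the error in the mean $o(n^{1-c})$, which forces $c<c_2$. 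This is exactly the input the paper uses as well.

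A second issue surfaces once you shrink the support: your bounded-difference estimate $|\operatorname{trace}\varphi(W/n)-\operatorname{trace}\varphi(\tilde W/n)|\le 2\|\varphi\|_{\mathrm{Lip}}+2$ is the wrong form, and if taken literally it would blow up like $n^{2c}$. The bound that actually follows from rank-two interlacing (replacing one column changes $W$ by a rank-$\le 2$ matrix) is $|\operatorname{trace}\varphi(W/n)-\operatorname{trace}\varphi(\tilde W/n)|\le 2\,V(\varphi)$, where $V(\varphi)$ is the total variation; for a bump function this is at most $2$ regardless of the support width. With this corrected, Azuma gives failure probability $\exp(-\Omega(n^{1-2c}))$, which suffices. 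For comparison, the paper follows the same two-step plan (rate of convergence for the mean, then concentration) but obtains concentration via the Guionnet--Zeitouni/Talagrand route: pass to the Hermitian dilation, build two auxiliary \emph{convex} Lipschitz functions $f_1,f_2$ with $f_1-f_2$ sandwiching the indicator of the shrinking interval, and apply Talagrand's convex-distance inequality to each. Your Azuma/interlacing argument is more elementary and sidesteps the convexity requirement, but it only works once the scale is set correctly.
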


The next tool has a geometric flavor. It asserts that the
orthogonal projection of a random vector onto a large subspace is
strongly concentrated.

\begin{lemma}[Concentration of the projection of a random vector]\label{corup0} Let $n \geq d \geq 1$ be integers,
let $K \geq 1$, let $F$ be the real or complex field, let $\a$ be $F$-normalised with $|\a| \leq K$ almost surely,
and let $V$ be a subspace of $F^n$ of dimension $d$.  Let $X := (\a_1,\ldots,\a_n)$, where $\a_1,\ldots,\a_n$ are iid
copies of $\a$.  Suppose that $d > CK^2$ for some sufficiently large absolute constant $C$.  Then
$$ \P( |\pi_V(X) - \sqrt{d}| \geq \sqrt{d}/2 ) \ll \exp( - \Omega( d / K^2 ) ).$$
\end{lemma}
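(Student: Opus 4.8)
The plan is to work with the function $f(X):=|\pi_V(X)|=\|PX\|_2$, where $P=P_V$ is the orthogonal projection of $F^n$ onto $V$. Two elementary observations drive the argument. First, $f$ is \emph{convex} (it is a norm composed with a linear map) and \emph{$1$-Lipschitz} with respect to the Euclidean metric, since $\|P\|_{\op}=1$. Second, its second moment is exactly $d$: writing $f(X)^2=X^*PX=\sum_{i,j}\bar\a_i P_{ij}\a_j$ and using that the $\a_i$ are independent with $\E\a_i=0$ and $\E|\a_i|^2=1$, we get $\E f(X)^2=\sum_i P_{ii}=\trace P=\dim V=d$. So the task is to upgrade the equality $\E f(X)^2=d$ to concentration of $f(X)$ itself near $\sqrt d$.

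For the concentration, I would view $X=(\a_1,\dots,\a_n)$ as a point of the product $\prod_{i=1}^n D_K$, where $D_K=\{z\in F:|z|\le K\}$ is a convex set of diameter at most $2K$ (in the real case simply the interval $[-K,K]$), and apply Talagrand's concentration inequality for convex Lipschitz functions on products of bounded convex sets. Since $f$ is convex and $1$-Lipschitz, this yields
$$\P(|f(X)-M_f|\ge\lambda)\ll\exp(-\Omega(\lambda^2/K^2))$$
for all $\lambda>0$, where $M_f$ is a median of $f(X)$. It is essential to concentrate $f$ and not $f^2$: the Lipschitz constant of $f$ is $1$ independent of $d$, which is exactly what produces the decay rate $d/K^2$; concentrating the quadratic form $f^2$ directly (e.g. via Hanson--Wright) or using a bounded-differences/Azuma estimate would only deliver the weaker rate $d/K^4$.

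It remains to locate the median. Markov's inequality applied to $f(X)^2$ gives $\P(f(X)^2\ge 4d)\le 1/4<1/2$, hence $M_f<2\sqrt d$. Integrating the tail bound from the previous step gives $\E[(f(X)-M_f)^2]=O(K^2)$ and therefore $\E|f(X)-M_f|=O(K)$, so
$$|d-M_f^2|=\bigl|\E(f-M_f)^2+2M_f\,\E(f-M_f)\bigr|=O\bigl(K^2+M_f K\bigr)=O\bigl(K^2+\sqrt d\,K\bigr).$$
Because $d>CK^2$, both $K^2$ and $\sqrt d\,K$ are $O(d/\sqrt C)$, so $|M_f-\sqrt d|=|M_f^2-d|/(M_f+\sqrt d)=O(\sqrt d/\sqrt C)$, which is at most $\sqrt d/4$ once the absolute constant $C$ is taken large enough. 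Combining this with the concentration bound,
$$\P(|\pi_V(X)-\sqrt d|\ge\sqrt d/2)\le\P(|f(X)-M_f|\ge\sqrt d/4)\ll\exp(-\Omega(d/K^2)),$$
which is the assertion of the lemma.

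The main obstacle is a matter of picking the right tool and of bookkeeping rather than a deep difficulty: one must concentrate the $1$-Lipschitz functional $\|PX\|$ itself (via convexity, i.e. Talagrand) to obtain the claimed exponent, and then bootstrap the median back to $\sqrt d$ — the single place where the hypothesis $d>CK^2$ is used. A minor subtlety in the complex case is that $\Re\a_i$ and $\Im\a_i$ are only uncorrelated, not independent, so one must treat each $\a_i\in F=\C$ as a single (two--real--dimensional) coordinate when invoking Talagrand's inequality.
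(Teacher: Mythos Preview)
Your proof is correct and follows essentially the same route as the paper: apply Talagrand's inequality to the convex $1$-Lipschitz map $X\mapsto\|PX\|$, compute $\E\|PX\|^2=\trace P=d$, use Markov to cap the median by $2\sqrt d$, and then integrate the tail bound to pin the median within a small multiple of $\sqrt d$ (this last step being the only place $d>CK^2$ is invoked). Your bookkeeping for locating the median via $\E(f-M_f)^2$ and $\E|f-M_f|$ is a mild variant of the paper's, which instead bounds $\P(|f^2-M_f^2|\ge t)$ and integrates that; the two are interchangeable, and your remark about treating each complex coordinate as a single factor when invoking Talagrand is exactly how the paper handles the complex case.
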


This lemma is an extension of  the special case considered in
\cite{tv-det}, in which  $\a$ is Bernoulli. We are going to prove
this lemma in Appendix \ref{section:concentration}.

Applying Lemma \ref{bai-cor0} to an $(n-s) \times ( n-s)$ block of $B$
and using the interlacing law (Lemma \ref{cauchy}), we conclude
(given that $C_0$ is sufficiently large) that with probability $1
- \exp( - n^{\Omega(1)} )$, $B'$ has at least $n^{1-c_0}$ singular
values of size $O( n^{\frac{1}{2} - c_0} )$, for some absolute
constant $c_0
> 0$. This implies the existence of a subspace $V \subset F^{n-s}$
of dimension $d := \lfloor n^{1-c_0} \rfloor$ such that $\| \pi_V
B' \|_{op} \ll n^{\frac{1}{2} - c_0}$, where $\pi_V$ is the
orthogonal projection to $V$.
 Let us condition on the event that this is the case (note that this event does not depend on $v$). Then we have
$$ | \pi_V B' v' | \ll n^{\frac{1}{2} - c_0}.$$
Combining this with \eqref{yib} and the hypothesis $|v_1| \geq n^{-c}$, we conclude that
$$ |\pi_V(Y_1)| \ll n^{\frac{1}{2} - c_0 + c}.$$

On the other hand, from Lemma \ref{corup0} we see that with
probability $1 - \exp( -n^{\Omega(1)})$, we have
$$ |\pi_V(Y_1)| \sim \sqrt{d} \sim n^{\frac{1}{2} - c_0/2}.$$
If we choose $c$ sufficiently small depending on $c_0$, we obtain a contradiction with probability
$1 - \exp( -n^{\Omega(1)})$.  The claim \eqref{v1} follows.

\section{A lower bound for  the distance between 
a random vector and a random hyperplane}\label{tailbound-sec}

In this section we prove Proposition \ref{tailbound0}.  This type of result is
 somewhat related to the lower tail bounds on the lowest singular value
 in the literature (e.g. \cite{litvak}, \cite{rudelson}, \cite{rv0}, \cite{rv2},
 \cite{tv-condition}, \cite{tv-condition2}, \cite{tv-lsv}).
 There are two basic means to obtain such bounds, namely via Littlewood-Offord theorems and via
 Berry-Ess\'een-type central limit theorems.  We will rely solely on the second route (cf. \cite{litvak}, \cite[Section 2.3]{rv0}).

 Let us begin with a simple lemma.

 \begin{lemma}[Distance from vector to hyperplane controls inverse]\label{dvh}
 Let $n \geq 1$, let $F$ be the real or complex field, let $A$ be an $n \times n$ $F$-valued invertible
  matrix with columns $X_1,\ldots,X_n$, and let $R_1,\ldots,R_n$ denote the rows of $A^{-1}$.  Then for any $1 \leq i \leq n$, we have
$$ |R_i| = \dist( X_i, V_i )^{-1}$$
where $V_i$ is the hyperplane spanned by
$X_1,\ldots,X_{i-1},X_{i+1},\ldots,X_n$.
\end{lemma}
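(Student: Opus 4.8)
The plan is to use the defining duality relation between the rows of $A^{-1}$ and the columns of $A$, namely $R_i \cdot X_j = \delta_{ij}$, which says precisely that $R_i$ is the unique vector orthogonal to all $X_j$ with $j \neq i$ and satisfying $R_i \cdot X_i = 1$. This single observation gives us both the direction and the length of $R_i$.

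First I would fix $i$ and let $V_i = \operatorname{Span}(X_1,\ldots,X_{i-1},X_{i+1},\ldots,X_n)$; since $A$ is invertible this is a genuine hyperplane (an $(n-1)$-dimensional subspace). The relations $R_i \cdot X_j = 0$ for $j \neq i$ say exactly that $R_i \in V_i^\perp$, which is a line. Write $u$ for the unit vector spanning $V_i^\perp$ (for the complex case, $V_i^\perp$ under the Hermitian inner product), so $R_i = \lambda u$ for some scalar $\lambda$. Then from $R_i \cdot X_i = 1$ we get $\lambda (u \cdot X_i) = 1$, hence $|R_i| = |\lambda| = 1/|u \cdot X_i|$.

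Next I would identify $|u \cdot X_i|$ with $\operatorname{dist}(X_i, V_i)$. Decompose $X_i = P_{V_i} X_i + (u \cdot X_i)\, u$ into its orthogonal projection onto $V_i$ plus its component along $u$; this is the standard orthogonal decomposition since $u$ spans $V_i^\perp$. The distance from $X_i$ to the subspace $V_i$ is the norm of the component perpendicular to $V_i$, i.e. $\operatorname{dist}(X_i, V_i) = |(u \cdot X_i)\, u| = |u \cdot X_i|$ (as $|u| = 1$). Combining the two displays yields $|R_i| = \operatorname{dist}(X_i, V_i)^{-1}$, as claimed.

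There is essentially no obstacle here: the only minor point to keep straight is the complex (Hermitian) case, where one must use the conjugate-linear inner product consistently so that "$R_i$ is the dual basis" continues to read $\langle R_i, X_j \rangle = \delta_{ij}$ and the orthogonal projection formula remains valid; this is routine. One should also note that $\operatorname{dist}(X_i, V_i) > 0$ precisely because $X_i \notin V_i$ (again by invertibility of $A$), so the reciprocal is well-defined.
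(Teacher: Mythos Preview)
Your argument is correct and is exactly the direct verification the paper has in mind: it simply remarks that the lemma follows from the identity $A^{-1}A = I$ (i.e., from the duality relations $R_i \cdot X_j = \delta_{ij}$), and alternatively notes that it is the special case $s=1$ of the Projection Lemma (Lemma \ref{proj0}). Your write-up spells out precisely that verification, including the needed care with the Hermitian inner product in the complex case.
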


One can directly verify this lemma using the identity $AA^{-1}
=I$. It is also the special case $s=1$ of Lemma \ref{proj0}.

By Lemma \ref{dvh}, the  desired proposition is equivalent to the
lower tail estimate
\begin{equation}\label{pmin}
 \P( \min_{1 \leq i \leq n} d_i \leq n^{-100/C_0} ) \ll n^{-1/C_0}
\end{equation}
where $d_i := \dist(X_i,V_i)$, $X_i := X_i(\a)$, $V_i :=
\Span(X_1,\ldots,X_{i-1},X_{i+1},\ldots,X_n)$.

To start, we first study the distribution of $d_1$.  Let us
temporarily condition on (i.e. freeze) the vectors  $X_2,\ldots,X_n$, leaving
$X_1$ random, and let $(v_1,\ldots,v_n)$ be a unit normal vector  to the
hyperplane spanned by $X_2,\ldots,X_n$.  Applying Proposition
\ref{v-nondeg0}, we see that with probability $1 - \exp(
-n^{\Omega(1)})$, we have $\max_{1 \leq i \leq n} |v_i| \leq
n^{-c}$ for some absolute constant $c > 0$.  Suppose that this
event occurs.  If we write $X_1 = (\a_1,\ldots,\a_n)$, where
$\a_1,\ldots,\a_n$ are iid copies of $\a$, then we have
$$ d_1 = | \a_1 v_1 + \ldots + \a_n v_n |.$$
Applying\footnote{When $F=\R$, one could also use the classical
Berry-Ess\'een theorem, Proposition \ref{berry}.} Proposition
\ref{berry-esseen-frame0}, we thus see that for any $t > 0$ and
$\eps > 0$, we have
$$ \P( d_1 \leq t ) \leq \P( |\g_F| \leq t + O(\eps) ) + O( \eps^{-3} n^{-c} );$$
setting $\eps := n^{-c/4}$ we conclude
$$ \P( d_1 \leq t ) \leq \P( |\g_F| \leq t ) + O( n^{-c/4} ).$$
We can bound $\P( |\g_F| \leq t )$ by $O(t)$ (when $F=\C$, we have
the stronger bound of $O(t^2)$, though we will not exploit this).
By symmetry, we thus obtain the lower tail estimate

\begin{equation}\label{lower-tail}
\P( d_i \leq t ) \ll t + n^{-c/4}.
\end{equation}

Notice that a slight modification of the above argument also yields the following corollary:

\begin{corollary}[Random distance is gaussian]\label{cor:gaussdistance} 
Let $X_1, \dots, X_n$ be random vectors whose
entries are iid copies of $\a$. Then  the distribution of the
distance $d_1$ from $X_1$ to $\Span (X_2, \dots, X_n)$ is approximately
gaussian, in the sense that
$$ \P( d_1 \leq t ) = \P( |\g_F| \leq t ) + O( n^{-c/4} ).$$
\end{corollary}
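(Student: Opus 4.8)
The plan is to observe that Corollary \ref{cor:gaussdistance} is essentially already proven in the course of establishing \eqref{lower-tail}: the only difference is that we now want a two-sided estimate (an asymptotic), rather than just the upper bound $\P(d_1 \le t) \le \P(|\g_F| \le t) + O(n^{-c/4})$ that was recorded. So the proof consists of re-running the same argument but keeping the lower bound from Proposition \ref{berry-esseen-frame0} as well, and then cleaning up the error terms.

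Concretely, first I would condition (freeze) the vectors $X_2,\ldots,X_n$ and let $v = (v_1,\ldots,v_n)$ be a unit normal to $\Span(X_2,\ldots,X_n)$, exactly as in the body of Section \ref{tailbound-sec}. Applying Proposition \ref{v-nondeg0}, with probability $1 - \exp(-n^{\Omega(1)})$ we have $\max_i |v_i| \le n^{-c}$; call this good event $E$ (it depends only on $X_2,\ldots,X_n$). On $E$, write $X_1 = (\a_1,\ldots,\a_n)$ with the $\a_i$ iid copies of $\a$, so that $d_1 = |\a_1 v_1 + \cdots + \a_n v_n| = |S|$ where $S$ is as in Proposition \ref{berry-esseen-frame0} with $N=1$ and the single-element frame normalization $v v^* = \sum_i v_i v_i^* = 1$ (which holds since $v$ is a unit vector). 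Take $\Omega := \{x \in F : |x| \le t\}$; then $\partial_\eps \Omega \subset \{x : t - \eps \le |x| \le t + \eps\}$. Proposition \ref{berry-esseen-frame0} (using $N=1$ and $\max_j |v_j| \le n^{-c}$) gives both
\begin{equation*}
\P(|\g_F| \le t - \eps) - O(\eps^{-3} n^{-c}) \le \P(d_1 \le t) \le \P(|\g_F| \le t + \eps) + O(\eps^{-3} n^{-c}).
\end{equation*}
Now use that the density of $|\g_F|$ is bounded, so $\P(|\g_F| \le t \pm \eps) = \P(|\g_F| \le t) + O(\eps)$; choosing $\eps := n^{-c/4}$ makes both $\eps$ and $\eps^{-3} n^{-c} = n^{-c/4}$, giving $\P(d_1 \le t \mid E) = \P(|\g_F| \le t) + O(n^{-c/4})$ uniformly over the conditioning in $E$. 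Finally, integrate over $X_2,\ldots,X_n$: since $\P(\overline E) \le \exp(-n^{\Omega(1)}) = O(n^{-c/4})$, the contribution of $\overline E$ is absorbed into the error term, yielding $\P(d_1 \le t) = \P(|\g_F| \le t) + O(n^{-c/4})$ as claimed.

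There is really no serious obstacle here, since all the hard work (the central limit theorem of Proposition \ref{berry-esseen-frame0} and the non-degeneracy Proposition \ref{v-nondeg0}) is quoted. The one point requiring a small amount of care is that the hypotheses of Proposition \ref{berry-esseen-frame0} demand only a finite third moment $\E|\a|^3 < \infty$, which is implied by the $C_0$-moment hypothesis, and that the normalization \eqref{vivi} is met — here trivially, because $v$ is a unit vector and $N=1$. The other mild subtlety is the remark that one should really state the result for $d_1$ unconditionally: because $d_1$ does not depend on the ordering of $X_2,\ldots,X_n$ and, by exchangeability of the columns, each $d_i$ has the same law, the statement is symmetric in $i$ and the bound transfers to every $d_i$; this is exactly the ``by symmetry'' step already used to pass from $d_1$ to \eqref{lower-tail}.
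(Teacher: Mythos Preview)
Your proposal is correct and is exactly the ``slight modification of the above argument'' that the paper alludes to: you re-run the conditioning on $X_2,\ldots,X_n$, the non-degeneracy event from Proposition \ref{v-nondeg0}, and the application of Proposition \ref{berry-esseen-frame0} with $N=1$ and $\eps=n^{-c/4}$, this time retaining both the upper and lower bounds from the Berry--Ess\'een proposition rather than only the upper bound used for \eqref{lower-tail}. The remaining steps (bounded density of $|\g_F|$ to absorb the $\pm\eps$ shift, and integrating out the conditioning while absorbing the exceptional event of probability $\exp(-n^{\Omega(1)})$) are routine and correctly handled.
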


A naive application of the union bound to \eqref{lower-tail} is
clearly insufficient to establish \eqref{pmin}, since we have $d$
distances to deal with. However, we can at least use that bound to
show that
$$
\P( \min_{1 \leq i \leq L} d_i \leq t ) \ll L ( t + n^{-c/4} )
$$
for any $1 \leq L \leq n$.

  The key fact that enables us to overcome the  ineffectiveness
  of the  union bound is that the distances $d_{i}$ are correlated.
  They tend to be large or small at the same time. Quantitatively, we have

\begin{lemma}[Correlation between distances]\label{Relation} Let $n \geq 1$, let $F$
 be the real or complex field, let $A$ be an $n \times n$ $F$-valued invertible matrix with columns
 $X_1,\ldots,X_n$, and let $d_i := \dist(X_i,V_i)$ denote the distance from $X_i$ to the hyperplane
 $V_i$ spanned by $X_1,\ldots,X_{i-1},X_{i+1},\ldots,X_n$.  Let $1 \leq L < j \leq n$, let $V_{L,j}$
 denote the orthogonal complement of the span of $X_{L+1},\ldots,X_{j-1},X_{j+1},\ldots,X_n$, and let
 $\pi_{L,j}: F^n \to V_{L,j}$ denote the orthogonal projection onto $V_{L,j}$.  Then
$$ d_j \geq \frac{ | \pi_{L,j}(X_j) | }{ 1 + \sum_{i=1}^L \frac{|\pi_{L,j}(X_i)|}{d_i} }.$$
\end{lemma}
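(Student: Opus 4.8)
The plan is to express $X_j$ in terms of the basis dual to the columns, exploit the defining property of $V_{L,j}$, and then use the triangle inequality after projecting. First I would recall that $\dist(X_j, V_j)$ is the length of the component of $X_j$ orthogonal to $\Span(X_1,\dots,X_{j-1},X_{j+1},\dots,X_n)$; equivalently, $d_j = |P_j X_j|$ where $P_j$ is the orthogonal projection onto the one-dimensional orthogonal complement of $\Span\{X_i : i \neq j\}$, spanned by a unit normal vector $w_j$, so $d_j = |\langle X_j, w_j\rangle|$. The point is that $w_j$ is orthogonal to every $X_i$ with $i\neq j$, in particular to $X_{L+1},\dots,X_{j-1},X_{j+1},\dots,X_n$, so $w_j \in V_{L,j}$. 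Hence $\langle X_j, w_j\rangle = \langle \pi_{L,j}(X_j), w_j\rangle$, and since $\pi_{L,j}(X_j)$ and the $\pi_{L,j}(X_i)$ for $i \le L$ all lie in the finite-dimensional space $V_{L,j}$, the entire computation reduces to a statement about vectors in $V_{L,j}$: we must bound from below $|\langle \pi_{L,j}(X_j), w_j\rangle|$, where $w_j \in V_{L,j}$ is the unit vector orthogonal (inside $V_{L,j}$) to $\pi_{L,j}(X_1),\dots,\pi_{L,j}(X_L)$.

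The key step is then the following elementary geometric fact in a Euclidean/Hermitian space $W = V_{L,j}$: if $u_1,\dots,u_L, z \in W$ and $w$ is a unit vector orthogonal to $u_1,\dots,u_L$, and if we also know $\langle z, w\rangle$ relates to $\dist(z, \Span(u_1,\dots,u_L))$, then
$$\dist\big(z,\ \Span(u_1,\dots,u_L)\big) \ \ge\ \frac{|z|}{1 + \sum_{i=1}^L |u_i|/d_i}.$$
More precisely, I would argue as follows. Let $z := \pi_{L,j}(X_j)$ and $u_i := \pi_{L,j}(X_i)$ for $1 \le i \le L$. Since $w_j \perp u_i$ for all $i \le L$, we have $\dist(z, \Span(u_1,\dots,u_L)) \ge |\langle z, w_j\rangle| = d_j$ — actually one checks equality is not needed, only this inequality direction, but in fact one can also get the reverse; what matters is $d_j = |\langle z, w_j\rangle|$. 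Now write $z = z_\parallel + z_\perp$ where $z_\parallel \in \Span(u_1,\dots,u_L)$ and $z_\perp$ is orthogonal to it; then $d_j = |z_\perp|$ and $|z|^2 = |z_\parallel|^2 + |z_\perp|^2$, so $|z_\parallel| \le |z|$ and $|z_\perp| \ge |z| - |z_\parallel|$ is useless directly. Instead: write $z_\parallel = \sum_{i=1}^L c_i u_i$. The crucial observation is that pairing with the unit normal $w_i$ to $\Span\{X_k : k\neq i\}$ inside the ambient space isolates $c_i$: since $w_i \perp X_k$ for $k \neq i$, and since $z_\parallel - z = -z_\perp \perp \Span(u_1,\dots,u_L)$, careful bookkeeping of which vectors $w_i$ annihilates yields $|c_i| \le d_j / d_i$ (here $d_i = \dist(X_i,V_i)$ is the distance in the ambient space, and this is where the $1/d_i$ factors enter). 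Then $|z_\parallel| \le \sum_i |c_i| |u_i| \le d_j \sum_i |u_i|/d_i$, and combining with $|z| \le |z_\parallel| + |z_\perp| = |z_\parallel| + d_j$ gives $|z| \le d_j(1 + \sum_i |u_i|/d_i)$, which rearranges to the claim.

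The main obstacle I anticipate is the bound $|c_i| \le d_j/d_i$ on the coefficients in the expansion $z_\parallel = \sum c_i u_i$; this is the heart of the lemma and is exactly where the distances $d_i$ and their "correlation" enter. The clean way to see it is to pass to the dual picture: the coefficient $c_i$ in the expansion of the $V_{L,j}$-projection of $X_j$ along the $V_{L,j}$-projections of $X_1,\dots,X_L$ can be read off as an inner product of $X_j$ with a suitable vector in the span of $X_1,\dots,X_L$ together with a normal direction, and Cramer's-rule-type identities (or equivalently the block structure of $A^{-1}$, as in Lemma \ref{proj0}) express this ratio of determinants; bounding it reduces, via the Gram matrix, to the distances $d_i$. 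I would set this up by working with the $(L+1)$-dimensional subspace $\Span(u_1,\dots,u_L, w_j)$ of $V_{L,j}$ (a space of dimension $L+1$), restricting attention there, and invoking the $s = 1$ and general cases of Lemma \ref{proj0} appropriately — or, more self-containedly, by a direct induction on $L$ peeling off one vector $X_i$ at a time and tracking how $\dist(X_j, \cdot)$ degrades by at most a factor involving $|\pi(X_i)|/d_i$ at each step. Everything else — the reduction to $V_{L,j}$, the triangle inequality, the final rearrangement — is routine linear algebra.
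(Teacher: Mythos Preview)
Your reduction to the $(L+1)$-dimensional space $V_{L,j}$ is exactly the paper's first move: the unit normals $w_i$ for $i \in \{1,\dots,L,j\}$ all lie in $V_{L,j}$, so each $d_i$ coincides with the distance computed entirely inside $V_{L,j}$, and one may assume $n = L+1$, $j = n$. From there your route diverges from the paper's. You decompose $z = \pi_{L,j}(X_j)$ as $z_\parallel + z_\perp$ with $z_\parallel = \sum_i c_i u_i$, aim for the coefficient bound $|c_i| \le d_j/d_i$, and finish via $|z| \le |z_\parallel| + |z_\perp|$. The paper instead passes to the dual basis $R_1,\dots,R_n$ (rows of $A^{-1}$, so $d_i = 1/|R_i|$), writes down the explicit identity
\[
R_n \;=\; \frac{X_n}{|X_n|^2} \;-\; \sum_{i=1}^{n-1} \frac{X_i \cdot X_n}{|X_n|^2}\, R_i
\]
(verified by checking pairings with $X_1,\dots,X_n$), and applies the triangle inequality and Cauchy--Schwarz to this one line. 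Your approach is equally valid and perhaps more geometric; the paper's is slicker because the coefficients $\tfrac{X_i \cdot X_n}{|X_n|^2}$ are \emph{found} rather than bounded, so no separate coefficient estimate is needed.

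You flag $|c_i| \le d_j/d_i$ as the ``main obstacle'' and reach for Cramer's rule, block inverses, or induction on $L$; this is overcomplication, and the pairing you already mentioned settles it in two lines. Take the inner product of $z = z_\parallel + z_\perp$ with $w_i$ (for $i \le L$). Since $w_i \in V_{L,j}$ and $w_i \perp X_j$, one has $\langle z, w_i\rangle = \langle X_j, w_i\rangle = 0$. Since $w_i \perp X_k$ for $k \neq i$ (hence $w_i \perp u_k$ for $k \le L$, $k \neq i$), one has $\langle z_\parallel, w_i\rangle = c_i \langle u_i, w_i\rangle$, whose modulus is $|c_i|\, d_i$. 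Therefore $|c_i|\, d_i = |\langle z_\perp, w_i\rangle| \le |z_\perp| = d_j$, which is exactly your claimed bound. With this in place your argument is complete; no Cramer's rule or induction is needed.
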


We are going to prove this lemma in Appendix
\ref{section:correlation}.

Fix $L = \lfloor n^{20/C_0} \rfloor$ and $t := n^{-21/C_0}$; then (if $C_0$ is sufficiently large compared with $c$) we have
\begin{equation}\label{ltail}
\P( \min_{1 \leq i \leq L} d_i \geq n^{-21/C_0} ) \geq 1 - O(n^{-1/C_0}).
\end{equation}
To control the other distances $d_j$ for $L < j \leq n$, we use Lemma \ref{Relation}, which gives the lower bound
\begin{equation}\label{tl}
d_j \geq \frac{ | \pi_{L,j}(X_j) | }{ 1 + \sum_{i=1}^L |\pi_{L,j}(X_i)|/d_i }
\end{equation}
where $\pi_{L,j}$ is the orthogonal projection to the $L+1$-dimensional space $V_{L,j}$ orthogonal to $X_{L+1},\ldots,X_{j-1},X_{j+1},\ldots,X_n$.

From Lemma \ref{corup0} and \eqref{atoy}, we have
$$ \P( |\pi_{L,j}(X_i) - \sqrt{L+1}| \geq \sqrt{L+1}/2 ) \ll \exp( - n^{1/C_0} )$$
(say) for all $1 \leq i \leq L < j \leq n$.  By the union bound, we thus have
\begin{equation}\label{utail}
\P( |\pi_{L,j}(X_i) - \sqrt{L+1}| \leq \sqrt{L+1}/2 \hbox{ for all } 1 \leq i \leq L < j \leq n ) \geq 1 - O(n^{-1/C_0})
\end{equation}
(say).    If the events in \eqref{ltail}, \eqref{utail} both occur, then we conclude from \eqref{tl} that
$$ d_j \gg n^{-41/C_0}$$
for all $L < j \leq n$, and \eqref{pmin} follows.  The proof of Proposition \ref{tailbound0} is complete.

\section{Extensions of Theorem \ref{main-thm}} \label{section:extension}

In this section, we describe several extension of Theorem \ref{main-thm}, mentioned briefly at the end of the introduction.  As an application, we show that  our results  determine the 
distribution of the condition number of $M_{n} (\a)$, extending results of Edelman from \cite{edelman}.

\subsection{Joint distribution of least singular
values}\label{joint-sec}

The proof of Theorem \ref{main-thm} can be adapted to control the joint distribution of the bottom $k$ singular values of $M_n(\a)$, for $k$ a small power of $n$.  More precisely, define $\Lambda_{k,n}(\a) \in (\R^+)^k$ to be the random variable
$$ \Lambda_{k,n}(\a) := ( n \sigma_n( M_n( \a ) )^2, \ldots, n \sigma_{n-k+1}( M_n(\a) )^2 ).$$
Then a routine modification of the proof of Theorem \ref{main-thm} gives

\begin{theorem}[Universality for $\Lambda_{k,n}(\a)$]\label{main-thm2}  Let $F$ be the real or complex field, let $\a$ be $F$-normalized, and suppose $\E |\a|^{C_0} < \infty$ for some sufficiently large absolute constant $C_0$.  If $c>0$ is a sufficiently small absolute constant, then we have
$$
\P( \Lambda_{k,n}(\g_F) \in \Omega \backslash \partial_{n^{-c}} \Omega ) - n^{-c}
\leq
 \P( \Lambda_{k,n}(\a) \in \Omega ) \leq  \P( \Lambda_{k,n}(\g_F) \in \Omega \cup \partial_{n^{-c}} \Omega ) +n^{-c}$$
for all $1 \leq k \leq n^c$ and all measurable $\Omega \in F^k$, where $\partial_{n^{-c}}\Omega$ is the set of all points in $F^k$ which lie within
$n^{-c}$ (in $l^\infty$ norm) of the topological boundary of $\Omega$.
\end{theorem}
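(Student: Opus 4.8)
The plan is to run the entire argument from Section \ref{overview} essentially verbatim, replacing the one-dimensional statistic $n\sigma_n(M_n(\a))^2$ by the $k$-dimensional vector $\Lambda_{k,n}(\a)$, and keeping track of the fact that all the linear-algebraic identities we used already come in a ``bottom $k$'' form. Concretely, recall that Lemma \ref{proj0} was stated not just for the single singular value $\sigma_1(B)$ but for all $\sigma_j(B)$ at once: it gives $\sigma_j(B) = \sigma_{s-j+1}(M)^{-1}$ for $1 \le j \le s$. Hence the bottom $k$ singular values of $M_n(\a)$ are, up to the sampling error from Lemma \ref{rs0}, controlled by the top $k$ singular values of $B_{s,n}(\a)$, which in turn are exactly the bottom $k$ singular values of $M_{s,n}(\a)$. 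So the reduction
\[
\Lambda_{k,n}(\a) \approx \left( s\,\sigma_s(M_{s,n}(\a))^2, \ldots, s\,\sigma_{s-k+1}(M_{s,n}(\a))^2 \right)
\]
goes through with only cosmetic changes, provided $k < s = \lfloor n^{500/C_0}\rfloor$, which is guaranteed by the hypothesis $k \le n^c$ once $c$ is small enough.

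First I would re-examine the sampling step. Lemma \ref{rs0} bounds $\|A^*A - \frac ns B^*B\|_F^2$ in expectation, hence by Hoffman--Wielandt (Lemma \ref{hw}) it controls $\sum_j |\sigma_j(A^{-1})^2 - \frac ns\sigma_j(B)^2|^2$, and in particular each of the bottom $k$ (equivalently, under inversion, each of the top $k$ of $A^{-1}$) with the same $O(n^{-100/C_0} n^2)$-type bound on the event $\CE_1$. This is uniform in $j$, so handling $k \le n^c$ of them simultaneously costs at most a factor $n^c$ in a union bound, which is absorbed into the exponents exactly as before. Next I would redo the Berry--Ess\'een step: Proposition \ref{berry-esseen-frame0} is already stated for an arbitrary measurable set $\Omega \subset F^N$ with $N = s^2$, so no change is needed there — we simply take $\Omega$ to be the preimage, under the map sending an $s \times s$ matrix to its vector of bottom $k$ scaled squared singular values, of the given set in $F^k$ (or rather its $\eps$-neighborhoods). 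The delocalization Proposition \ref{v-nondeg0} is a statement purely about $V_{s,n}(\a)$ and is untouched. The upshot, as in \eqref{nna}--\eqref{nnb}, is a two-sided comparison between $\Lambda_{k,n}(\a)$ and $\Lambda_{k,s}(\g_F) = \Lambda_{k,n}$-analogue for the $s \times s$ gaussian matrix, with $O(n^{-c})$ errors in probability and $O(n^{-c})$-fattening of $\Omega$ (the fattening in $l^\infty$ rather than a single coordinate is exactly what the statement of Theorem \ref{main-thm2} asks for, and it matches the $\dist_\infty$ in Proposition \ref{berry-esseen-frame0}).

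The one place where genuine thought is required, rather than bookkeeping, is the final ``self-bootstrapping'' step. In the one-dimensional case we compared $M_n(\a)$ with $M_s(\g_F)$, then with $M_{2n}(\a)$, iterated, and finally invoked the \emph{explicit} gaussian limit from Theorem \ref{edel-thm} (via Forrester's Bessel-kernel computation for the joint law, deferred in this section) to identify the limit. For the joint distribution one must check two things: (i) that the limiting joint law of $\Lambda_{k,n}(\g_F)$ exists and is given by the Bessel-kernel point process — this is the content of Forrester's result \cite{forrester}, so it is a black box just like Theorem \ref{edel-thm} was; and (ii) that the Cauchy-sequence-in-L\'evy-metric argument still closes, now in $F^k$ with $k$ growing. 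Here the subtlety is that the $l^\infty$-fattening constants accumulate over the iteration, and with $k$ slowly growing one must make sure the telescoping of the $t_{++}, t_{--}$-type corrections (now a vector of corrections, one per coordinate, each of size $O(n^{-30/C_0})$) stays within the $O(n^{-c})$ budget; since there are only $O(\log n)$ doublings and $k \le n^c$ coordinates, and each correction is polynomially small with a much larger exponent, this is fine once $C_0$ is large relative to the implied constants — but it is the step I would write out most carefully.

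I expect the main obstacle to be purely expository: organizing the ``routine modification'' so that it is genuinely clear the Hoffman--Wielandt and Berry--Ess\'een inputs apply coordinatewise (uniformly in $j \le k$), and that the growth $k \le n^c$ is slow enough that every union bound and every accumulated fattening constant is swallowed by choosing $c$ small and $C_0$ large. No new analytic idea is needed beyond what is already in Section \ref{overview}; the content of Theorem \ref{main-thm2} is that the machinery was robust enough to track a vector rather than a scalar, together with citing Forrester for the gaussian endpoint.
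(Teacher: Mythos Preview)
Your plan is correct and matches the paper's own argument, which is literally to rerun Section~\ref{overview} tracking the vector $\Lambda_{k,n}$ instead of the scalar, picking up $k^{O(1)}$ losses that are absorbed by taking $c$ small and $C_0$ large. Your identification of the key inputs---that Lemma~\ref{proj0}, Lemma~\ref{hw}, and Proposition~\ref{berry-esseen-frame0} are already stated in full-spectrum or arbitrary-$\Omega$ form, and that Proposition~\ref{v-nondeg0} is untouched---is exactly right.

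One simplification you are missing: for Theorem~\ref{main-thm2} as stated you need neither Forrester's result nor the doubling iteration \eqref{t1}--\eqref{t2}. The theorem compares $\Lambda_{k,n}(\a)$ with $\Lambda_{k,n}(\g_F)$ at the \emph{same} $n$; it does not mention a limiting law. Once you have the vector analogue of \eqref{nna}--\eqref{nnb}, namely that $\Lambda_{k,n}(\a)$ is close to $\Lambda_{k,s}(\g_F)$, simply apply the identical argument with $\a$ replaced by $\g_F$ to get that $\Lambda_{k,n}(\g_F)$ is also close to $\Lambda_{k,s}(\g_F)$, and then combine the two by the triangle inequality in the L\'evy-type metric. (For the gaussian input the Berry--Ess\'een step is even exact, since $M_{s,n}(\g_F)$ and $M_s(\g_F)$ have the same distribution.) The iteration in Section~\ref{overview} was used only to extract the explicit density $f$; Theorem~\ref{main-thm2} is deliberately phrased so as not to need it, in the spirit of Remark~\ref{remark:main}. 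This shortcut also dissolves the difficulty you correctly flag about running a Cauchy-sequence argument in $F^k$ with $k$ growing in $n$: everything now happens at the single pair of scales $(n,s)$, and no limit is taken.
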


Indeed, one simply repeats the arguments for Theorem \ref{main-thm} (which is basically the $k=1$ case) but acquires losses of $k^{O(1)}$ throughout the argument, which will be acceptable if we force $k$ to be a sufficiently small power of $n$, and if one shrinks $c$ as necessary; we omit the details.

Now we focus on the case when $k=O(1)$ is independent of $n$, then Theorem \ref{main-thm2} asserts that $\Lambda_{k,n}(\a)$ converges in the ($k$-dimensional) L\'evy metric to a limiting distribution on $F^k$.  The precise value of this limiting distribution is complicated to state, but can in principle be computed from the following asymptotics of Forrester \cite{forrester} in the complex case $a=\g_\C$:

\begin{theorem}[Bessel kernel asymptotics at the hard edge, complex case]\label{comp}\cite{forrester} Let $m \geq 1$ be an integer, and let $f \in L^\infty(\R^m)$ be a symmetric function. Then the expression
$$ \E \sum_{1 \leq i_1 < \ldots < i_m \leq n} f( 4n \sigma_{i_1}(M_n(\g_\C))^2, \ldots, 4n \sigma_{i_m}(M_n(\g_F))^2 )$$
converges as $n \to \infty$ to the limit
$$ \int_{\R^n} f(t_1,\ldots,t_m) \det( K_0(t_i,t_j) )_{1 \leq i,j \leq m}\ dt_1 \ldots dt_m$$
where $K_0$ is the \emph{Bessel kernel}
$$ K_0(x,y) := \frac{J_0(x^{1/2}) y^{1/2} J_{1}(y^{1/2}) - J_1(y^{1/2}) x^{1/2} J_{0}(x^{1/2})}{2(x-y)},$$
where $J_\nu$ is the Bessel function
$$ J_\nu(x) = \frac{1}{2\pi} \int_{-\pi}^\pi e^{-i(\nu t - x \sin t)}\ dt.$$
\end{theorem}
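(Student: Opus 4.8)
The plan is to derive Theorem \ref{comp} from the determinantal structure of the complex Laguerre ensemble together with a hard-edge asymptotic analysis of its correlation kernel. In the case $\a = \g_\C$, formula \eqref{jointcomplex} exhibits the squared singular values $\lambda_i := \sigma_i(M_n(\g_\C))^2$ as the orthogonal polynomial ensemble attached to the Laguerre weight $w(\lambda) = e^{-\lambda/2}$ on $[0,\infty)$; hence (by the standard theory of such ensembles, see e.g.\ \cite{Mehta}) $\{\lambda_i\}$ is a determinantal point process whose $m$-point correlation functions are $\rho^{(n)}_m(\lambda_1,\dots,\lambda_m)=\det(K_n(\lambda_i,\lambda_j))_{1\le i,j\le m}$, where $K_n(x,y)=\sqrt{w(x)w(y)}\sum_{k=0}^{n-1}p_k(x)p_k(y)$ is the Christoffel--Darboux kernel built from the orthonormal Laguerre polynomials $p_k$. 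First I would use the elementary identity that expresses the expectation of the ordered sum of a symmetric test function over the points of a determinantal process in terms of the $\rho^{(n)}_m$, to rewrite the left-hand side of the theorem (up to an explicit combinatorial prefactor) as $\int f(4n x_1,\dots,4n x_m)\,\det(K_n(x_i,x_j))\,dx_1\cdots dx_m$. Then I would substitute $t_i=4n x_i$: the Jacobian $(4n)^{-m}$ is absorbed row by row into the rescaled kernel $\widehat K_n(u,v):=\tfrac1{4n}K_n(\tfrac{u}{4n},\tfrac{v}{4n})$, so the integral becomes $\int f(t_1,\dots,t_m)\,\det(\widehat K_n(t_i,t_j))\,dt_1\cdots dt_m$.

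The analytic core is then the limit $\widehat K_n(x,y)\to K_0(x,y)$, locally uniformly on $(0,\infty)^2$, with $K_0$ the Bessel kernel of the statement. I would establish this via the classical Plancherel--Rotach-type asymptotics of the Laguerre polynomials at the hard edge: after the substitution $\lambda=u/(4n)$, the functions $\sqrt{w}\,p_k$ with $k$ close to $n$, suitably normalized, converge to expressions in the Bessel function $J_0(\sqrt u)$; inserting these into the Christoffel--Darboux formula and letting $n\to\infty$ produces $J_0$ together with its derivative $J_0'=-J_1$, and the telescoping built into Christoffel--Darboux assembles them into $K_0(x,y)$. Equivalently one can run a Riemann--Hilbert steepest-descent analysis for the Laguerre weight and read off the hard-edge parametrix, which is constructed out of Bessel functions; either route is standard and is the content of \cite{forrester}.

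Given the kernel limit, the last step is to pass to the limit inside the $m$-fold integral. Here I would invoke dominated convergence: each matrix $(\widehat K_n(t_i,t_j))_{1\le i,j\le m}$ is positive semidefinite (a principal minor of a rank-$n$ self-adjoint reproducing kernel), so Hadamard's inequality gives $|\det(\widehat K_n(t_i,t_j))|\le \prod_i \widehat K_n(t_i,t_i)$, and classical bounds for the Laguerre kernel yield $\widehat K_n(t,t)\ll t^{-1/2}+1$ uniformly in $n$ on any fixed compact set; since $t^{-1/2}$ is locally integrable near $0$, this dominates and the limit passes through on the region where $f$ is supported (one tacitly takes $f$ to have enough decay for the right-hand integral to converge, e.g.\ $f$ compactly supported). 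Keeping track of the combinatorial prefactor and the $(4n)$-change of variables then gives exactly the stated formula.

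The main obstacle is not the pointwise Bessel limit --- which is classical --- but obtaining it with enough uniformity: one needs uniform-on-compacts convergence of $\widehat K_n$ together with the uniform integrable majorant $\widehat K_n(t,t)\ll t^{-1/2}+1$, and both require delicate control of the Laguerre polynomials in the transition regime $k\approx n$, $\lambda\approx 1/n$, which is exactly where the Plancherel--Rotach / Riemann--Hilbert analysis is doing its real work.
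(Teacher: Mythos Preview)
The paper does not prove Theorem \ref{comp} at all: it is stated with the citation \cite{forrester} and used as a black box in Section \ref{section:extension}. So there is no ``paper's own proof'' to compare against; Tao and Vu simply import this result from Forrester.

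That said, your sketch is essentially the standard argument and is, in outline, what \cite{forrester} does: exploit the determinantal structure of the complex Laguerre ensemble (coming from \eqref{jointcomplex}), write the $m$-point correlations as $\det(K_n(\lambda_i,\lambda_j))$ with $K_n$ the Christoffel--Darboux kernel for the Laguerre weight, rescale at the hard edge, and use the classical Bessel asymptotics of Laguerre polynomials near the origin to identify the limit as $K_0$. Your identification of the two technical pressure points---locally uniform convergence of the rescaled kernel and an integrable majorant on the diagonal---is accurate. One small caveat: as stated in the paper, $f$ is only assumed to lie in $L^\infty(\R^m)$, so your remark that ``one tacitly takes $f$ to have enough decay'' is really an additional hypothesis (compact support, or sufficient decay) without which neither side need be finite; this is a statement-level looseness rather than a flaw in your argument.
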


In the real case $a = \g_\R$, there is a similar formula, but $K_0$ is a now a $2 \times 2$ matrix-valued kernel, also defined in terms of Bessel functions, which is somewhat more complicated to state; see \cite{nagao} for details.

As mentioned above, these $k$-point correlation asymptotics are, in principle, sufficient to reconstruct the asymptotic distribution of $\Lambda_{k,n}(\a)$.  For instance, this is done for $k=1$ in \cite{forrester2} (in particular, recovering the results in Theorem \ref{edel-thm}), and for $k=2$ and $F=\C$ in \cite{forrester3}. However, the computations rapidly increase in complexity as $k$ increases, and a closed-form expression for the limit in the general $k$ case does not appear to currently be in the literature.  On the other hand, one can apply Theorem \ref{main-thm2} directly to results such as Theorem \ref{comp} to conclude that the same asymptotics are in fact valid for all $\C$-normalized $\a$ (and similarly that the asymptotics in \cite{nagao} are valid for all $\R$-normalized $\a$); we omit the details.

\begin{figure}
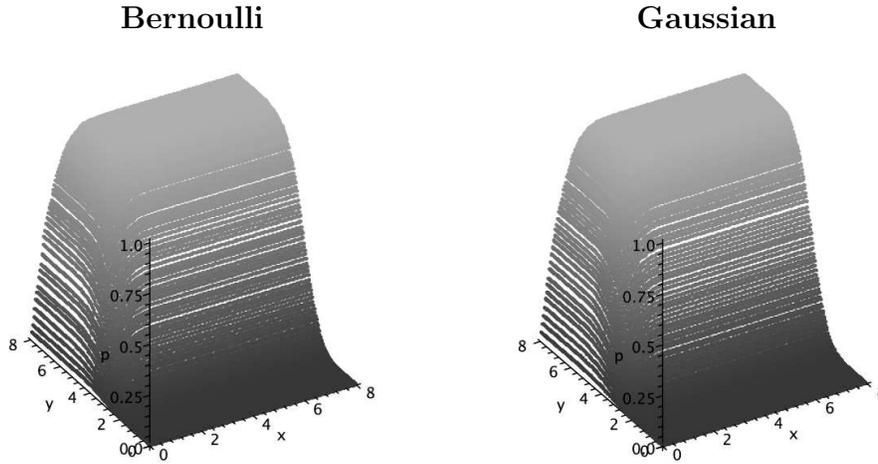

\centerline{{\bf Bernoulli}\qquad\qquad\qquad\qquad\qquad\qquad {\bf Gaussian}}
\begin{center}
\scalebox{.3}{\includegraphics{3d_plot_1st_2nd_smallest_B_smaller}}%
\qquad\scalebox{.3}{\includegraphics{3d_plot_1st_2nd_smallest_G_smaller}}
\end{center}
\caption{Plotted above is the surface $\Pr( \sqrt{n} \sigma_n(M_n(\xi)) \le x \mbox{ and } \sqrt{n} \sigma_{n-1}(M_n(\xi)) \le y)$, based on data from 1000 randomly generated matrices with $n=100$.   Lighter shading indicates higher probability (on the vertical axis).  The surface on the left was generated with $\xi$ a random Bernoulli
variable, taking the values $+1$ and $-1$ each with probability $1/2$; and surface on the right was generated with $\xi$ a
gaussian normal random variable. } 
\label{fig3_3d}
\end{figure}
  
Figure~\ref{fig3_3d} shows the joint distribution of the smallest two singular values for Bernoulli and gaussian distributions.

\begin{figure}
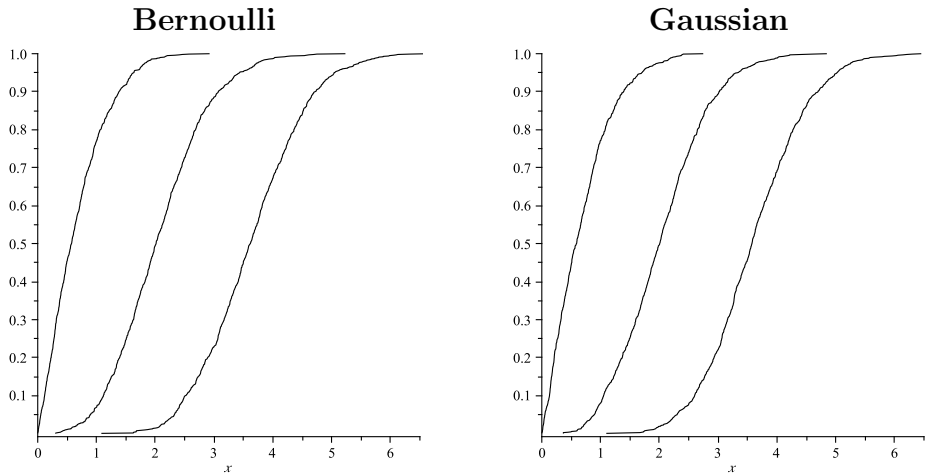

\centerline{{\bf Bernoulli}\qquad\qquad\qquad\qquad\qquad\qquad {\bf Gaussian}}
\begin{center}
\scalebox{.3}{\includegraphics{Ber_123}}%
\qquad\scalebox{.3}{\includegraphics{Gau_123}}
\end{center}\caption{Plotted above are the curves $\Pr( \sqrt{n} \sigma_{n-k} (M_n(\xi)) \le x)$, for $k=0,1,2$ based on data from 1000 randomly generated matrices with $n=100$.  The curves on the left were generated with $\xi$ a random Bernoulli
variable, taking the values $+1$ and $-1$ each with probability $1/2$; and curves on the right were generated with $\xi$ a
gaussian normal random variable.  In both cases, the curves from left to right correspond to the cases $k=0,1,2$, respectively. } 
\label{fig4_2d}
\end{figure}

Figure~\ref{fig4_2d} plots the functions $\Pr( \sqrt{n}\sigma_{n-k} (M_n(\xi)) \le x)$, where $k=0,1,2$, for Bernoulli and gaussian distributions.
%here
%{\bf To Phil: Can you have  a  picture here, comparing the 2nd or 3rd smallest singular values. We can have the smallest,  2nd, 3rd smallest of Bernoulli on one side and Gaussian on the other. I dont know how hard it is to have 3D picture, for instance for the joint distribution of say 2 singular values. } 

\subsection{Rectangular matrices}

We can use our method to consider the least singular value of a
rectangular matrix of the dimension $(n-l) \times n$, where $l$ is
a constant.   In fact, we can reduce this problem to the square
case by adding a (dummy) set of $l$ rows vectors which form an
orthonormal complement of the $n-l$ row vectors of the matrix. We
consider these dummy vectors as the first $l$ vectors of the (now
square) matrix and repeat the proof. The additional vectors remain
the same after the projection and by removing them we obtain an
$(s-l) \times s $ matrix which is asymptotically gaussian. Thus,
we will be able to compare the least singular value of the
original matrix with this one.

The limiting  distribution of the least singular value of  $(n-l) \times n $  random matrices with gaussian entries  (for $l$ constant) can be computed, at least in principle. However, we cannot find a reference for this. Thus, we are going to phrase the result here in the spirit of 
Remark  \ref{remark:main}.  To this end,  $M_{n-l,n} (\a)$ denotes the 
$(n-l) \times n $ random matrix whose entries are iid copies of $\a$. 

\begin{theorem}[Universality for the least singular value of rectangular matrices]
 Let $\a$ be $\R$- or $\C$-normalized,
 and suppose $\E |\a|^{C_0} < \infty$ for some sufficiently large absolute constant $C_0$.
 Let $l$ be a constant. 
 Let $X:= n \sigma_{n-l} M_{n-l,n} (\a)^{2}$, $X_{\g_{\R}} :=  n \sigma_{n-l} M_{n-l,n} (\g_{R})^{2}$
 and  $X_{\g_{\C}} :=  n \sigma_{n-l} M_{n-l,n} (\g_{C})^{2}$. Then there is a constant $c>0$ such that  for any $t \ge 0$
 
 $$\P( X \leq t - n^{-c}  ) - n^{-c} \leq \P( X_{\g_{R}} \leq t ) \leq \P( X  \leq t + n^{-c}  ) + n^{-c}$$
 
 if $\a$ is $\R$-normalized and 
 
 $$\P( X \leq t - n^{-c} ) - n^{-c}\leq \P( X_{\g_{\C}} \leq t ) \leq \P( X \leq t + n^{-c} ) + n^{-c}$$
 
 if $\a$ is $\C$-normalized. 
 \end{theorem}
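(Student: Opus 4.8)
The plan is to reduce the rectangular case to the square case already handled by Theorem \ref{main-thm} (and Remark \ref{remark:main}), exactly along the lines sketched in the text preceding the statement. Given an $(n-l) \times n$ matrix $M_{n-l,n}(\a)$ with rows $Y_{l+1}, \ldots, Y_n$, I would first augment it to an $n \times n$ matrix $\tilde M_n$ by prepending $l$ rows $Y_1, \ldots, Y_l$ chosen to be an orthonormal basis of the orthogonal complement of $\Span(Y_{l+1},\ldots,Y_n)$ in $F^n$ (generically of dimension $l$; this is where we use that $\a$ is continuous, as in the reduction in Section \ref{overview}). The key elementary observation is that $\sigma_{n-l}(M_{n-l,n}(\a))$ is recovered from the spectrum of $\tilde M_n$: since the new rows are unit vectors orthogonal to the span of the old ones, one checks (by the same orthogonal-decomposition argument used in Lemma \ref{proj0}) that the nonzero singular values of $M_{n-l,n}(\a)$ together with $l$ copies of $1$ are exactly the singular values of $\tilde M_n$, so in particular $\sigma_{n-l}(M_{n-l,n}(\a)) = \sigma_n(\tilde M_n)$ on the event (of probability one in the continuous case) that the latter is $< 1$.

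Next I would run the machinery of Section \ref{overview} on $\tilde M_n$ in place of $M_n(\a)$. The point is that the first $l$ columns $\tilde X_1, \ldots, \tilde X_l$ of $\tilde M_n$ are deterministic functions of the iid columns coming from $M_{n-l,n}(\a)$, but the remaining $n - l$ columns still have iid $\a$-entries, and $l$ is a fixed constant. One writes $\sigma_n(\tilde M_n) = \sigma_1(\tilde M_n^{-1})^{-1}$, samples $s := \lfloor n^{500/C_0}\rfloor$ rows of $\tilde M_n^{-1}$, applies the random sampling Lemma \ref{rs0} together with the tail bound (Proposition \ref{tailbound0}, whose proof goes through with the obvious modifications since only constantly many columns are non-random), and then applies the Projection Lemma \ref{proj0}. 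After projecting onto the $s$-dimensional space $V$ orthogonal to columns $\tilde X_{s+1}, \ldots, \tilde X_n$, the first $l$ projected columns $\pi(\tilde X_1), \ldots, \pi(\tilde X_l)$ remain fixed vectors, while $\pi(\tilde X_{l+1}), \ldots, \pi(\tilde X_s)$ are, after the non-degeneracy of $V$ (Proposition \ref{v-nondeg0}) and the Berry–Ess\'een-type central limit theorem for frames (Proposition \ref{berry-esseen-frame0}), asymptotically distributed like columns of $M_s(\g_F)$. So $M_{s,n}(\tilde M_n)$ is an $s \times s$ matrix whose last $s - l$ columns are asymptotically gaussian and whose first $l$ columns are whatever the projection produced — equivalently, one compares with an $(s-l) \times s$ gaussian rectangular matrix in the same way, removing the $l$ dummy vectors. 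The Hoffman–Wielandt / Lipschitz stability of $\sigma_s$ (Lemma \ref{hw}) then yields the comparison $\sigma_{n-l}(M_{n-l,n}(\a)) \approx \sigma_{n-l}(M_{n-l,n}(\g_F))$ in the L\'evy metric, with a polynomial rate $O(n^{-c})$.

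Finally, because there is no reference for the explicit limiting law of $\sigma_{n-l}(M_{n-l,n}(\g_F))$, I would phrase the conclusion exactly as in Remark \ref{remark:main}: rather than passing to an explicit limiting distribution, I iterate the comparison bounds along the sequence $n, 2n, 4n, \ldots$ — noting that the Edelman-type input used in Section \ref{overview} is replaced here by the mere existence of a limit, which follows from the Cauchy-in-L\'evy-metric property of the truncated-and-renormalized sequence $n\sigma_{n-l}(M_{n-l,n}(\a))^2$ produced by the iteration. This gives the stated two-sided inequality with $X_{\g_F}$ playing the role of the (field-dependent) universal limit. The main obstacle I anticipate is bookkeeping rather than conceptual: verifying that every lemma invoked in Section \ref{overview} — in particular Proposition \ref{tailbound0} (the tail bound on $|R_i|$, via Lemma \ref{Relation} on correlated distances) and Proposition \ref{v-nondeg0} — survives the presence of $l$ deterministic columns. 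Since $l = O(1)$, the arguments lose at most constant factors and the deterministic columns only shift constantly many indices, so no essential difficulty arises; one just has to confirm, e.g., that $\pi(\tilde X_i)$ for $i \le l$ has norm $O(\sqrt{s})$ with overwhelming probability (which follows from $\|\pi\|_{op} \le 1$ and $|\tilde X_i| = 1$) so that it does not spoil the stability estimates.
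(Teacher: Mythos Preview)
Your approach is essentially the paper's own, but there is a row/column slip that, as written, breaks the argument. You prepend $l$ dummy \emph{rows} $Y_1,\ldots,Y_l$ to $M_{n-l,n}(\a)$, and then assert that ``the first $l$ columns $\tilde X_1,\ldots,\tilde X_l$ of $\tilde M_n$ are deterministic functions of the iid columns $\ldots$, but the remaining $n-l$ columns still have iid $\a$-entries.'' This is false: adding rows changes the first $l$ entries of \emph{every} column, so no column of $\tilde M_n$ has iid $\a$-entries, and moreover the columns are no longer independent (the dummy-row entries of column $j$ depend on all the original rows, hence on every other column). In particular, after conditioning on $\tilde X_{s+1},\ldots,\tilde X_n$ you cannot apply Proposition~\ref{berry-esseen-frame0} to $\tilde X_1,\ldots,\tilde X_s$ as claimed.

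The fix is simply to pass to the transpose before running Section~\ref{overview}: since $\sigma_n(\tilde M_n)=\sigma_n(\tilde M_n^{\,T})$, work with $\tilde M_n^{\,T}$, whose \emph{columns} are now $Y_1,\ldots,Y_n$. The first $l$ columns are the orthonormal dummies and the last $n-l$ are genuinely iid $\a$-vectors in $F^n$. Taking the sample to be columns $1,\ldots,s$ (using exchangeability of $Y_{l+1},\ldots,Y_n$ to fix the random part as $l+1,\ldots,s$, and noting the $l$ extra rows $R_1,\ldots,R_l$ of the inverse satisfy $R_i=Y_i$, $|R_i|=1$, contributing only $O(n/s)$ to the sampling error), the projection $\pi$ onto $V=(\Span(Y_{s+1},\ldots,Y_n))^\perp$ is independent of $Y_{l+1},\ldots,Y_s$, and since $Y_1,\ldots,Y_l\perp Y_{s+1},\ldots,Y_n$ one has $Y_1,\ldots,Y_l\in V$, so $\pi(Y_i)=Y_i$ for $i\le l$---this is precisely the paper's ``the additional vectors remain the same after the projection.'' Because the dummies are orthogonal to $Y_{l+1},\ldots,Y_s$ as well, $M_{s,n}^*M_{s,n}$ is block-diagonal $\begin{pmatrix} I_l & 0\\ 0 & G^*G\end{pmatrix}$ with $G$ the $s\times(s-l)$ matrix of columns $\pi(Y_{l+1}),\ldots,\pi(Y_s)$; removing the dummies leaves exactly the $(s-l)\times s$ approximately-gaussian matrix the paper describes. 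With this correction your outline matches the paper's and the remaining verifications are indeed routine.
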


\begin{figure}
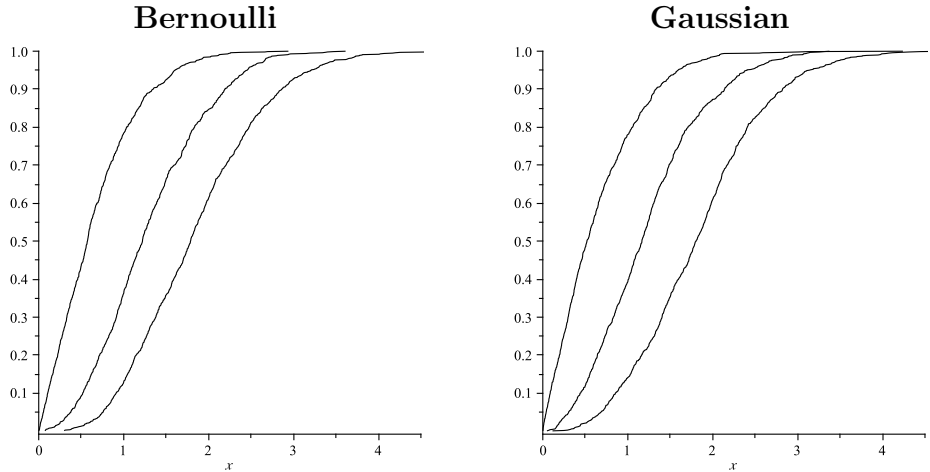

\centerline{{\bf Bernoulli}\qquad\qquad\qquad\qquad\qquad\qquad {\bf Gaussian}}
\begin{center}
\scalebox{.3}{\includegraphics{Ber_nm012}}%
\qquad\scalebox{.3}{\includegraphics{Gau_nm012}}
\end{center}\caption{Plotted above are the curves $\Pr( \sqrt{n} \sigma_{n-l} (M_{n-l}(\xi)) \le x)$, for $l=0,1,2$ based on data from 1000 randomly generated matrices with $n=100$.  The curves on the left were generated with $\xi$ a random Bernoulli
variable, taking the values $+1$ and $-1$ each with probability $1/2$; and curves on the right were generated with $\xi$ a
gaussian normal random variable.  In both cases, the curves from left to right correspond to the cases $l=0,1,2$, respectively. } 
\label{fig5_2d}
\end{figure}

Figure~\ref{fig5_2d} plots the functions $\Pr( \sqrt{n}\sigma_{n-k} (M_n(\xi)) \le x)$, where $k=0,1,2$, for Bernoulli and gaussian distributions.

%{\bf To Phil: Can you have a picture here, with $l=0,1,2$, say--all in one picture as you already did, but perhaps two different pictures for Bernoulli and Gaussian}

To conclude this section, let us mention two important recent results. 
In   \cite{rv2}, Rudelson and Vershynin obtained 
strong tail estimates for the smallest singular value of 
random rectangular matrices of all possible sizes. 
In \cite{FS} Feldheim and Sodin considered the case when $l$ is large, $l=\Theta (n)$
and  proved universality for the distribution of the least singular value of random matrices with entries having sub-gaussian tails. (In this case the least singular value is
large,  of order $\Theta (\sqrt n)$,  with high probability.)

\subsection{Random matrices with not necessarily identical  entries}

All results hold  if we drop the condition that the entries of $M_{n}$ have the same distribution. 
In the proofs, it is only important that the these entries are all normalized, independent, and their 
$C_{0}$-moment is uniformly bounded. The fact that they are iid copies of $\a$ has not played any important role. The only place where this information was used is in the paragraph following Lemma \ref{proj0} (this allows one to fix the random rows of $B$ to be the first $s$). But we can easily avoid this by conditioning 
on the choice of the rows of $B$.  Thus we have the following extension of Theorem \ref{main-thm}.

\begin{theorem} \label{Thm:notiid} Let $\a_{ij}$ be $\R$- or $\C$-normalized, independent  random variables such that 
 $\E |\a|^{C_0} \le C_{1}$ for some sufficiently large absolute constants $C_0$ and $C_{1}$. 
 Then for all $t>0$, we have
\begin{equation}\label{mna2}
 \P( n \sigma_n( M_n( \a ) )^2 \leq t ) = \int_0^t \frac{1+\sqrt{x}}{2\sqrt{x}} e^{-(x/2 + \sqrt{x})}\ dx + O(n^{-c})
\end{equation}
if $\a_{ij}$ are all  $\R$-normalized, and
$$ \P( n \sigma_n( M_n( \a ) )^2 \leq t ) = \int_0^t e^{-x}\ dx + O(n^{-c})$$
if $\a_{ij}$ are all  $\C$-normalized, where $c>0$ is an absolute constant.
The implied constants in the $O(.)$ notation depend on $\E
|\a|^{C_0}$ but are uniform in $t$.
\end{theorem}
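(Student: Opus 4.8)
The plan is to re-run the proof of Theorem \ref{main-thm} essentially verbatim, isolating the single place where the identical-distribution hypothesis was genuinely used and patching it by a conditioning argument. Throughout, write $M_n = M_n(\a)$ for the matrix with independent entries $\a_{ij}$, each $F$-normalized with $\E|\a_{ij}|^{C_0}\le C_1$. First one checks that the preliminary reductions survive entrywise: each $\a_{ij}$ may be truncated to $|\a_{ij}|\le n^{10/C_0}$ and renormalized, and then approximated by a continuous variable, at a cost that is uniform in $(i,j)$ because the $C_0$-moments are uniformly bounded; hence one may assume \eqref{atoy} and continuity as before. The purely linear-algebraic inputs — the random-sampling bound (Lemma \ref{rs0}), the projection identity (Lemma \ref{proj0}), Lemma \ref{dvh}, and the distance-correlation inequality (Lemma \ref{Relation}) — involve no distributional hypothesis and are used unchanged.

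The probabilistic inputs must then be checked to survive the loss of identical distribution, but in each case the relevant proof uses only independence, $F$-normalization, and the uniform bound coming from \eqref{atoy}. Concretely, Lemma \ref{corup0} needs only that the coordinates of $X$ be independent, mean zero, variance one and bounded by $K$; Lemma \ref{bai-cor0} holds for any random matrix with independent, normalized, bounded entries; and the Lindeberg-type swapping argument behind Proposition \ref{berry-esseen-frame0} still telescopes: replacing the summands $\a_j v_j$ by $\g_{F,j} v_j$ one at a time costs $O(N^{O(1)}\eps^{-3}|v_j|)$ per swap, using only that $\a_j$ and $\g_{F,j}$ share their first two moments and that $\sup_j\E|\a_j|^3<\infty$, and summing over $j$ reproduces the bound $O(N^{5/2}\eps^{-3}\max_j|v_j|)$; crucially, the Gaussian comparison vector still has iid \emph{standard} coordinates precisely because each $\a_j$ is normalized. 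Consequently the proofs of Proposition \ref{tailbound0} (Section \ref{tailbound-sec}) and Proposition \ref{v-nondeg0} (Section \ref{normal-sec}) go through word for word, with ``iid copies of $\a$'' read as ``independent $F$-normalized entries''.

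The one structural change concerns the paragraph following Lemma \ref{proj0}, where column-exchangeability of $M_n$ (hence row-exchangeability of $M_n^{-1}$) was invoked to replace the $s$ randomly sampled rows $R_{k_1},\ldots,R_{k_s}$ of $M_n^{-1}$ by the first $s$ rows — this now fails. Instead one keeps the random indices $K := \{k_1,\ldots,k_s\}$ explicit and conditions. Applying Lemma \ref{proj0} with the index set $K$ in place of $\{1,\ldots,s\}$ produces an $s\times s$ matrix $M_{s,n}$ whose columns are $\pi(X_{k_1}),\ldots,\pi(X_{k_s})$, where $\pi$ is the orthogonal projection onto the orthogonal complement $V$ of $\Span(X_j : j\notin K)$. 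Since the columns $X_1,\ldots,X_n$ of $M_n$ are independent, conditioning on $(X_j)_{j\notin K}$ leaves $(X_j)_{j\in K}$ independent, and $M_{s,n}$ is a linear combination, with coefficients among the still-independent and $F$-normalized entries $\a_{ij}$ of $M_n$, of the deterministic matrices appearing in the proof of Theorem \ref{main-thm}; the frame identity $\sum V V^* = I_{s^2}$ is unaffected. Applying the non-identically-distributed Berry--Ess\'een frame central limit theorem described above, together with the non-degeneracy of $V$ from Proposition \ref{v-nondeg0} (valid for the span of $n-s$ independent, not necessarily identical columns), one obtains the two-sided comparison of $s\sigma_s(M_{s,n})^2$ with $s\sigma_s(M_s(\g_F))^2$ exactly as in the derivation of \eqref{nna}, \eqref{nnb}; averaging over the random choice of $K$ restores \eqref{nna}, \eqref{nnb} for the non-identically-distributed model.

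The endgame is then unchanged: the iterated ``doubling'' estimates \eqref{t1}, \eqref{t2} and Theorem \ref{edel-thm} are applied only with $\a = \g_F$, which is genuinely iid, giving $\P(n\sigma_n(M_n(\g_F))^2\le t) = \int_0^t f(x)\,dx + O(n^{-1/C_0})$ as before; feeding this (with $n$ replaced by $s$) into the versions of \eqref{nna}, \eqref{nnb} just established, and using continuity of $f$, yields \eqref{jar} for the non-identically-distributed model, which is Theorem \ref{Thm:notiid} (and the $\C$-normalized case identically, with $f(x) = e^{-x}$). I expect the main obstacle to be not conceptual but one of bookkeeping: one must verify that no step in the proof of Theorem \ref{main-thm}, nor in the proofs of Lemmas \ref{bai-cor0}, \ref{corup0}, \ref{berry-esseen-frame0} and Propositions \ref{tailbound0}, \ref{v-nondeg0}, ever uses more than independence, normalization, and a uniform moment bound, and in particular that the Lindeberg swap underlying Proposition \ref{berry-esseen-frame0} still telescopes; the conditioning device replacing row-exchangeability is the only genuinely new ingredient.
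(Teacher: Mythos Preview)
Your proposal is correct and follows essentially the same approach as the paper: the paper's own proof is a one-paragraph remark that the argument for Theorem \ref{main-thm} uses only independence, normalization, and a uniform $C_0$-moment bound, except in the paragraph following Lemma \ref{proj0}, where row-exchangeability is bypassed by conditioning on the random choice of rows of $B$ --- precisely the conditioning device you describe. You have simply fleshed out the verification that each supporting lemma survives the loss of identical distribution, which the paper leaves to the reader.
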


The reader is invited to formalize the extensions of the results in the previous two subsections regarding joint distribution of the least $k$-singular values and rectangular matrices. 

Figure~\ref{fig2BerGau} shows an empirical demonstration of this theorem. 

\begin{figure}
\begin{center}
\scalebox{.5}{\includegraphics{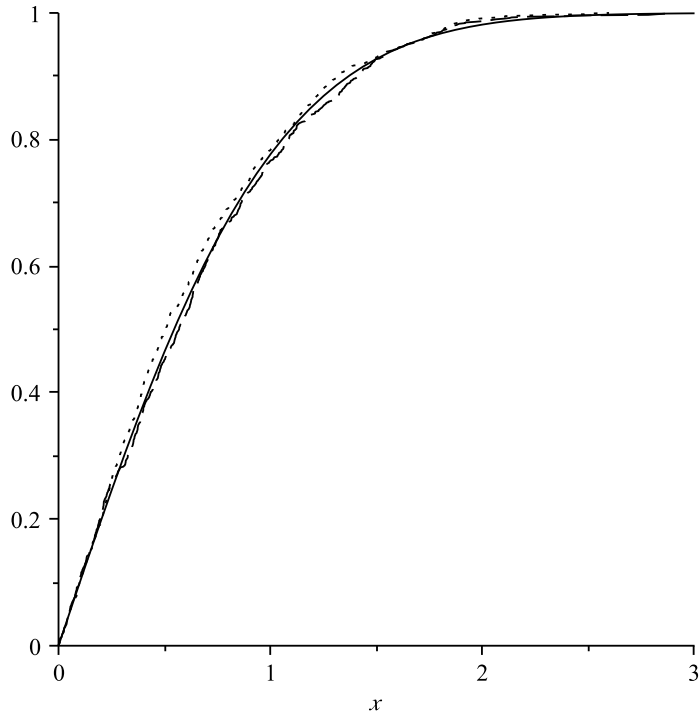}}
\end{center}
\caption{Plotted above is the curve $\Pr( \sqrt{n} \sigma_n(M_n(\xi)) \le x
)$, based on data from 1000 randomly generated matrices with $n=100$. The
dotted curve was generated with $\xi$ a random variable taking the value 0 with probability $1-(1/2)^c$ and taking the
values $+2^{c/2}$ and $-2^{c/2}$ each with probability $(1/2)^{c+1}$, where $c$ is the smallest integer representative of $i+j \mod 3$. The dashed curve was
generated the same way, but with $c=i+j \mod 4$.  The solid curve is a plot of $1-\exp(-x - x^2/2)$, which is the limiting curve predicted by Theorem~\ref{Thm:notiid}.} 
\label{fig2BerGau}

\end{figure}

%{\bf To Phil: Can you have a picture here.}

\subsection{Distribution of the condition number}

Let $M$ be an $n \times n$ matrix, its condition number $\kappa (M)$ is defined as 

$$\kappa (M):= \sigma_{1} (M) / \sigma _{n} (M). $$

Motivated by  an earlier study of von Neuman and Goldstein  \cite{vonG},  Edelman  \cite{edelman}  studied the distribution of $\kappa (M_{n} (\a))$ when $\a$ is gaussian. 
His results can be extended for the general setting in  this paper, thanks to 
Theorem \ref{main-thm} and 
the well known fact that the largest singular value $\sigma _{1} (M_{n} (\a))$ is  concentrated strongly around $2 \sqrt n$.

\begin{lemma} \label{lemma:largestsing} Under the setting of Theorem \ref{main-thm}, we have, with probability $1- \exp(-n^{\Omega(1)} )$, 
$\sigma_{1}(M_{n} (\a)) = (2+o(1)) \sqrt n$.
\end{lemma}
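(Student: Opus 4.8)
The plan is to establish the concentration of $\sigma_1(M_n(\a))$ around $2\sqrt{n}$ by combining a moment-method (or net-based) upper bound on the operator norm with a trivial lower bound coming from a single matched test vector. Since we have already truncated $\a$ so that $|\a| \le n^{10/C_0}$ (see \eqref{atoy}), and since we have a finite moment hypothesis $\E |\a|^{C_0} < \infty$, both halves of the argument are of a well-established type; the only genuine work is to cite or reproduce the relevant bounds carefully.

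\textbf{Upper bound.} First I would show that $\P(\sigma_1(M_n(\a)) \ge (2+\eps)\sqrt{n}) \ll \exp(-n^{\Omega(1)})$ for any fixed $\eps > 0$. Under the truncation \eqref{atoy}, the entries are bounded by $n^{10/C_0}$, which is $n^{o(1)}$ once $C_0$ is large; hence one is in the regime covered by the sub-polynomial-entry versions of the Bai--Yin / Füredi--Komlós theory, or more simply by a high-moment trace estimate: bound $\E\,\tr\big((M_n M_n^*)^k\big)$ for $k$ a slowly growing power of $\log n$, using that the combinatorial sum is dominated by the Catalan-number (tree) contributions, with non-tree terms suppressed by powers of $n^{10/C_0}/\sqrt{n}$. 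This gives $\E\,\tr((M_n M_n^*)^k) \le (4n)^k n^{o(k)}$, and Markov's inequality together with $\sigma_1^{2k} \le \tr((M_nM_n^*)^k)$ yields $\P(\sigma_1 \ge (2+\eps)\sqrt{n}) \le ((2+\eps/2)/(2+\eps))^{2k} n^{o(k)}$, which is $\exp(-n^{\Omega(1)})$ for suitable $k$. Alternatively, one can invoke the result of Soshnikov (already cited in the introduction as \cite{soshnikov}) or an $\eps$-net argument bounding $\sup_{|x|=|y|=1} x^* M_n y$ via Bernstein's inequality on a net of size $e^{O(n)}$, each coordinate of $M_n y$ being a sum of bounded independent variables.

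\textbf{Lower bound.} For the matching lower bound $\sigma_1(M_n(\a)) \ge (2-o(1))\sqrt{n}$ with probability $1 - \exp(-n^{\Omega(1)})$, I would use that $\sigma_1(M_n(\a))^2 = \|M_n M_n^*\|_{op} \ge \frac{1}{n}\tr(M_nM_n^*) = \frac1n \sum_{i,j} |\a_{ij}|^2$ only gives $(1+o(1))\sqrt n$, which is not enough; instead one takes a cleverer test vector. Actually the cleanest route is: $\sigma_1(M_n(\a))$ dominates $\sigma_1$ of any submatrix, and more to the point one can use the Marchenko--Pastur law already quoted in the introduction, which tells us that $\frac1n\,|\{i : \frac1n\sigma_i^2 \le t\}| \to \frac1{2\pi}\int_0^{\min(t,4)}\sqrt{4/x-1}\,dx$ both in probability and almost surely; since the support of the limiting law reaches up to $t = 4$, for any $\delta > 0$ a positive fraction of the $\sigma_i^2/n$ exceed $4-\delta$ with probability tending to one, in particular $\sigma_1^2/n \ge 4-\delta$. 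To upgrade ``probability tending to one'' to ``probability $1 - \exp(-n^{\Omega(1)})$'' one invokes the concentration of linear eigenvalue statistics (e.g.\ via Azuma--Hoeffding on the martingale exposing rows, using the truncation \eqref{atoy} to control the bounded-differences increments), which gives exponential concentration of $\frac1n|\{i:\sigma_i^2/n\le t\}|$ around its mean.

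\textbf{Main obstacle.} The step I expect to require the most care is making the lower bound hold with the stated exponentially small failure probability rather than merely $o(1)$: the soft Marchenko--Pastur statement is in probability/a.s., so one must supply a quantitative concentration inequality for the empirical spectral distribution (or directly for $\sigma_1$) valid for bounded-entry matrices. This is standard — a row-exposure martingale changes $\sigma_1$ by $O(n^{10/C_0})$ per step under \eqref{atoy}, and one applies a bounded-differences inequality — but it is the place where the truncation hypothesis is genuinely used and where the $\exp(-n^{\Omega(1)})$ rate is earned. Combining the upper and lower bounds, and noting the truncation and renormalization only perturb things by $n^{-\Omega(1)}$ (using \eqref{lips}), gives $\sigma_1(M_n(\a)) = (2+o(1))\sqrt n$ with probability $1 - \exp(-n^{\Omega(1)})$, as claimed.
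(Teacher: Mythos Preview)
Your outline has the right two-sided structure, but both halves contain quantitative errors that prevent you from reaching the claimed $\exp(-n^{\Omega(1)})$ failure probability. For the upper bound, taking $k$ to be ``a slowly growing power of $\log n$'' in the moment method yields only quasi-polynomial decay $\exp(-(\log n)^{O(1)})$, not $\exp(-n^{\Omega(1)})$; you would need $k$ to be a genuine power of $n$, and justifying the trace bound $\E\,\tr((M_nM_n^*)^k)\le(4n)^k n^{1+o(1)}$ in that regime requires the more delicate F\"uredi--Koml\'os/Soshnikov combinatorics, not the soft version you sketch. For the lower bound, your ``Main obstacle'' paragraph asserts that exposing one row changes $\sigma_1$ by $O(n^{10/C_0})$, but in fact replacing a row changes the matrix in operator norm by the Euclidean length of the row difference, which is $O(\sqrt n\, n^{10/C_0})$; with $n$ martingale steps of this size, Azuma gives the useless bound $\exp\bigl(-\delta^2 n/(n\cdot n\cdot n^{20/C_0})\bigr)\to 1$. (Your earlier remark about concentrating the empirical count $\tfrac1n|\{i:\sigma_i^2/n\le t\}|$ via Cauchy interlacing \emph{does} work --- interlacing bounds each increment by $2/n$ regardless of truncation --- so the lower bound can be rescued that way, but the bounded-differences claim you actually write down for $\sigma_1$ itself is wrong.)

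The paper sidesteps both issues by the Guionnet--Zeitouni / Alon--Krivelevich--Vu route already set up in Appendix~\ref{section:MP}: the map $A\mapsto\sigma_1(A)=\|A\|_{op}$ is convex (a supremum of linear functionals) and $1$-Lipschitz in the Frobenius norm, so Talagrand's inequality \eqref{mood} gives
\[
\P\bigl(|\sigma_1-M(\sigma_1)|\ge\delta\sqrt n\bigr)\le 4\exp\bigl(-\delta^2 n/(16\, n^{20/C_0})\bigr)=\exp(-n^{\Omega(1)})
\]
directly, once $C_0>20$. It then suffices to know that $M(\sigma_1)=(2+o(1))\sqrt n$, which follows from any non-quantitative limit theorem for the top singular value (e.g.\ \cite[Theorem 5.8]{BS} or the Marchenko--Pastur edge), so no sharp high-moment computation or $\eps$-net argument is needed. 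This is both simpler and what actually earns the exponential rate.
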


 One can prove this lemma by combining \cite[Theorem 5.8]{BS} with the tools in Appendix \ref{section:MP} or by following the arguments in \cite{AKV, GZ}. See also \cite{rv, rv2} for references.

 \begin{corollary}  \label{corollary:condition} Let $\a_{ij}$ be $\R$- or $\C$-normalized, independent  random variables such that 
 $\E |\a|^{C_0} \le C_{1}$ for some sufficiently large absolute constants $C_0$ and $C_{1}$. 
 Then for all $t>0$, we have
\begin{equation}\label{mna3}
 \P(\frac{1}{2n}  \kappa( M_n( \a ) )  \ge t ) = \int_0^t \frac{1+\sqrt{x}}{2\sqrt{x}} e^{-(x/2 + \sqrt{x})}\ dx + O(n^{-c})
\end{equation}
if $\a_{ij}$ are all  $\R$-normalized, and
$$ \P( \frac{1}{2n} \kappa (M_{n } (\a))  \ge t ) = \int_0^t e^{-x}\ dx + O(n^{-c})$$
if $\a_{ij}$ are all  $\C$-normalized, where $c>0$ is an absolute constant.
The implied constants in the $O(.)$ notation depend on $\E
|\a|^{C_0}$ but are uniform in $t$.
 \end{corollary}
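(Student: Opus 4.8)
The plan is to read off Corollary \ref{corollary:condition} from two facts already at our disposal: Theorem \ref{Thm:notiid} (or, for the iid case, Theorem \ref{main-thm}), which gives the universal law of $n\sigma_n(M_n(\a))^2$, and Lemma \ref{lemma:largestsing}, which localises $\sigma_1(M_n(\a))$ at $2\sqrt n$. Since $\kappa(M_n(\a)) = \sigma_1(M_n(\a))/\sigma_n(M_n(\a))$, on the overwhelmingly likely event that $\sigma_1$ is as predicted the scaled condition number $\frac1{2n}\kappa(M_n(\a))$ is, up to a multiplicative error that is negligible on the scale $n^{-c}$, an explicit monotone function of $n\sigma_n(M_n(\a))^2$; substituting the law of the latter then produces the formulae in \eqref{mna3} and its $\C$-analogue.

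Concretely, fix $t>0$, and let $\CG$ be the event that $\sigma_1(M_n(\a)) = 2\sqrt n\,(1+O(n^{-c_1}))$ for a small absolute constant $c_1>0$. We record (in the quantitative form discussed below) that $\P(\overline{\CG}) \ll \exp(-n^{\Omega(1)})$; cf.\ Lemma \ref{lemma:largestsing}. On $\CG$ one has
$$\frac1{2n}\kappa(M_n(\a)) = \frac{\sigma_1(M_n(\a))}{2n\,\sigma_n(M_n(\a))} = \bigl(n\sigma_n(M_n(\a))^2\bigr)^{-1/2}\,\bigl(1+O(n^{-c_1})\bigr),$$
so that the event $\{\tfrac1{2n}\kappa(M_n(\a))\ge t\}$ is, on $\CG$, trapped between two events of the shape $\{n\sigma_n(M_n(\a))^2\le \tau\}$ whose thresholds $\tau$ differ by a factor $1+O(n^{-c_1})$ from the nominal value dictated by the change of variables.

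It then remains to invoke Theorem \ref{Thm:notiid}, which gives $\P(n\sigma_n(M_n(\a))^2\le s) = \int_0^s f(x)\,dx + O(n^{-c})$ uniformly in $s$ (with $f$ as in Section \ref{overview}), and to dispose of the two sources of slack. The multiplicative perturbation of the threshold is harmless because $\int_0^s f$ is Lipschitz away from $0$, and even near the origin in the $\R$ case $\int_s^{s(1+\delta)} f \ll \delta$ since the $x^{-1/2}$ singularity of $f$ is integrable; thus shifting $\tau$ by a factor $1+O(n^{-c_1})$ costs only $O(n^{-c_1})$. The extreme ranges of $t$ (say $t\le n^{-c_2}$ or $t\ge n^{c_2}$) are handled separately by crude bounds: there both sides of \eqref{mna3} agree with $1$ (resp.\ with $0$) up to $O(n^{-c})$, using $\int_0^s f = O(\sqrt s)$ as $s\to 0$ and the exponential decay of $f$. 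Reassembling the one-sided estimates, and using \eqref{eqn:explicit1}, \eqref{eqn:explicit2} to express the answer in closed form, yields Corollary \ref{corollary:condition}.

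The one point that genuinely needs care is the first step: Lemma \ref{lemma:largestsing} is stated only as $\sigma_1 = (2+o(1))\sqrt n$, whereas to obtain the $O(n^{-c})$ error in \eqref{mna3} (rather than a mere $o(1)$) one needs a \emph{polynomial} rate, i.e.\ $|\sigma_1(M_n(\a)) - 2\sqrt n| \ll n^{1/2-c_1}$ with probability $1-\exp(-n^{\Omega(1)})$. This is available from the moment-method bounds behind \cite[Theorem 5.8]{BS}, combined with the Marchenko--Pastur estimates of Appendix \ref{section:MP} and the truncation \eqref{atoy}, but it is the ingredient that must be made quantitative. Everything after that --- propagating the multiplicative error in $\sigma_1$ and the truncation/renormalisation of Section \ref{overview} through the change of variables, and verifying uniformity in $t$ --- is routine bookkeeping.
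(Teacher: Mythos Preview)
Your approach is correct and is exactly the paper's: the corollary is stated immediately after Lemma \ref{lemma:largestsing} and is meant to follow by combining that lemma with Theorem \ref{Thm:notiid} (or Theorem \ref{main-thm}), with no further argument given. Your observation that Lemma \ref{lemma:largestsing} as stated only yields $\sigma_1 = (2+o(1))\sqrt{n}$, whereas a polynomial rate $|\sigma_1 - 2\sqrt{n}| \ll n^{1/2-c_1}$ is what is actually needed to preserve the $O(n^{-c})$ error term, is a genuine subtlety that the paper does not make explicit.
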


\appendix

\section{Estimating singular values via random sampling}\label{sampling-sec}

\begin{lemma}[Random sampling]\label{rs}  Let $1 \leq s \leq n$ be integers. $A$ be an $n \times n$ real or complex matrix with rows $R_1,\ldots,R_n$.  Let $k_1,\ldots,k_s \in \{1,\ldots,n\}$ be selected independently and uniformly at random, and let $B$ be the $s \times n$ matrix with rows $R_{k_1},\ldots,R_{k_s}$.  Then
$$ \E \| A^* A - \frac{n}{s} B^* B \|_F^2 \leq \frac{n}{s} \sum_{k=1}^n |R_k|^4.$$
\end{lemma}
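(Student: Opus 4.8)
The plan is to prove the second-moment bound by a direct computation, exploiting the fact that $B^*B$ is an unbiased estimator of $(s/n) A^*A$ after rescaling, and that the rows are sampled i.i.d.

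\textbf{Setup.} First I would write both sides in terms of the rows. Since $A^*A = \sum_{k=1}^n R_k^* R_k$ and $B^*B = \sum_{\ell=1}^s R_{k_\ell}^* R_{k_\ell}$, introduce the i.i.d.\ random matrices $Z_\ell := \frac{n}{s} R_{k_\ell}^* R_{k_\ell}$ for $\ell = 1,\ldots,s$, each an $n\times n$ matrix. Then $\frac{n}{s}B^*B = \sum_{\ell=1}^s Z_\ell$, and a single draw satisfies $\E Z_\ell = \frac{n}{s}\cdot\frac1n\sum_{k=1}^n R_k^* R_k = \frac1s A^*A$, so $\E \sum_{\ell=1}^s Z_\ell = A^*A$. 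Hence $A^*A - \frac{n}{s}B^*B = \sum_{\ell=1}^s (\E Z_\ell - Z_\ell)$ is a sum of $s$ i.i.d.\ centered terms.

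\textbf{Main computation.} Using the Frobenius inner product $\langle M, N\rangle_F := \tr(M^*N)$ (for which $\|\cdot\|_F$ is the associated norm), expand
$$ \E\Big\| \sum_{\ell=1}^s (Z_\ell - \E Z_\ell)\Big\|_F^2 = \sum_{\ell=1}^s \E\|Z_\ell - \E Z_\ell\|_F^2, $$
where the cross terms vanish by independence and centering. Each summand is bounded by $\E\|Z_\ell\|_F^2$, and since $\|R_k^* R_k\|_F^2 = \tr(R_k^* R_k R_k^* R_k) = |R_k|^2 \tr(R_k^* R_k) = |R_k|^4$, we get $\E\|Z_\ell\|_F^2 = \frac{n^2}{s^2}\cdot\frac1n\sum_{k=1}^n |R_k|^4 = \frac{n}{s^2}\sum_{k=1}^n |R_k|^4$. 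Summing over the $s$ values of $\ell$ gives exactly the claimed bound $\frac{n}{s}\sum_{k=1}^n |R_k|^4$. Note that dropping the $-\E Z_\ell$ inside the norm (rather than keeping $\E\|Z_\ell\|_F^2 - \|\E Z_\ell\|_F^2$) is what makes the final constant clean; the variance is only smaller.

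\textbf{Main obstacle.} There is no serious analytic difficulty here; the only thing to be careful about is the bookkeeping that the diagonal terms $\E\|Z_\ell - \E Z_\ell\|_F^2$ are handled per-sample while the expectation inside is over the uniform choice of a single index $k_\ell$, and that the off-diagonal terms genuinely vanish (this uses that $k_\ell, k_{\ell'}$ are independent for $\ell \neq \ell'$, and that each $Z_\ell - \E Z_\ell$ has mean zero). A secondary point worth stating explicitly is the identity $\|vv^*\|_F^2 = |v|^4$ for a row vector $v$, which drives the appearance of $\sum_k |R_k|^4$ on the right-hand side. Everything else is routine expansion of the Frobenius norm.
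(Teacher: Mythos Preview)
Your proof is correct and is essentially the same argument as the paper's: both identify $A^*A - \frac{n}{s}B^*B$ as a sum of $s$ i.i.d.\ centered terms, kill the cross terms by independence, and bound the variance by the raw second moment (your step $\E\|Z_\ell - \E Z_\ell\|_F^2 \le \E\|Z_\ell\|_F^2$ corresponds exactly to the paper's ``discarding the second term in $V_{ij}$''). The only difference is cosmetic---you work with the matrix-valued summands $Z_\ell = \frac{n}{s}R_{k_\ell}^*R_{k_\ell}$ directly, while the paper carries out the identical computation entrywise.
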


\begin{proof} Let $a_{ij}$ denote the coefficients of $A$, thus $R_i = (a_{i1},\ldots,a_{in})$.  For $1 \leq i \leq j$, the $ij$ entry of $A^* A - \frac{n}{s} B^* B$ is given by
\begin{equation}\label{joy}
 \sum_{k=1}^n \overline{a_{ki}} a_{kj} - \frac{n}{s} \sum_{l=1}^s \overline{a_{k_l i}} a_{k_l j}.
 \end{equation}
For $l=1,\ldots,s$, the random variables $\overline{a_{k_l i}}
a_{k_l j}$ are iid with mean $\frac{1}{n} \sum_{k=1}^n
\overline{a_{ki}} a_{kj}$ and variance
\begin{equation}\label{vij}
 V_{ij} := \frac{1}{n} \sum_{k=1}^n |a_{ki}|^2 |a_{kj}|^2 - |\frac{1}{n} \sum_{k=1}^n \overline{a_{ki}} a_{kj}|^2,
\end{equation}
and so the random variable \eqref{joy} has mean zero and variance
$\frac{n^2}{s} V_{ij}$.  Summing over $i,j$, we conclude that
$$ \E \| A^* A - \frac{n}{s} B^* B \|_F^2 = \frac{n^2}{s} \sum_{i=1}^n \sum_{j=1}^n V_{ij}.$$
Discarding the second term in \eqref{vij} we conclude
$$ \E \| A^* A - \frac{n}{s} B^* B \|_F^2 \leq \frac{n}{s} \sum_{i=1}^n \sum_{j=1}^n \sum_{k=1}^n |a_{ki}|^2 |a_{kj}|^2.$$
Performing the $i,j$ summations, we obtain the claim.
\end{proof}

Combining this lemma with the Hoefmann-Wielandt theorem (Lemma
\ref{hw}) we conclude that
$$ \E \sum_{i=1}^n (\sigma_i(A)^2 - \frac{n}{s} \sigma_i(B)^2)^2 \leq \frac{n}{s} \sum_{k=1}^n |R_k|^4.$$
Now observe that if $s \leq \sqrt{n}$, then by the standard
birthday paradox computation\footnote{One could remove this
condition $s \leq \sqrt{n}$ by reworking the second moment
computation in Lemma \ref{rs} when $k_1,\ldots,k_s$ are sampled
with replacement (and thus have some slight correlation with each
other), but we will not do so here since in our applications $s$
will be a small power of $n$.}, the rows $k_1,\ldots,k_s$ are
distinct with probability $\Omega(1)$.  Conditioning on this
event, we thus see that if $k_1,\ldots,k_s$ are sampled from
$\{1,\ldots,n\}$ \emph{without} replacement, that
\begin{equation}\label{dest}
 \E \sum_{i=1}^n (\sigma_i(A)^2 - \frac{n}{s} \sigma_i(B)^2)^2 \ll \frac{n}{s} \sum_{k=1}^n |R_k|^4.
\end{equation}
The inequality \eqref{dest} is valid for deterministic matrices
$A$.   If $A$ is a random matrix (with the $k_1,\ldots,k_s$ being
sampled independently of $A$), then we may take expectations of
\eqref{dest} for each instance of $A$ and conclude that
$$
 \E \sum_{i=1}^n (\sigma_i(A)^2 - \frac{n}{s} \sigma_i(B)^2)^2 \ll \frac{n}{s} \E \sum_{k=1}^n |R_k|^4.
$$
Now assume that the random matrix $A$ is \emph{row-exchangeable},
i.e. the distribution of $A$ is stationary with respect to row
permutations $R_i \leftrightarrow R_j$.  For fixed and distinct
$k_1,\ldots,k_s \in \{1,\ldots,n\}$, the distribution of
$\sum_{i=1}^n (\sigma_i(A)^2 - \frac{n}{s} \sigma_i(B)^2)^2$ is
independent of the choice of $k_1,\ldots,k_s$.  Crudely bounding
$|R_k|$ by $\max_{1 \leq k \leq n} |R_k|$, we conclude

\begin{corollary}[Sampling of row-exchangeable matrices]\label{srm}  Let $s, n$ be integers with $1 \leq s \leq \sqrt{n}$, let $A$ be a row-exchangeable random $n \times n$ matrix with rows $R_1,\ldots,R_n$, and let $B$ be the $s \times n$ matrix with rows $R_1,\ldots,R_s$.  Then
$$ \E \sum_{i=1}^n (\sigma_i(A)^2 - \frac{n}{s} \sigma_i(B)^2)^2 \ll \frac{n^2}{s} \E (\max_{1 \leq k \leq n} |R_k|)^4.$$
\end{corollary}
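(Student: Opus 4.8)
The plan is to obtain the corollary as a short consequence of Lemma~\ref{rs}, the Hoffman--Wielandt inequality, and two elementary reductions; essentially all of the analytic content already resides in Lemma~\ref{rs}, so what remains is bookkeeping. First I would combine Lemma~\ref{rs} with the Hoffman--Wielandt theorem (Lemma~\ref{hw}). The matrices $A^*A$ and $\frac{n}{s}B^*B$ are Hermitian positive semidefinite $n\times n$ matrices whose eigenvalue lists are $(\sigma_i(A)^2)_{1\le i\le n}$ and $(\frac{n}{s}\sigma_i(B)^2)_{1\le i\le n}$ respectively, the second list being padded with zeros since $B$ has only $s$ nonzero singular values. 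Hence Hoffman--Wielandt gives $\sum_{i=1}^n (\sigma_i(A)^2 - \frac{n}{s}\sigma_i(B)^2)^2 \le \|A^*A - \frac{n}{s}B^*B\|_F^2$, and taking expectations over the (with-replacement) choice of $k_1,\ldots,k_s$ and invoking Lemma~\ref{rs} yields $\E \sum_{i=1}^n (\sigma_i(A)^2 - \frac{n}{s}\sigma_i(B)^2)^2 \le \frac{n}{s}\sum_{k=1}^n |R_k|^4$ for a fixed matrix $A$.

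Next I would pass from sampling with replacement to sampling without replacement, and this is where the hypothesis $s\le\sqrt n$ is used: by the birthday-paradox estimate, the indices $k_1,\ldots,k_s$ drawn uniformly with replacement are pairwise distinct with probability $\Omega(1)$, so conditioning on this event inflates the expectation by at most a bounded factor, and one obtains $\E \sum_{i=1}^n (\sigma_i(A)^2 - \frac{n}{s}\sigma_i(B)^2)^2 \ll \frac{n}{s}\sum_{k=1}^n |R_k|^4$ when $B$ is formed from $s$ rows of $A$ drawn uniformly without replacement. This inequality is valid for every deterministic $A$; if $A$ is random and independent of the sampling, averaging it over $A$ gives the same bound with $\sum_{k=1}^n |R_k|^4$ replaced by $\E \sum_{k=1}^n |R_k|^4$.

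Finally I would invoke row-exchangeability. For any fixed set of distinct indices $k_1,\ldots,k_s$, row-exchangeability of $A$ forces the distribution of $\sum_{i=1}^n (\sigma_i(A)^2 - \frac{n}{s}\sigma_i(B)^2)^2$ to be the same; in particular it coincides with the distribution under uniform without-replacement sampling and also with the distribution when $B$ is simply taken to have rows $R_1,\ldots,R_s$. Therefore the bound of the previous paragraph applies verbatim to the first-$s$-rows submatrix $B$ appearing in the statement. Crudely bounding $\sum_{k=1}^n |R_k|^4 \le n\,(\max_{1\le k\le n}|R_k|)^4$ then produces the asserted estimate $\E \sum_{i=1}^n (\sigma_i(A)^2 - \frac{n}{s}\sigma_i(B)^2)^2 \ll \frac{n^2}{s}\,\E(\max_{1\le k\le n}|R_k|)^4$. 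The only steps requiring any genuine care are the birthday-paradox conversion together with its interaction with conditional expectations, and the observation that row-exchangeability legitimately identifies the deterministic ``first $s$ rows'' submatrix with a uniform without-replacement sample; neither of these presents a real obstacle.
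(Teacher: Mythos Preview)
Your proposal is correct and follows essentially the same route as the paper: combine Lemma~\ref{rs} with Hoffman--Wielandt, use the birthday paradox under $s\le\sqrt n$ to pass to sampling without replacement, average over random $A$, invoke row-exchangeability to reduce to the first $s$ rows, and finish with the crude bound $\sum_k |R_k|^4 \le n(\max_k |R_k|)^4$. The paper's derivation in Appendix~\ref{sampling-sec} is step-for-step the same.
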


\begin{remark} One could use row-exchangeability here to instead simplify $\E \sum_{k=1}^n |R_k|^4$ as $n \E |R_1|^4$, but it will turn out that this will not be useful for us, as we will need to truncate $\max_{1 \leq k \leq n} |R_k|$ in any event in order to assure that $\E |R_1|^4$ is bounded.
\end{remark}

\begin{remark} In our applications, $A$ is going to be the inverse of $M_n(\a)$
(after conditioning away some bad but rare events in which
$M_n(\a)$ is close to degenerate),
 and the summand of most interest on the left-hand side is the $i=1$ summand.
  We expect $\sigma_1(A)$ to be of size about $\sqrt{n}$, and we also expect the $|R_k|$ to have size about $O(1)$ (thanks to Lemma \ref{dvh} and basic heuristics about the distance between a random vector and a random hyperplane).  So, as soon as $s$ is a non-trivial power of $n$, we expect
Corollary \ref{srm} to yield an approximation of the form
\eqref{sigman}.
\end{remark}

\section{Linear algebra} \label{section:algebra}

In this appendix we collect various basic facts from linear algebra which we will rely upon in this paper.
 We begin with the Hoeffman-Wielandt theorem, which controls the variation of singular values of a matrix using the Frobenius norm.

\begin{lemma}[Hoeffman-Wielandt theorem]\label{hw}  For any two $n \times n$ self-adjoint matrices $M, M'$, we have
$$ \sum_{i=1}^n (\sigma_i(M) - \sigma_i(M'))^2 \leq \| M - M' \|_F^2.$$
As a corollary, for any two $n \times n$ matrices $A, B$, we have
$$ \sum_{i=1}^n (\sigma_i(A)^2 - \sigma_i(B)^2)^2 \leq \| A^* A - B^* B \|_F^2.$$
\end{lemma}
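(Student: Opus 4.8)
The plan is to prove the first inequality by the classical Hoffman--Wielandt argument (reducing a Frobenius-norm comparison to a linear optimization over doubly stochastic matrices), carry it out in the case needed for the paper, namely positive semidefinite $M,M'$, and then deduce the corollary by applying it to $M=A^*A$, $M'=B^*B$. Throughout, $\sigma_i$ for a self-adjoint matrix means the eigenvalues arranged in non-increasing order; for positive semidefinite matrices these are exactly the singular values.

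\emph{Step 1 (reduction to a trace inequality).} Diagonalize $M = U \Lambda U^*$ and $M' = V \Lambda' V^*$ with $U,V$ unitary and $\Lambda = \operatorname{diag}(\lambda_1,\dots,\lambda_n)$, $\Lambda' = \operatorname{diag}(\lambda'_1,\dots,\lambda'_n)$ the eigenvalues in non-increasing order. Put $W := U^*V$, which is unitary. Since $\|\cdot\|_F$ is unitarily invariant,
\[
\|M-M'\|_F^2 = \|\Lambda - W\Lambda' W^*\|_F^2 = \tr(\Lambda^2) + \tr((\Lambda')^2) - 2\,\tr(\Lambda W \Lambda' W^*).
\]
Expanding the last trace in coordinates gives $\tr(\Lambda W\Lambda'W^*) = \sum_{i,j} \lambda_i \lambda'_j |w_{ij}|^2$ (in particular it is real). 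Writing $D := (|w_{ij}|^2)_{1\le i,j\le n}$, which is doubly stochastic because $W$ is unitary, and recalling $\sum_i(\lambda_i-\lambda'_i)^2 = \tr(\Lambda^2) + \tr((\Lambda')^2) - 2\sum_i \lambda_i \lambda'_i$, the desired inequality $\sum_i(\lambda_i-\lambda'_i)^2 \le \|M-M'\|_F^2$ is equivalent to
\[
\sum_{i,j} D_{ij}\,\lambda_i\lambda'_j \;\le\; \sum_i \lambda_i\lambda'_i .
\]

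\emph{Step 2 (Birkhoff plus rearrangement).} The left-hand side is a linear functional of $D$ over the Birkhoff polytope of doubly stochastic matrices, so by the Birkhoff--von Neumann theorem its maximum is attained at a permutation matrix $D = P_\pi$, where it equals $\sum_i \lambda_i \lambda'_{\pi(i)}$. By the rearrangement inequality, since $(\lambda_i)$ and $(\lambda'_i)$ are both sorted in non-increasing order, $\sum_i \lambda_i\lambda'_{\pi(i)} \le \sum_i \lambda_i\lambda'_i$ for every $\pi$. This establishes the inequality of Step~1, hence the first claim for self-adjoint (in particular positive semidefinite) $M,M'$. The general self-adjoint statement, with $\sigma_i$ the sorted singular values, follows by the same computation applied to the Jordan--Wielandt dilations $\bigl(\begin{smallmatrix} 0 & M \\ M & 0\end{smallmatrix}\bigr)$, whose eigenvalues are $\pm\sigma_i(M)$ and whose Frobenius norm is $\sqrt 2\,\|M\|_F$; but only the positive semidefinite case is used below.

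\emph{Step 3 (the corollary).} For arbitrary $n\times n$ matrices $A,B$, apply the positive semidefinite case to $M := A^*A$ and $M' := B^*B$. Their eigenvalues, in non-increasing order, are $\sigma_i(A)^2$ and $\sigma_i(B)^2$, so
\[
\sum_{i=1}^n (\sigma_i(A)^2 - \sigma_i(B)^2)^2 = \sum_{i=1}^n (\sigma_i(M)-\sigma_i(M'))^2 \le \|M-M'\|_F^2 = \|A^*A - B^*B\|_F^2 .
\]
The main obstacle is the sole non-formal ingredient in Step~2: checking that $D=(|w_{ij}|^2)$ is doubly stochastic and that the extreme-point structure of the Birkhoff polytope together with the rearrangement inequality pin down the maximum at the identity permutation; everything else is bookkeeping with the unitary invariance and bilinearity of the Frobenius inner product.
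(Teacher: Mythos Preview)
Your argument is correct: the Birkhoff--von Neumann reduction to permutation matrices together with the rearrangement inequality is the classical Hoffman--Wielandt proof, and your Step~3 is exactly how the corollary is derived in the paper as well.

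The paper, however, proves the first inequality by a quite different route. It writes the inequality as $(\sum_i (\sigma_i(M+N)-\sigma_i(M))^2)^{1/2} \le \|N\|_F$, invokes the fundamental theorem of calculus and the triangle inequality in $\ell^2$ to reduce to the infinitesimal statement $(\sum_i (\sigma_i(M+\eps N)-\sigma_i(M))^2)^{1/2} \le \eps\|N\|_F + o(\eps)$, and then checks this by diagonalizing $M$ and using first-order perturbation theory (the left-hand side becomes, to first order in $\eps$, the $\ell^2$ norm of the diagonal of $N$). Your approach is the more standard and fully rigorous one: it sidesteps the analytic issues the paper's argument glosses over (differentiability of eigenvalues along the path, handling eigenvalue multiplicities) at the cost of importing Birkhoff--von Neumann as a black box. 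The paper's argument is shorter and gives a nice geometric picture of why the inequality is sharp infinitesimally, but would need more care to make airtight. Either way, only the positive semidefinite case is needed downstream, and both approaches deliver it cleanly.
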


\begin{proof} It suffices to prove the first inequality, which we rewrite as
$$ (\sum_{i=1}^n (\sigma_i(M+N) - \sigma_i(M))^2)^{1/2} \leq \| N \|_F$$
for self-adjoint $M, N$.  By the fundamental theorem of calculus (or a compactness argument) and the triangle inequality in $l^2$, it suffices to show the infinitesimal version
$$ (\sum_{i=1}^n (\sigma_i(M+\eps N) - \sigma_i(M))^2)^{1/2} \leq \eps \| N \|_F + o(\eps)$$
of this inequality for fixed $M,N$ and sufficiently small $\eps > 0$.  But if we diagonalise $M$, the left-hand side can be computed (up to errors of $o(\eps)$) as $\eps$ times the $l^2$ norm of the diagonal of $N$, and the claim follows.
\end{proof}

Using the minimax characterizations
$$ \sigma_j(A) = \sup_{V: \dim(V) = j} \inf_{v \in V: |v| = 1} |Av|$$
of the singular values, where $V$ ranges over $j$-dimensional
subspaces of $\C^n$, one obtains Weyl's bound

\begin{equation}\label{lips}
\| \sigma_j(A) - \sigma_j(B) \| \leq \| A - B \|_{op}
\end{equation}
for any $m \times n$ matrices $A, B$.  Another easy corollary of the minimax characterization is

\begin{lemma}[Cauchy interlacing law]\label{cauchy}
Let $A$ be an $m \times n$ matrix, and let $A'$ be a $r \times n$ matrix formed by taking $r$ of the $m$ rows of $A$.  Then
$$ \sigma_j(A) \geq \sigma_j(A') \geq \sigma_{j+m-r}(A)$$
for all $1 \leq j \leq \min(r,n)$, with the convention that $\sigma_j(A)=0$ for $j > \min(m,n)$.
\end{lemma}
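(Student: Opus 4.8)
The plan is to derive both inequalities directly from the minimax characterization $\sigma_j(A) = \sup_{\dim V = j} \inf_{v \in V:\, |v|=1} |Av|$ recorded above, by a dimension count. First I would permute the rows of $A$ --- which affects neither $\sigma_j(A)$ nor the multiset of rows forming $A'$ --- so as to assume $A'$ is the submatrix on the first $r$ rows of $A$, and write $B$ for the $(m-r) \times n$ matrix on the remaining $m-r$ rows. The one elementary fact needed is the identity $|Av|^2 = |A'v|^2 + |Bv|^2$ valid for all $v \in \C^n$, which gives $|A'v| \leq |Av|$ pointwise, with equality precisely on $\ker B$. The upper bound $\sigma_j(A) \geq \sigma_j(A')$ is then immediate: for every $j$-dimensional subspace $V$ one has $\inf_{v\in V,\, |v|=1}|A'v| \leq \inf_{v\in V,\, |v|=1}|Av|$, and taking the supremum over all such $V$ gives the claim.

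For the lower bound $\sigma_j(A') \geq \sigma_{j+m-r}(A)$ I would first dispose of the degenerate case $j + m - r > \min(m,n)$, where the right-hand side is $0$ by the stated convention and there is nothing to prove. Otherwise, I would choose a $(j+m-r)$-dimensional subspace $V \subseteq \C^n$ realizing the supremum in the minimax formula for $\sigma_{j+m-r}(A)$ --- this is attained since the relevant Grassmannian is compact --- so that $\inf_{v\in V,\, |v|=1}|Av| = \sigma_{j+m-r}(A)$. Since $\dim \ker B \geq n - (m-r)$, a dimension count yields $\dim(V \cap \ker B) \geq (j+m-r) + (n-(m-r)) - n = j$, so I may pick a $j$-dimensional subspace $V' \subseteq V \cap \ker B$. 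On $V'$ one has $|A'v| = |Av|$, whence $\inf_{v\in V',\, |v|=1}|A'v| = \inf_{v\in V',\, |v|=1}|Av| \geq \inf_{v\in V,\, |v|=1}|Av| = \sigma_{j+m-r}(A)$; feeding $V'$ into the minimax formula for $\sigma_j(A')$ completes the argument.

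I do not expect a genuine obstacle here: this is a standard interlacing dimension count. The only points demanding attention are verifying the inequality $\dim(V \cap \ker B) \geq j$ and checking that the remaining edge cases (where $j + m - r$, or $j$ itself, exceeds $\min(m,n)$) are correctly absorbed by the convention $\sigma_j(A) = 0$ for $j > \min(m,n)$.
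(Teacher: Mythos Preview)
Your proposal is correct and follows precisely the route the paper indicates: the paper does not spell out a proof but simply states that the lemma is ``another easy corollary of the minimax characterization'' $\sigma_j(A) = \sup_{\dim V = j} \inf_{v \in V,\, |v|=1} |Av|$, and your argument is the standard dimension-count derivation from that characterization. The details you supply (the decomposition $|Av|^2 = |A'v|^2 + |Bv|^2$, the bound $\dim\ker B \geq n-(m-r)$, and the intersection count $\dim(V\cap\ker B)\geq j$) are exactly what is needed, and the edge cases are handled correctly by the stated convention.
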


Next, we prove the elementary but crucial projection lemma (Lemma
\ref{proj0}) that relates a random sample of an inverse matrix, to
a randomly projected version of the original matrix. For the
reader's convenience, we restate this lemma.

\begin{lemma}[Projection lemma]\label{proj}  Let $1 \leq s \leq n$ be integers, let $F$ be the real or complex field, let $A$ be an $n \times n$ $F$-valued invertible matrix with columns $X_1,\ldots,X_n$, and let $R_1,\ldots,R_n$ denote the rows of $A^{-1}$.  Let $B$ be the $s \times n$ matrix with rows $R_1,\ldots,R_s$.  Let $V$ be the $s$-dimensional subspace of $F^n$ formed as the orthogonal complement of the span of $X_{s+1},\ldots,X_n$, which we identify with $F^s$ via an orthonormal basis, and let $\pi: F^n \to F^s$ be the orthogonal projection to $V \equiv F^s$.  Let $M$ be the $s \times s$ matrix with columns $\pi(X_1),\ldots,\pi(X_s)$.  Then $M$ is invertible, and we have
$$ B B^* = M^{-1} (M^{-1})^*.$$
In particular, we have
$$ \sigma_j( B ) = \sigma_{s-j+1}( M )^{-1}$$
for all $1 \leq j \leq s$.
\end{lemma}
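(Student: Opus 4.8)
\textbf{Proof proposal for Lemma \ref{proj}.}

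The plan is to exploit the duality between the columns $X_1,\ldots,X_n$ of $A$ and the rows $R_1,\ldots,R_n$ of $A^{-1}$, namely the relations $R_i \cdot X_j = \delta_{ij}$ coming from $A^{-1} A = I$. The key structural observation is that, for $1 \le i \le s$, the row $R_i$ is orthogonal to all of $X_{s+1},\ldots,X_n$ (since $R_i \cdot X_j = 0$ for $j > s$), so each $R_i$ lies in the subspace $V$. Thus we may regard $R_1,\ldots,R_s$ as vectors in $V \equiv F^s$ via the chosen orthonormal basis; write $\tilde R_i = \pi(R_i) = R_i$ for these. Inside $V$, the relations $R_i \cdot X_j = \delta_{ij}$ for $1 \le i,j \le s$ become $\tilde R_i \cdot \pi(X_j) = R_i \cdot X_j = \delta_{ij}$, because $R_i \in V$ means $R_i \cdot X_j = R_i \cdot \pi(X_j)$. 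In other words, inside $F^s$, the family $\{\tilde R_i\}$ and the family $\{\pi(X_j)\}$ are dual bases.

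First I would make this rigorous: the matrix $M$ with columns $\pi(X_1),\ldots,\pi(X_s)$ satisfies $\tilde B M = I_s$, where $\tilde B$ is the $s \times s$ matrix with rows $\tilde R_1,\ldots,\tilde R_s$ (this is just the $\delta_{ij}$ identity above written in matrix form). Hence $M$ is invertible and $\tilde B = M^{-1}$. Next I need to relate $\tilde B \tilde B^*$ to $B B^*$, where $B$ is the genuine $s \times n$ matrix with rows $R_1,\ldots,R_s$. Since each $R_i \in V$ and $\pi$ restricted to $V$ is an isometry onto $F^s$ (it preserves inner products of vectors already lying in $V$), we get $\tilde R_i \cdot \tilde R_j = R_i \cdot R_j$ for all $i,j \le s$; that is, $\tilde B \tilde B^* = B B^*$. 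Combining, $B B^* = \tilde B \tilde B^* = M^{-1}(M^{-1})^*$, which is the first claimed identity.

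For the singular value statement: the eigenvalues of $B B^* = M^{-1}(M^{-1})^*$ are exactly the squares of the singular values of $M^{-1}$, i.e. the numbers $\sigma_j(M^{-1})^2 = \sigma_{s-j+1}(M)^{-2}$ for $1 \le j \le s$ (using $\sigma_j(M^{-1}) = \sigma_{s-j+1}(M)^{-1}$ for an invertible $s\times s$ matrix). On the other hand the eigenvalues of $B B^*$ are $\sigma_j(B)^2$. Matching these in decreasing order yields $\sigma_j(B) = \sigma_{s-j+1}(M)^{-1}$, as desired.

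I do not expect a serious obstacle here; the only point requiring a little care is the bookkeeping showing $R_i \in V$ and that $\pi$ acts isometrically on $V$, so that the projection genuinely does not distort the Gram matrix $B B^*$ — everything else is the standard dual-basis computation and the elementary relation between singular values of $M$ and $M^{-1}$. The case $s=1$ recovers Lemma \ref{dvh}, which is a useful sanity check.
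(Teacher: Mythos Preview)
Your proposal is correct and follows essentially the same approach as the paper: use the duality $R_i \cdot X_j = \delta_{ij}$ to see that $R_1,\ldots,R_s \in V$ and that $\pi(R_i) \cdot \pi(X_j) = \delta_{ij}$, conclude that the rows of $M^{-1}$ are $\pi(R_1),\ldots,\pi(R_s)$, and then match the Gram matrices using that $\pi$ is an isometry on $V$. Your write-up is in fact a bit more explicit (introducing $\tilde B$ and spelling out the singular value identity $\sigma_j(M^{-1}) = \sigma_{s-j+1}(M)^{-1}$), but there is no substantive difference.
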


\begin{proof} By construction, we have $R_i \cdot X_j = \delta_{ij}$ for all $1 \leq i,j \leq n$. In particular, $R_1,\ldots,R_s$ lie in $V$, and
$$ \pi(R_i) \cdot \pi(X_j) = \delta_{ij}$$
for $1 \leq i,j \leq s$.  Thus $M$ is invertible, and the rows of $M^{-1}$ are given by $\pi(R_1),\ldots,\pi(R_s)$.  Thus the $ij$ entry of $(M^{-1}) (M^{-1})^*$ is given by $\pi(R_i) \cdot \pi(R_j)$, while the $ij$ entry of $BB^*$ is given by $R_i \cdot R_j$.  Since $R_1,\ldots,R_s$ lie in $V$, and $\pi$ is an isometry on $V$, the claim follows.
\end{proof}

\section{Correlation between distances}
\label{section:correlation}

\begin{lemma}[Relationship between different distances]\label{Relation2} Let $n \geq 1$, let $F$ be the real or complex field, let $A$ be an $n \times n$ $F$-valued invertible matrix with columns $X_1,\ldots,X_n$, and let $d_i := \dist(X_i,V_i)$ denote the distance from $X_i$ to the hyperplane $V_i$ spanned by $X_1,\ldots,X_{i-1},X_{i+1},\ldots,X_n$.  Let $1 \leq L < j \leq n$, let $V_{L,j}$ denote the orthogonal complement of the span of $X_{L+1},\ldots,X_{j-1},X_{j+1},\ldots,X_n$, and let $\pi_{L,j}: F^n \to V_{L,j}$ denote the orthogonal projection onto $V_{L,j}$.  Then
$$ d_j \geq \frac{ | \pi_{L,j}(X_j) | }{ 1 + \sum_{i=1}^L \frac{|\pi_{L,j}(X_i)|}{d_i} }.$$
\end{lemma}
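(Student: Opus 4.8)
The plan is to pass from the high-dimensional distance $d_j=\dist(X_j,V_j)$ down to a distance computation inside the low-dimensional space $V_{L,j}$, and then to control the coefficients that appear by pairing against the rows $R_1,\dots,R_n$ of $A^{-1}$. Write $W:=\Span(X_{L+1},\dots,X_{j-1},X_{j+1},\dots,X_n)$, so that $V_{L,j}=W^{\perp}$, $\ker\pi_{L,j}=W$, and $V_j=\Span(X_1,\dots,X_L)+W$. The first step is the elementary observation that projecting onto $W^{\perp}$ ``absorbs'' the summand $W$: decomposing $F^n=W\oplus W^{\perp}$ and applying the Pythagorean identity, one checks that for $u\in\Span(X_1,\dots,X_L)$ and $w\in W$ the $W$-component of $X_j-u-w$ can be made to vanish by a suitable choice of $w$, and hence
$$ d_j=\dist(X_j,V_j)=\dist\bigl(\pi_{L,j}(X_j),\ \Span(\pi_{L,j}(X_1),\dots,\pi_{L,j}(X_L))\bigr). $$

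For the second step, let $P$ be the orthogonal projection onto $\Span(\pi_{L,j}(X_1),\dots,\pi_{L,j}(X_L))$, write $P(\pi_{L,j}(X_j))=\sum_{i=1}^{L}c_i\,\pi_{L,j}(X_i)$ for some scalars $c_i$, and set $r:=\pi_{L,j}(X_j)-P(\pi_{L,j}(X_j))$, so that $\pi_{L,j}(X_j)=\sum_{i=1}^{L}c_i\,\pi_{L,j}(X_i)+r$. By the first step $|r|=d_j$; moreover $r\in V_{L,j}$, so $\pi_{L,j}(r)=r$, and therefore $\pi_{L,j}\bigl(X_j-\sum_{i=1}^{L}c_iX_i-r\bigr)=0$, i.e.
$$ X_j-\sum_{i=1}^{L}c_iX_i-r\in W. $$

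The key step is to extract $|c_i|$ from this membership by pairing with $R_i$, $1\le i\le L$. Since $R_i\cdot X_k=\delta_{ik}$, the vector $R_i$ is orthogonal to every generator $X_k$ of $W$ (as $k\ge L+1>i$), so $R_i\perp W$; and $R_i\cdot X_j=0$ since $i\le L<j$. Pairing the displayed relation with $R_i$ thus gives $0=-c_i-R_i\cdot r$, whence by Cauchy--Schwarz and Lemma \ref{dvh} (which yields $|R_i|=d_i^{-1}$),
$$ |c_i|=|R_i\cdot r|\le|R_i|\,|r|=\frac{d_j}{d_i}. $$
Finally the triangle inequality gives
$$ |\pi_{L,j}(X_j)|\le\sum_{i=1}^{L}|c_i|\,|\pi_{L,j}(X_i)|+|r|\le d_j\Bigl(1+\sum_{i=1}^{L}\frac{|\pi_{L,j}(X_i)|}{d_i}\Bigr), $$
which rearranges to the claimed bound (note $d_i>0$ for all $i$ by invertibility of $A$). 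I do not anticipate a genuine obstacle: the argument is elementary linear algebra, and the only points needing care are setting up the first-step reduction correctly and, when $F=\C$, keeping track of complex conjugates in the pairings (it is $\overline{R_i}$ rather than $R_i$ that is Hermitian-orthogonal to $W$), which does not affect any of the norm estimates.
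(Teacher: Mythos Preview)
Your argument is correct. Both your proof and the paper's begin with the same reduction: since $V_j=\Span(X_1,\dots,X_L)+W$ with $W=\ker\pi_{L,j}$, one has $d_j=\dist\bigl(\pi_{L,j}(X_j),\Span(\pi_{L,j}(X_1),\dots,\pi_{L,j}(X_L))\bigr)$. After that the two proofs diverge. The paper observes further that \emph{every} $d_i$ with $i\le L+1$ is preserved under $\pi_{L,j}$, which allows a full reduction to the case $n=L+1$; it then writes down the explicit identity $R_n=\frac{X_n}{|X_n|^2}-\sum_{i=1}^{n-1}\frac{X_i\cdot X_n}{|X_n|^2}R_i$ and bounds $|R_n|=1/d_n$ by the triangle inequality and Cauchy--Schwarz. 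You instead stay in the original $n$-dimensional setting, expand $\pi_{L,j}(X_j)=\sum c_i\pi_{L,j}(X_i)+r$ via the orthogonal projection, and bound each coefficient by pairing with the row $R_i$ of the \emph{full} inverse $A^{-1}$ (using $R_i\perp W$ and $R_i\cdot X_j=0$). Your route avoids the auxiliary observation that the $d_i$ for $i\le L$ are also computable after projection, and therefore skips the reduction to $n=L+1$; the paper's route gives a closed-form expression for $R_n$ but needs that extra step. Both are short elementary arguments of comparable length; your treatment of the complex case (conjugates in the Hermitian pairing, with norm estimates unaffected) is fine.
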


\begin{remark} In practice, we will be able to get good upper and lower bounds on $|\pi_{L,j}(X_i)|$.
The above lemma asserts that as long as the $d_1,\ldots,d_L$ are bounded away from zero,
all the other $d_j$ will also be bounded away from zero.
\end{remark}

\begin{proof}  By relabeling we may take $j = L+1$.  For $1 \leq i \leq L+1$, write $Y_i := \pi_{L,L+1}(X_i) \in V_{L,j}$.  By applying $\pi_{L,L+1}$ to both $X_i$ and $V_i$, we see that $d_i$ is the distance from $Y_i$ to $Y_1,\ldots,Y_{i-1},Y_{i+1},\ldots,Y_{L+1}$ for any $1 \leq i \leq L+1$.  Since $V_{L,j}$ is isomorphic to $F^{L+1}$, we see (on replacing $n$ with $L+1$, and $X_i$ with $Y_i \in V_{L,j} \equiv F^{L+1}$) that we may assume that $L+1=n$, thus $\pi_{L,j}$ is trivial and our task is now to show that
$$ d_n \geq \frac{ | X_n | }{ 1 + \sum_{i=1}^{n-1} \frac{|X_i|}{d_i} }.$$
Let $R_1,\ldots,R_n$ be the rows of $A^{-1}$.  By Corollary \ref{dvh}, $d_i = 1/|R_i|$, so our task is now to show that
$$ |R_n| \leq \frac{1}{|X_n|} + \sum_{i=1}^{n-1} \frac{|X_i|}{|X_n|} |R_i|.$$
On the other hand, observe (since $R_1,\ldots,R_n$ is a dual basis to $X_1,\ldots,X_n$) that the vector
$$ \frac{X_n}{|X_n|^2} - \sum_{i=1}^{n-1} \frac{X_i \cdot X_n}{|X_n|^2} R_i$$
is orthogonal to $X_1,\ldots,X_{n-1}$ and has an inner product of $1$ with $X_n$, and
thus must equal $R_n$.  By the triangle inequality and Cauchy-Schwarz we thus obtain the claim.
\end{proof}

\section{Berry-Esseen type results} \label{section:be} 

Let us first state the classical Berry-Ess\'een central limit
theorem (see e.g. \cite{stroock}):

\begin{proposition}[Berry-Ess\'een theorem]\label{berry}  Let $v_1,\ldots,v_n \in \R$ be real numbers with
\begin{equation}\label{vii}
v_1^2 + \ldots + v_n^2 = 1,
\end{equation}
and let $\a$ be a $\R$-normalized random variable with finite third moment $\E |\a|^3 < \infty$.  Let $S \in \R$ denote the random variable
$$ S = v_1 \a_1 + \ldots + v_n \a_n$$
where $\a_1,\ldots,\a_n$ are iid copies of $\a$. Then for any $t \in \R$ we have
$$ \P( S \leq t ) =  \P( \g_\R \leq t ) + O( \sum_{j=1}^n |v_j|^3 ),$$
where the implied constant depends on the third moment $\E |\a|^3$ of $\a$.  In particular, by \eqref{vii} we have
$$ \P( S \leq t ) =  \P( \g_\R \leq t ) + O( \max_{1 \leq j \leq n} |v_j| ).$$
\end{proposition}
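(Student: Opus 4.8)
The plan is to establish the stronger \emph{uniform} (Kolmogorov) bound $\sup_{t\in\R}\big|\P(S\le t)-\P(\g_\R\le t)\big|\ll\sum_{j=1}^n|v_j|^3$; the displayed pointwise estimate is then immediate, and the final assertion of the proposition follows from $\sum_j|v_j|^3\le(\max_j|v_j|)\sum_j v_j^2=\max_j|v_j|$. Write $\Lambda:=\E|\a|^3\sum_{j=1}^n|v_j|^3$, $\phi(u):=\E e^{iu\a}$, and $\phi_S(\theta):=\E e^{i\theta S}=\prod_{j=1}^n\phi(v_j\theta)$. Since the Kolmogorov distance never exceeds $1$, I may assume $\Lambda$ is smaller than any prescribed absolute constant; since $v_j^2=\E(v_j\a_j)^2\le(\E|v_j\a_j|^3)^{2/3}\le\Lambda^{2/3}$ for each $j$, this in particular forces $v_j^2\le\tfrac12$ for all $j$. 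The first reduction is the Esseen smoothing inequality: because the Gaussian density is bounded, for every $T>0$
$$ \sup_{t\in\R}\big|\P(S\le t)-\P(\g_\R\le t)\big|\ll\int_{-T}^{T}\frac{|\phi_S(\theta)-e^{-\theta^2/2}|}{|\theta|}\,d\theta+\frac1T. $$
I would take $T:=c\Lambda^{-1}$ with $c>0$ a small absolute constant, so that the second term is $O(\Lambda)$ and it remains only to bound the integral by $O(\Lambda)$.

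For the integrand I would use the second-order Taylor expansion of $\phi$ (legitimate since $\E\a=0$, $\E\a^2=1$, $\E|\a|^3<\infty$), which gives on one hand the pointwise product bound
$$ |\phi(v_j\theta)|\le\exp\!\Big(-\tfrac12 v_j^2\theta^2+O\big(\E|\a|^3|v_j|^3|\theta|^3\big)\Big) $$
(most cleanly via the symmetrization $|\phi(u)|^2=\E\cos(u(\a-\a'))$, with $\a'$ an independent copy of $\a$, using $\E|\a-\a'|^3\ll\E|\a|^3$), and on the other hand the per-factor estimate
$$ \big|\phi(v_j\theta)-e^{-v_j^2\theta^2/2}\big|\ll\E|\a|^3|v_j|^3|\theta|^3+v_j^4\theta^4, $$
whose second summand comes from $|e^{-s}-1+s|\le s^2/2$.

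Next I would feed these into the telescoping identity $\prod_j a_j-\prod_j b_j=\sum_k(a_1\cdots a_{k-1})(a_k-b_k)(b_{k+1}\cdots b_n)$ with $a_j=\phi(v_j\theta)$ and $b_j=e^{-v_j^2\theta^2/2}$. Using $\sum_j v_j^2=1$, the smallness $v_k^2\le\tfrac12$, and the fact that $\E|\a|^3\sum_j|v_j|^3|\theta|^3=\Lambda|\theta|\cdot\theta^2\le c\,\theta^2$ on $|\theta|\le T$, each product of the surviving factors in the telescoping is bounded by $e^{-\theta^2/8}$ (say), so that
$$ \big|\phi_S(\theta)-e^{-\theta^2/2}\big|\ll e^{-\theta^2/8}\Big(|\theta|^3\,\Lambda+\theta^4\sum_j v_j^4\Big)\qquad(|\theta|\le T). $$
Finally $\sum_j v_j^4\le(\max_j|v_j|)\sum_j|v_j|^3\le\big(\sum_j|v_j|^3\big)^{4/3}\le\sum_j|v_j|^3\le\Lambda$ (using $\sum_j|v_j|^3\le\max_j|v_j|\le1$ and $\E|\a|^3\ge1$), so dividing by $|\theta|$ and integrating over $\R$ against the Gaussian weight bounds the integral by $O(\Lambda+\sum_j v_j^4)=O(\Lambda)$, which finishes the argument.

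The step I expect to be the main obstacle is the telescoping estimate: one must extract a \emph{single} Gaussian decay factor $e^{-\Omega(\theta^2)}$ in front of the error across the entire range $|\theta|\le T$, and this is exactly where the normalization $\sum_j v_j^2=1$, the smallness $v_k^2\le\tfrac12$, and the matching choice $T\asymp\Lambda^{-1}$ must be combined with care. A lazier bookkeeping here — for instance writing $e^{-(1-v_k^2)\theta^2/2}=e^{-\theta^2/2}e^{v_k^2\theta^2/2}$ and estimating the second factor trivially — destroys the decay and degrades the final rate from $O(\sum_j|v_j|^3)$ to $O\big((\sum_j|v_j|^3)^{2/3}\big)$, which would miss the claimed bound.
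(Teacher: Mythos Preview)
Your proof is correct and follows the standard Fourier-analytic route to Berry--Ess\'een: Esseen's smoothing inequality, Taylor bounds on the characteristic function, and a telescoping replacement of the $\phi(v_j\theta)$ by Gaussians, with the choice $T\asymp\Lambda^{-1}$ ensuring a uniform $e^{-\Omega(\theta^2)}$ envelope. The bookkeeping (the reduction to $v_k^2\le\tfrac12$, the use of $\E|\a|^3\ge1$ to absorb $\sum_j v_j^4$ into $\Lambda$) is handled cleanly.

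There is nothing to compare against, however: the paper does not supply a proof of this proposition at all. It is stated as the classical Berry--Ess\'een theorem with a citation to Stroock's textbook, and is included only to motivate the higher-dimensional Proposition~\ref{berry-esseen-frame0} (which \emph{is} proved in the appendix, by a Lindeberg swapping argument rather than the Fourier method you use here). So your write-up is strictly more than what the paper provides for this particular statement.
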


Next, we prove the key Proposition \ref{berry-esseen-frame0}, which
can be seen as a high-dimensional extension of Proposition
\ref{berry}. Let us restate this proposition for the reader's
convenience.

\begin{proposition}[Berry-Ess\'een-type central limit theorem for frames]\label{berry-esseen-frame} Let $1 \leq N \leq n$, let $F$ be the real or complex field, and let $\a$ be $F$-normalized and have finite third moment $\E |\a|^3 < \infty$.  Let $v_1,\ldots,v_n \in F^N$ be a \emph{normalized tight frame} for $F^N$, or in other words
\begin{equation}\label{vivi2}
v_1 v_1^* + \ldots + v_n v_n^* = I_N,
\end{equation}
where $I_N$ is the identity matrix on $F^N$.  Let $S \in F^N$ denote the random variable
$$ S = \a_1 v_1 + \ldots + \a_n v_n,$$
where $\a_1,\ldots,\a_n$ are iid copies of $\a$.  Similarly, let $G := (\g_{F,1},\ldots,\g_{F,N}) \in F^N$
be formed from $N$ iid copies of $\g_F$.  Then for any measurable set $\Omega \subset F^N$ and any $\eps > 0$, one has
\begin{align*}
 \P( G \in \Omega \backslash \partial_\eps \Omega ) - & O(N^{5/2} \eps^{-3} (\max_{1 \leq j \leq n} |v_j|))
\leq
\P( S \in \Omega ) \\ &\leq \P( G \in \Omega \cup \partial_\eps \Omega ) + O(N^{5/2} \eps^{-3} (\max_{1 \leq j \leq n} |v_j|)), \end{align*}
where
$$ \partial_\eps \Omega := \{ x \in F^N: \dist_\infty( x, \partial \Omega ) \leq \eps \},$$
$\partial \Omega$ is the topological boundary of $\Omega$, and
and $\dist_\infty$ is the distance using the $l^\infty$ metric on $F^N$.  The implied constant depends on the third moment $\E |\a|^3$ of $\a$.
\end{proposition}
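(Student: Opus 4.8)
The plan is to prove this by the Lindeberg exchange method: first reduce the statement about the set $\Omega$ to a comparison of $\E f(S)$ with $\E f(G)$ for smooth test functions $f$, and then swap the summands $\a_j v_j$ for gaussian summands one at a time, Taylor-expanding to second order.

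\textbf{Step 1 (mollification).} Given $\Omega$ and $\eps>0$, I would construct smooth $f_-,f_+:F^N\to[0,1]$ with
$$\mathbf 1_{\Omega\setminus\partial_\eps\Omega}\le f_-\le\mathbf 1_\Omega\le f_+\le\mathbf 1_{\Omega\cup\partial_\eps\Omega},$$
obtained by convolving the indicator of the $(\eps/2)$-erosion, resp.\ the $(\eps/2)$-neighbourhood (both in the $\ell^\infty$ metric), of $\Omega$ with a fixed smooth bump $\phi_\eps(x)=\eps^{-dN}\phi(x/\eps)$ supported in the $\ell^\infty$-ball of radius $\eps/2$, where $d:=\dim_\R F\in\{1,2\}$. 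Young's inequality gives $\|\partial^\alpha f_\pm\|_{L^\infty}\ll_\alpha\eps^{-|\alpha|}$ for every multi-index $\alpha$ in the $dN$ real coordinates; in particular every coefficient of the symmetric trilinear form $D^3f_\pm(x)$ is $\ll\eps^{-3}$, so passing from $\ell^1$ to $\ell^2$ norms in dimension $dN$ yields $\sup_x\|D^3f_\pm(x)\|_{\op}\ll(dN)^{3/2}\eps^{-3}\ll N^{3/2}\eps^{-3}$. Since $\P(S\in\Omega)$ lies between $\E f_-(S)$ and $\E f_+(S)$, and likewise for $G$, the Proposition follows once one shows
$$|\E f(S)-\E f(G)|\ll N^{5/2}\eps^{-3}\max_{1\le j\le n}|v_j|$$
for any smooth $f$ with $\sup_x\|D^3f(x)\|_{\op}\ll N^{3/2}\eps^{-3}$.

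\textbf{Step 2 (identifying the limit and the hybrid argument).} First observe that $G$ has the same law as $\widetilde S:=\g_1v_1+\dots+\g_nv_n$ with $\g_1,\dots,\g_n$ iid copies of $\g_F$: both are centred $F$-gaussian vectors in $F^N$, and by the tight-frame identity $\sum_jv_jv_j^*=I_N$ they have the same covariance (in the complex case the pseudo-covariance also matches, again thanks to the $\C$-normalization of $\g_\C$). Now set $S^{(k)}:=\sum_{j\le k}\g_jv_j+\sum_{j>k}\a_jv_j$, so $S^{(0)}=S$ and $S^{(n)}$ has the law of $G$, and telescope $\E f(S)-\E f(G)=\sum_{k=1}^n\big(\E f(S^{(k-1)})-\E f(S^{(k)})\big)$. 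Writing $W_k:=\sum_{j<k}\g_jv_j+\sum_{j>k}\a_jv_j$, which is independent of both $\a_k$ and $\g_k$, I would Taylor-expand $f$ around $W_k$ to second order with third-order remainder. The zeroth-order contributions agree; the first-order contributions vanish since $\E\a_k=\E\g_k=0$ and $W_k$ is independent of these; and the second-order contributions agree because $\a$ and $\g_F$ have matching first and second moments — this is exactly the content of $F$-normalization ($\E\a^2=1=\E\g_\R^2$ in the real case; $\E\Re(\a)^2=\E\Im(\a)^2=\tfrac12$ and $\E\Re(\a)\Im(\a)=0$ in the complex case). What remains is the third-order remainder, bounded by $\tfrac16\sup_x\|D^3f(x)\|_{\op}\big(\E|\a|^3+\E|\g_F|^3\big)|v_k|^3\ll N^{3/2}\eps^{-3}|v_k|^3$.

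\textbf{Step 3 (summation) and the main obstacle.} Summing over $k$ and using $\sum_{k=1}^n|v_k|^3\le(\max_k|v_k|)\sum_{k=1}^n|v_k|^2=(\max_k|v_k|)\,\tr\!\big(\sum_kv_kv_k^*\big)=N\max_k|v_k|$ gives $|\E f(S)-\E f(G)|\ll N^{5/2}\eps^{-3}\max_k|v_k|$, completing the argument. The only genuinely delicate point is the bookkeeping in Step 1: one must produce the smooth sandwich adapted to the $\ell^\infty$ metric used in $\partial_\eps\Omega$, and extract precisely the factor $N^{5/2}$, which is the product of the $N^{3/2}$ lost in passing from coefficientwise to Euclidean operator-norm bounds on $D^3f$ in dimension $\asymp N$ and the $N=\sum_j|v_j|^2$ coming from the frame identity. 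Everything else is the standard Lindeberg computation, and the $F$-normalization hypothesis is used for nothing beyond forcing the Taylor terms of order $\le 2$ to cancel.
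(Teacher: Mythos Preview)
Your argument is correct and follows essentially the same route as the paper: Lindeberg replacement after mollification, with the $N^{5/2}$ arising as $N^{3/2}$ from the Cauchy--Schwarz bound on $(v_j\cdot\nabla)^3 f$ times $N=\sum_j|v_j|^2$ from the trace of the frame identity. The only cosmetic difference is that the paper uses a single mollified function $f = 1_\Omega * \Psi_{\eps,N}$ (with $\Psi_{\eps,N}$ a product bump) and deduces the lower bound by applying the upper bound to $\Omega^c$, whereas you build the pair $f_-,f_+$ directly.
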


\begin{proof} We shall just prove the upper bound
$$ \P( S \in \Omega ) \leq \P( G \in \Omega \cup \partial_\eps \Omega ) + O(N^{5/2} \eps^{-3} (\max_{1 \leq j \leq n} |v_j|)),$$
as the lower bound then follows by applying the claim to the complement of $\Omega$.

Let $\psi: F \to \R^+$ be a bump function supported on the unit ball $\{x \in F: |x| \leq 1 \}$ of total mass $\int_F \psi = 1$, let $\Psi_{\eps,N}: F^N \to \R^+$ be the approximation to the identity
$$ \Psi_{\eps,N}(x_1,\ldots,x_N) := \prod_{i=1}^N \frac{1}{\eps} \psi( \frac{x_i}{\eps} ),$$
and let $f: F^n \to \R^+$ be the convolution
\begin{equation}\label{third}
 f(x) = \int_{F^N} \Psi_{\eps,N}(y) 1_\Omega( x-y)\ dy
 \end{equation}
where $1_\Omega$ is the indicator function of $\Omega$.  Observe that $f$ equals $1$ on $\Omega \backslash \partial_\eps \Omega$, vanishes outside of $\Omega \cup \partial_\eps \Omega$, and is smoothly varying between $0$ and $1$ on $\partial_\eps \Omega$.  Thus it will suffice to show that
$$ |\E( f( S ) ) - \E( f( G ) )| \ll N^{5/2} \eps^{-3} (\max_{1 \leq j \leq n} |v_j|).$$

We now use a Lindeberg replacement trick (cf. \cite{lindeberg, PR}).
 Let $\g'_{F,1},\ldots,\g'_{F,n}$ be $n$ iid copies of $\g_F$.  From \eqref{vivi2} and the fact that the distribution of centered
  gaussian random variables are completely determined by their covariance matrix, we see that
$$ \g'_{F,1} v_1 + \ldots + \g'_{F,n} v_n \equiv G.$$
Thus if we define the random variables
$$ S_j := \a_1 v_1 + \ldots + \a_j v_j + \g'_{F,j+1} v_j + \ldots + \g'_{F,n} v_n \in F^N,$$
we have the telescoping triangle inequality
\begin{equation}\label{fsag}
 |\E(f(S)) - \E( f(G) )| \leq \sum_{j=1}^{n} |\E f( S_{j} ) - \E f( S_{j-1} )|,
\end{equation}
For each $1 \leq j \leq n$, we may write
$$ S_j = S'_j + \a_j v_j; S_{j-1} = S'_j + \g'_{F,j} v_j$$
where
$$ S'_j := \a_1 v_1 + \ldots + \a_{j-1} v_{j-1} + \g'_{F,j+1} v_j + \ldots + \g'_{F,n} v_n.$$
By Taylor's theorem with remainder we thus have
\begin{equation}\label{fa}
 f(S_j) = f(S'_j) + \a_j (v_j \cdot \nabla) f(S'_j) + \frac{1}{2} \a_j^2 (v_j \cdot \nabla)^2 f(S'_j) + O( |\a_j|^3 \sup_{x \in F^n} |(v_j \cdot \nabla)^3 f(x)| )
 \end{equation}
and
\begin{equation}\label{fb}
 f(S_{j-1}) = f(S'_j) + \g_{F,j} (v_j \cdot \nabla) f(S'_j) + \frac{1}{2} \g_{F,j}^2 (v_j \cdot \nabla)^2 f(S'_j) + O( |\g_{F,j}|^3 \sup_{x \in F^n} |(v_j \cdot \nabla)^3 f(x)| )
\end{equation}
in the real case $F=\R$, with a similar formula in the complex case $F=\C$ obtainable by decomposing $\a_j, \g_{F,j}$ into real and imaginary parts, which we will omit here.  A computation using \eqref{third} and the Leibnitz rule reveals that all third partial derivatives of $f$ have magnitude $O( \eps^{-3} )$, and so by Cauchy-Schwarz we have
$$ \sup_{x \in F^n} |(v_j \cdot \nabla)^3 f(x)| \ll |v_j|^3 N^{3/2} \eps^{-3}.$$
Observe that $\a_j$, $\g_{F,j}$ are independent of $S'_j$, and have the same mean and variance (in the complex case $F=\C$, we would use the covariance matrix of the real and imaginary parts rather than just the variance).  Subtracting \eqref{fa} from \eqref{fb} and taking expectations (using the bounded third moment hypothesis on $\a$) we conclude that
$$ | \E( f(S_j) ) - \E( f( S_{j-1} ) )| \ll |v_j|^3 N^{3/2} \eps^{-3} $$
and thus by \eqref{fsag}
$$ |\E(f(S)) - \E( f(G) )| \ll N^{3/2} \eps^{-3} \sum_{j=1}^n |v_j|^3.$$
On the other hand, by taking traces of \eqref{vivi2} we have
$$ \sum_{j=1}^n |v_j|^2 = N$$
and the claim follows.
\end{proof}

\begin{remark}  In our applications, $N$ and $1/\eps$ will be a very small power of $n$.
 The theorem then becomes non-trivial as soon as one obtains an  upper bound of the
  form $n^{-c}$ on the vectors $|v_i|$ for some absolute constant $c > 0$.
\end{remark}

\begin{remark}  Suppose $\a$ was now an arbitrary complex random variable of zero mean and finite variance.  If the covariance matrix of $\Re \a$ and $\Im \a$ was a scalar multiple of the identity, then one is essentially in the $\C$-normalized case and Proposition \ref{berry-esseen-frame} applies.  If instead the covariance matrix was degenerate and the $v_1,\ldots,v_n$ had purely real coefficients, then one is in the $\R$-normalized case and Proposition \ref{berry-esseen-frame} again applies.  But the situation is more complicated when the covariance matrix has two distinct non-zero eigenvalues, and depends to the extent to which the phases of $v_1,\ldots,v_n$ are aligned.  The question of what happens to the least singular value of $M_n(\a)$ in this case seems to be an interesting one (presumably, there is no alignment of phases and one should essentially revert to the $\C$-normalized case), but we will not pursue it here.
\end{remark}

\section{Concentration} \label{section:concentration}

Let us state a powerful theorem of Talagrand. Let $\BOmega_{i} $ be probability spaces equipped with 
measures $\mu_{i}$, $i=1, \dots, n$ and $\BOmega= \BOmega_{1} \times \dots \times \BOmega_{n} $ be the
product space equipped with the product measure $\mu=\mu_{1} \times \dots \times \mu_{n} $. 
A point $x \in \BOmega$ has coordinates $(x_{1}, \dots, x_{n} )$.

For a unit vector $a= (a_{1}, \dots, a_{n})$ with non-negative coordinates and $x,y \in \BOmega$, define 

$$d_{a} (x,y) := \sum_{i=1}^{n}  a_{i } \BI_{x_{i} \neq y_{i}}. $$

For a subset $A \subset \BOmega$ and $x \in \BOmega$, let 

$$d_{a} (x,A):= \inf _{y \in A} d_{a } (x,y) $$ and 

$$D(x,A):= \sup_{a, |a|=1} d_{a } (x, A). $$

In practice, the following (equivalent) definition of $D(x,A)$ is useful. Define 

$$U(x,A):= \{s \in \{0,1\}^{n} , \,\, \hbox {there is} \, y \in A \,\, \hbox{such that} \,\, y_{i}=x_{i} \,\,\hbox 
{if} \,\, s_{i}=0 \}. $$

It is not too hard (see \cite[Chapter 4]{ledoux}, \cite{tal}) to prove that    

\begin{equation} \label{eqn:newdef} 
D(x,A) := \dist (0, \conv U(x,A) ) = \inf _{s \in \conv U(x,A)} \|s\|,
\end{equation}  where $\dist$, as usual, denotes the Euclidean distance.

\begin{theorem} [Talagrand's inequality]\label{theorem:T0} \cite{tal, ledoux}
For any measurable set $A \subset \Omega$ and $r \ge 0$

$$\mu (x, D(x,A)\ge r) \mu(A) \le \exp(-r^{2}/4 ). $$

\end{theorem}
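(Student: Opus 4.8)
The plan is to deduce the tail bound from the exponential moment estimate
\begin{equation}\label{tal-moment}
\int_\Omega \exp\left( \tfrac14 D(x,A)^2 \right)\, d\mu(x) \le \frac{1}{\mu(A)},
\end{equation}
valid for every measurable $A$ with $\mu(A)>0$. Once \eqref{tal-moment} is in hand, Markov's inequality applied to the non-negative random variable $\exp(\tfrac14 D(x,A)^2)$ gives at once $\mu( x : D(x,A)\ge r) \le e^{-r^2/4}\int_\Omega \exp(\tfrac14 D(x,A)^2)\,d\mu \le e^{-r^2/4}/\mu(A)$, which is the assertion of the theorem (the case $\mu(A)=0$ being vacuous). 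So the whole proof reduces to \eqref{tal-moment}, which I would prove by induction on the number $n$ of coordinates, using throughout the reformulation $D(x,A) = \dist(0, \conv U(x,A))$ from \eqref{eqn:newdef}.

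For the base case $n=1$, one has $U(x,A) \subseteq \{0,1\}$, with $0 \in U(x,A)$ precisely when $x \in A$; hence $D(x,A)=0$ on $A$ and $D(x,A)\le 1$ off $A$, so the left side of \eqref{tal-moment} is at most $\mu(A) + (1-\mu(A)) e^{1/4}$, and the elementary inequality $t\big(t + (1-t)e^{1/4}\big) \le 1$ for $t \in (0,1]$ finishes the case. For the inductive step I would write $\Omega = \Omega' \times \Omega_n$ with $\Omega' = \Omega_1 \times \cdots \times \Omega_{n-1}$, let $B \subseteq \Omega'$ be the projection of $A$ onto $\Omega'$, and for $\omega \in \Omega_n$ let $A_\omega := \{ y' \in \Omega' : (y',\omega) \in A \}$ be the corresponding section. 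The geometric heart of the argument is the inclusion, for $x=(x',\omega)$,
\begin{equation}\label{tal-geom}
\{ (s',1) : s' \in U(x',B) \} \cup \{ (s',0) : s' \in U(x', A_\omega) \} \subseteq U(x,A),
\end{equation}
which one checks by unwinding the definition of $U$ on sections and projections. Taking convex combinations of the two pieces in \eqref{tal-geom} and using convexity of $t\mapsto |t|^2$ yields, for every $\lambda \in [0,1]$,
\begin{equation}\label{tal-sub}
D(x,A)^2 \le (1-\lambda)\, D(x',A_\omega)^2 + \lambda\, D(x',B)^2 + \lambda^2 .
\end{equation}

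With \eqref{tal-sub} established, I would integrate $\exp(\tfrac14 D(x,A)^2)$ over $x' \in \Omega'$, apply H\"older's inequality with exponents $\tfrac{1}{1-\lambda}$ and $\tfrac1\lambda$, and invoke the inductive hypothesis for the sets $A_\omega$ and $B$ (both in $\Omega'$) to obtain
\begin{equation}\label{tal-holder}
\int_{\Omega'} \exp\left( \tfrac14 D(x,A)^2 \right) d\mu'(x') \le e^{\lambda^2/4}\, \mu'(A_\omega)^{-(1-\lambda)}\, \mu'(B)^{-\lambda} .
\end{equation}
Writing $r := \mu'(A_\omega)/\mu'(B) \in [0,1]$ and optimizing the free parameter by means of the elementary one-variable inequality $\inf_{0\le\lambda\le1} e^{\lambda^2/4} r^{\lambda-1} \le 2-r$, the right side of \eqref{tal-holder} is at most $\tfrac{1}{\mu'(B)}\big( 2 - \mu'(A_\omega)/\mu'(B) \big)$. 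Integrating this over $\omega$ against $\mu_n$ and using Fubini (so that $\int \mu'(A_\omega)\, d\mu_n(\omega) = \mu(A)$) gives $\int_\Omega \exp(\tfrac14 D(x,A)^2)\,d\mu \le \tfrac{1}{\mu'(B)}\big( 2 - \mu(A)/\mu'(B)\big)$; setting $u := \mu(A)/\mu'(B) \le 1$ (since $A \subseteq B \times \Omega_n$) this equals $\tfrac{1}{\mu(A)}\, u(2-u) \le \tfrac{1}{\mu(A)}$ because $u(2-u) = 1-(1-u)^2 \le 1$. This closes the induction and proves \eqref{tal-moment}.

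The main obstacle is the interplay of the two genuinely computational ingredients: the geometric inclusion \eqref{tal-geom} (and the derivation of the sub-additivity bound \eqref{tal-sub} from it) and the optimization inequality $\inf_\lambda e^{\lambda^2/4} r^{\lambda-1} \le 2-r$; everything else is bookkeeping with H\"older and Fubini. One should also handle degenerate cases routinely — if $A_\omega$ is empty for some $\omega$ its term is trivial, and $\mu'(B)=0$ forces $\mu(A)=0$ and is excluded.
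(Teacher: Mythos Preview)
Your proposal is correct and is precisely Talagrand's original induction argument (as in \cite{tal} and \cite[Chapter~4]{ledoux}). The paper itself does not supply a proof of this theorem: it is stated as a quoted result with citations, and is then used as a black box to derive Theorem~\ref{theorem:T}. So there is nothing in the paper to compare your argument against; you have reproduced the standard proof from the cited sources, with all the key steps (the exponential-moment reformulation, the section/projection inclusion \eqref{tal-geom}, the H\"older--optimization step, and the final $u(2-u)\le 1$ bound) handled correctly.
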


We will use heavily  the following
corollary of Theorem \ref{theorem:T0}. 

\begin{theorem} \label{theorem:T}
Let $\D$ be the unit disk  $\{z\in \C, |z| \le 1 \}$. For every
product probability $\mu$ on $\D^n$, every convex $1$-Lipschitz
function $F: \C^n \to \R$, and every $r \ge 0$,
$$\mu (|F- M(F)| \ge r) \le 4 \exp(-r^2/16), $$
where $M(F)$ denotes the median of $F$.
\end{theorem}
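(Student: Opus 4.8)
The plan is to deduce Theorem \ref{theorem:T} from Talagrand's inequality (Theorem \ref{theorem:T0}) via the classical convexity argument, working throughout in $\C^n \cong \R^{2n}$ with its Euclidean norm and exploiting that the disk $\D$ has diameter $2$. Write $m := M(F)$ and fix $r \ge 0$; it suffices to bound $\mu(F \ge m+r)$ and $\mu(F \le m-r)$ each by $2\exp(-r^2/16)$ and add.

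For the upper tail, set $A := \{x \in \D^n : F(x) \le m\}$. Since $F$ is continuous and convex and $\D^n$ is convex, $A$ is a closed convex subset of $\D^n$, and $\mu(A) \ge 1/2$ by the definition of the median. The geometric heart of the matter is the claim
$$ F(x) \le m + 2\,D(x,A) \qquad \text{for all } x \in \D^n. $$
To prove it, use the characterization \eqref{eqn:newdef}: choose $s^{(1)},\dots,s^{(k)} \in U(x,A)$ and convex weights $\lambda_1,\dots,\lambda_k$ so that $h := \sum_j \lambda_j s^{(j)}$ satisfies $\|h\| = D(x,A)$, and for each $j$ pick $y^{(j)} \in A$ with $y^{(j)}_i = x_i$ whenever $s^{(j)}_i = 0$. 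Put $z := \sum_j \lambda_j y^{(j)}$. Then $z \in \D^n$ (convexity of $\D^n$) and $F(z) \le \sum_j \lambda_j F(y^{(j)}) \le m$ (convexity of $F$), so $z \in A$. Coordinatewise, $|x_i - y^{(j)}_i| \le 2$ always and $x_i - y^{(j)}_i = 0$ when $s^{(j)}_i = 0$, hence $|x_i - z_i| \le 2\sum_j \lambda_j s^{(j)}_i = 2h_i$; squaring and summing gives $\|x - z\| \le 2\|h\| = 2D(x,A)$. The $1$-Lipschitz bound then yields $F(x) \le F(z) + \|x-z\| \le m + 2D(x,A)$, proving the claim. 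Consequently $\{F \ge m+r\} \subseteq \{D(\cdot,A) \ge r/2\}$, and Theorem \ref{theorem:T0} (with radius $r/2$) gives $\mu(D(\cdot,A) \ge r/2)\,\mu(A) \le \exp(-r^2/16)$; since $\mu(A) \ge 1/2$, we get $\mu(F \ge m+r) \le 2\exp(-r^2/16)$.

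For the lower tail I would apply the very same geometric claim to the convex set $B := \{x \in \D^n : F(x) \le m-r\}$, which gives $F(x) \le (m-r) + 2D(x,B)$ for all $x \in \D^n$, hence $\{F \ge m\} \subseteq \{D(\cdot,B) \ge r/2\}$. Applying Theorem \ref{theorem:T0} to $B$ and using $\mu(D(\cdot,B) \ge r/2) \ge \mu(F \ge m) \ge 1/2$, we obtain $\mu(B) \le 2\exp(-r^2/16)$, i.e. $\mu(F \le m-r) \le 2\exp(-r^2/16)$. Combining the two tail estimates proves Theorem \ref{theorem:T}.

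I do not anticipate a real obstacle here; the two points that deserve a line of care are (i) that $A$ and $B$ are genuinely convex, which is exactly where convexity of $F$ and of $\D^n$ are used, and (ii) the "diameter $2$" bound $|x_i - y_i| \le 2$ on $\D$, which is what converts the combinatorial Talagrand distance $D(x,A)$ into a Euclidean distance with the correct numerical constant. Everything else is bookkeeping with \eqref{eqn:newdef} and the $1$-Lipschitz hypothesis.
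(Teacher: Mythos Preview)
Your proof is correct and follows essentially the same route as the paper: both deduce from \eqref{eqn:newdef} that any $x\in\D^n$ is within Euclidean distance $2D(x,A)$ of a point in $\conv A$ (the paper states this as $D(x,A)\ge\tfrac12\dist(x,\conv A)$, you prove the equivalent bound $F(x)\le m+2D(x,A)$ directly), and then handle the two tails with the same choices of sublevel sets and the same application of Theorem \ref{theorem:T0}. The only cosmetic difference is that the paper invokes Carath\'eodory explicitly to pass to a finite convex combination, whereas you do so implicitly; both are fine.
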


This corollary is the
complex version of \cite[Corollary 4.10]{ledoux} and can be
obtained by slightly modifying its proof. We provide the (simple)
details for the sake of completeness.

\begin{proof}(Proof of Theorem \ref{theorem:T})
Let $A$ be a subset of $\D^{n}$ and $x$ be a point in $\D^{n}$. By \eqref{eqn:newdef} there is a point $s \in \conv U(x,A)$ such that 

$$D(x,A) = \|s\|. $$

By Caratheodory's theorem there are $s^{1}, \dots, s^{k}  \in \cap U(x,A)$ with $k \le n+1$ such that $s$ is an affine combination of the $s^{j} $. In other words, there are positive numbers $c_{1}, \dots, c_{k}$ such that $\sum_{j=1} ^{k } c_{j} =1 $ and 

$$s= c_{1 } s^{1} + \dots + c_{k} s^{k} . $$

Let $s_{i} $ ($s^{j} _{i} $) be the $i$th coordinate of $s$ ($s^{j} $), respectively. Let $y^{j} $ be the point in $A$ that corresponds to $s^{j} $ (with respect to the definition of $U(x,A)$). We have 

$$D(x,A)=\|s\| = \sqrt {\sum_{i=1} ^{n } (\sum_{j=1} ^{k } c_{j} s^{j} _{i} )^{2} }
\ge \sqrt {\sum_{i=1} ^{n } (\sum_{j=1} ^{k } c_{j} \BI_{x_{i}\neq y^{j}_{i}} )^{2} }. $$

Since $|x_{i}- y^{j} _{i} |\le 2 $ (here is the only place where we use the fact that $\D$ has bounded perimeter), it follows

$$2 D(x,A)= 2 \|s\| \ge \sqrt {\sum_{i=1} ^{n } (\sum_{j=1} ^{k } c_{j} (x_{i}-y^{j}_{i})^{2} }
= \|x- \sum_{j=1} ^{k} c_{j} y^{j} \|. $$

Notice that the right hand side is at least the Euclidean distance from $x$ to the convex  hull of $A$, so we have 

\begin{equation} \label{eqn:convex} D(x,A) \ge \frac{1}{2} \dist (x, \conv A). \end{equation}

Let $F$ be a function as in the statement of the theorem. Let $A:= \{x| F(x) \le M(F)\}$. By the definition of median $\mu (A) \ge 1/2$. Furthermore, as $F$ is convex, so is $A$. On the other hand, if 
$F(x) \ge M(F)+r$, then by the Lipschitz property 
$\dist (x, A) \ge r$, which, via \eqref{eqn:convex}, implies that $D(x,A) \ge r/2$. By Theorem \ref{theorem:T0}, we have 

$$\mu (x, F(x) \ge M(F)+r ) \mu (A) \le \exp (-r^{2}/16) , $$ which implies 

$$\P( F(x) -M(F) \ge r) \le 2 \exp(-r^{2} /16). $$ For the lower tail, set $A:=\{x, F(x) \le M(F)-r\}$ and argue similarly. 
\end{proof}

An easy change of variables reveals the following generalization
of this inequality: if $\mu$ is supported on a dilate $K \cdot \D^n$ of
the unit disk for some $K > 0$, rather than $\D^n$ itself, then for every $r > 0$ we have
\begin{equation}\label{mood}
\mu (|F- M(F)| \ge r) \le 4 \exp(-r^2/16K^2).
\end{equation}

Theorem \ref{theorem:T} shows concentration around the median. In applications, it is usually more useful to have concentration around the mean. This can be done via the following lemma, which shows that concentration around the median implies that the mean and the median are close.

\begin{lemma} \label{lemma:MM} 
Let $X$ be a random variable such that for any $r \ge 0$

$$\P(|X- M(X)| \ge r) \le 4 \exp(-r^{2} ) . $$ Then 

$$|\E(X)- M(X) | \le 100. $$
\end{lemma}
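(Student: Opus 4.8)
The plan is to control the difference $|\E(X) - M(X)|$ by integrating the tail bound. First I would recall the standard identity that for any random variable $Y$ with finite mean, $\E(Y) - M(X) = \int_0^\infty \P(Y - M(X) > r)\,dr - \int_0^\infty \P(Y - M(X) < -r)\,dr$, applied with $Y = X$; equivalently, writing $Z := X - M(X)$, one has $\E(Z) = \int_0^\infty \P(Z > r)\,dr - \int_0^\infty \P(Z < -r)\,dr$, so that $|\E(X) - M(X)| = |\E(Z)| \le \int_0^\infty \P(|Z| \ge r)\,dr$.

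Next I would simply substitute the hypothesized tail bound $\P(|Z| \ge r) \le 4\exp(-r^2)$ into this integral. This gives $|\E(X) - M(X)| \le \int_0^\infty 4 e^{-r^2}\,dr = 4 \cdot \frac{\sqrt{\pi}}{2} = 2\sqrt{\pi}$, which is comfortably less than $100$. (One does not even need the Gaussian integral: bounding $\P(|Z|\ge r) \le \min(1, 4e^{-r^2})$ and splitting the integral at $r = 2$ already gives a crude bound like $2 + \int_2^\infty 4 e^{-r^2}\,dr \le 2 + 4\int_2^\infty e^{-r}\,dr = 2 + 4e^{-2} < 100$.) One small point to address is that the hypothesis does not a priori guarantee $\E|X| < \infty$; but the Gaussian-type tail bound forces all moments of $Z = X - M(X)$ to be finite, hence $\E(X)$ is well-defined and finite, which legitimizes the integral identity above.

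The main (and only) obstacle is essentially bookkeeping: making sure the tail-integral formula for the mean is invoked correctly (it requires a signed decomposition into the positive and negative parts of $Z$, not just $\int \P(|Z| \ge r)\,dr$ directly for $\E|Z|$), and noting that the bound $4$ at $r=0$ is useless on its own so one either uses the genuine Gaussian integral or truncates at a small cutoff. Neither of these is serious, and the constant $100$ is wildly generous, so no sharpness is needed anywhere.
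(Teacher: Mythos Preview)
Your proof is correct and follows essentially the same idea as the paper: control $|\E(X)-M(X)|$ by integrating the concentration tail bound. The paper carries this out via a discrete decomposition of the real line into unit intervals around $M(X)$ and sums $4\sum_i |i|e^{-i^2}$, whereas you use the continuous layer-cake identity $|\E(X)-M(X)| \le \int_0^\infty \P(|X-M(X)|\ge r)\,dr \le 2\sqrt{\pi}$; your version is cleaner and gives a sharper constant, but the content is the same.
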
 

The bound 100 is \emph{ad hoc} and can be replaced by a much smaller constant. 

\begin{proof} Set $M:=M(X)$ and let $F(x)$ be the distribution function of $X$. We have 

$$\E(X)= \sum_{i} \int_{M-i} ^{M-i+1} x \dd F(x) \le M + 4 \sum_{i} |i| e^{-i^{2}}  \le M + 100. $$

The lower bound can be proved similarly. 
\end{proof}

Now we use Theorem \ref{theorem:T} to prove Lemma
\ref{corup0}. Let us first  restate this lemma.

\begin{lemma}[Concentration estimate]\label{corup} Let $n \geq d \geq 1$ be integers,
let $K \geq 1$, let $F$ be the real or complex field, let $\a$ be $F$-normalised with $|\a| \leq K$ almost surely, and let $V$ be a subspace of $F^n$ of dimension $d$.  Let $X := (\a_1,\ldots,\a_n)$, where $\a_1,\ldots,\a_n$ are iid copies of $\a$.  Suppose that $d > CK^2$ for some sufficiently large absolute constant $C$.  Then
$$ \P( |\dist(X,V) - \sqrt{d}| \geq \sqrt{d}/2 ) \ll \exp( - \Omega( d / K^2 ) ).$$
\end{lemma}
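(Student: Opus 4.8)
The plan is to exhibit the quantity in question as a convex, $1$-Lipschitz function of the independent, bounded coordinates $\a_1,\dots,\a_n$, apply Talagrand's inequality to get concentration around a median, and then use a one-line second moment computation to locate that median within $\sqrt d/4$ of $\sqrt d$. Concretely, write $\pi_V\colon F^n\to F^n$ for the orthogonal projection onto $V$ and put $F(x):=|\pi_V(x)|$, the length of the component of $x$ in $V$ (equivalently $\dist(x,V^{\perp})$); this is the quantity whose concentration around $\sqrt d$ the lemma asserts, as in the formulation of Lemma \ref{corup0}. First I would note that $F$ is $1$-Lipschitz, since $\pi_V$ has operator norm $1$, and convex, being the composition of the linear map $\pi_V$ with the Euclidean norm; in the complex case one simply identifies $\C^n$ with $\R^{2n}$. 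Since $|\a|\le K$ almost surely, the law of $X=(\a_1,\dots,\a_n)$ is a product measure supported on $(K\D)^n$, so the dilated Talagrand estimate \eqref{mood} applies and yields, for every $r\ge 0$, $\P(|F(X)-m|\ge r)\le 4\exp(-r^2/16K^2)$, where $m:=M(F(X))$ is a median of $F(X)$.

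The second step is to compute $\E F(X)^2$. Fixing an orthonormal basis $e_1,\dots,e_d$ of $V$ and using that the $\a_i$ are independent with $\E\a_i=0$ and $\E|\a_i|^2=1$ (so $\E[\a_i\overline{\a_j}]=\delta_{ij}$), one gets $\E|\langle X,e_k\rangle|^2=\sum_i|(e_k)_i|^2=1$ for each $k$, hence $\E F(X)^2=\sum_{k=1}^d \E|\langle X,e_k\rangle|^2=d$.

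The third step transfers this to the median. From the tail bound one obtains $\E(F(X)-m)^2=O(K^2)$, and from the rescaled form of Lemma \ref{lemma:MM} (apply it to $F(X)/(4K)$) one gets $|\E F(X)-m|=O(K)$. Expanding $d=\E F(X)^2=m^2+2m\,\E(F(X)-m)+\E(F(X)-m)^2$ then gives $|m^2-d|=O(mK+K^2)$. A crude first pass shows $m=O(\sqrt d)$ once $C$ is large (using $K\le\sqrt{d/C}$, so that $K^2\le\sqrt d\,K$), and reinserting $m=O(\sqrt d)$ yields $|m^2-d|=O(\sqrt d\,K)=O(d/\sqrt C)$, whence $|m-\sqrt d|=O(\sqrt d/\sqrt C)\le \sqrt d/4$ for $C$ large enough. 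Combining with the first step: if $|F(X)-\sqrt d|\ge\sqrt d/2$ then $|F(X)-m|\ge\sqrt d/4$, so $\P(|F(X)-\sqrt d|\ge\sqrt d/2)\le 4\exp(-d/256K^2)=\exp(-\Omega(d/K^2))$, as claimed.

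The only delicate point is the third step: Talagrand's inequality by itself gives concentration only around the median, which could a priori sit anywhere, so one must feed in the exact $L^2$ value of $F(X)$ together with the hypothesis $d>CK^2$ to force the corrections $O(mK)$ and $O(K^2)$ to be genuinely of lower order than $d$. Everything else — the Lipschitz and convexity properties of $F$, the second moment identity, and the final combination — is routine.
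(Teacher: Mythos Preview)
Your proof is correct and follows essentially the same route as the paper: identify $F(x)=|\pi_V(x)|$ as convex and $1$-Lipschitz, apply Talagrand's inequality \eqref{mood} to concentrate around the median, compute $\E F(X)^2=d$, and then use $d>CK^2$ to pin the median within a fraction of $\sqrt d$. The only cosmetic difference is in how the median is located---the paper passes through a tail bound on $F^2-m^2$ and integrates, whereas you expand $\E F^2$ directly using $\E(F-m)^2=O(K^2)$ and Lemma~\ref{lemma:MM}; both arrive at $m=\sqrt d+O(K)$.
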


\begin{proof} The map $X \mapsto \dist(X,V)$ is clearly convex and $1$-Lipschitz.  Applying \eqref{mood} we conclude that
\begin{equation}\label{po}
 \P( |\dist(X,V) - M( \dist(X,V) )| \geq t ) \ll \exp( - \Omega( t^2 / K^2 ) )
\end{equation}
for any $t > 0$.  On the other hand, if $\pi: F^n \to V$ is the orthogonal projection to $V$, then (by the $F$-normalization of $\a$) we have
\begin{align*}
\E \dist(X,V)^2 &= \E X^* \pi^* \pi X \\
&= \tr( \pi^* \pi ) \\
&= \dim(V) \\
&= d.
\end{align*}
By Chebyshev's inequality, we conclude that the median $M( \dist(X,V) )$ of $\dist(X,V)$ is at most $2 \sqrt{d}$.
 A simple calculation reveals that the fact
$$ |\dist(X,V)^2 - M( \dist(X,V) )^2| \geq t  $$ implies $$ |\dist(X,V) - M( \dist(X,V) )| \gg \min( t/\sqrt{d}, \sqrt{t} ).$$
(This is easiest to see by working in the contrapositive.)  Thus,
$$ \P( |\dist(X,V)^2 - M( \dist(X,V) )^2| \geq t ) \ll \exp( - \Omega( t / K^2 ) ) + \exp( - \Omega( t^2 / dK^2 ) ).$$
Integrating this, we see that
$$ d = \E \dist(X,V)^2 = M(\dist(X,V))^2 + O( K^2 ) + O( \sqrt{d} K );$$
if $d > CK^2$ for a sufficiently large $C$, we conclude that
$$ 0.9 \sqrt{d} \leq M(\dist(X,V)) \leq 1.1 \sqrt{d}$$
and the claim follows from \eqref{po}.
\end{proof}

\begin{remark} One can obtain a more precise result by also computing the variance of
$\dist(X,V)^2$; see \cite[Lemma 2.2]{tv-det} for this computation in the model case of the Bernoulli random variable.
\end{remark}

\section{Random matrices have many small singular values}\label{section:MP}

%With additional moment assumptions on $\a$, one has a quantitative rate of convergence.  For instance, we have the following result of Bai, Miao, and Tsay \cite{bmt}:

In this section, we prove Lemma \ref{bai-cor0}, which we now restate.

\begin{lemma}[Many small singular values]\label{bai-cor}  Let $n \geq 1$, and let $\a$ be $\R$-normalized or $\C$-normalized,
 such that $|\a| \leq n^\eps$ almost surely for some sufficiently small absolute constant $\eps > 0$.
 Then there are positive constants $c, c'$ such that with probability 
 $1- \exp(-n^{\Omega (1)} )$, $M_{n} (\a)$ has at least 
 $c' n^{1-c}$ singular values in the interval $[0, n^{1/2-c}]$. 
\end{lemma}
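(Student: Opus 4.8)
The plan is to establish the existence of many small singular values by producing, with overwhelming probability, a subspace $W$ of dimension $\sim n^{1-c}$ on which $M_n(\a)$ acts with small operator norm; by the minimax characterization of singular values, this immediately forces at least $\dim W$ of the singular values to be at most $\|M_n(\a)|_W\|_{\op}$. The natural candidate for such a space is a coordinate subspace: if $I \subset \{1,\ldots,n\}$ is a set of $m := \lfloor n^{1-c}\rfloor$ columns, and $W = \Span(e_i : i \in I)$, then $M_n(\a)|_W$ is essentially the $n \times m$ submatrix $M'$ formed by those columns. One then needs $\|M'\|_{\op} \leq n^{1/2-c}$ with the desired probability. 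This operator-norm bound will \emph{not} hold for a fixed submatrix (the operator norm of an $n \times m$ random matrix is of order $\sqrt n$, not $\sqrt{n} \cdot n^{-c}$), so the idea must be refined: instead of taking columns as they are, one should first \emph{center and rescale} appropriately, or — more in the spirit of the rest of the paper — pass to the Wishart matrix and use the Marchenko--Pastur law together with a large-deviation (concentration) estimate.

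Here is the route I would actually take, which matches the section title ``Random matrices have many small singular values'' and the reference to \cite[Theorem 5.8]{BS}. Work with $H := \frac1n M_n(\a) M_n(\a)^*$, whose eigenvalues are $\frac1n \sigma_i(M_n(\a))^2$. The Marchenko--Pastur law says the empirical spectral distribution of $H$ converges to the measure supported on $[0,4]$ with density $\frac{1}{2\pi x}\sqrt{4x - x^2}$; in particular, for any fixed small $\delta > 0$, the limiting measure assigns mass $\rho(\delta) > 0$ to the interval $[0,\delta]$, and a short computation shows $\rho(\delta) \gg \delta^{1/2}$ for small $\delta$. So \emph{in expectation} a positive proportion of the $\sigma_i(M_n(\a))^2/n$ lie in $[0,\delta]$. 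To upgrade ``in expectation / in probability'' to ``with probability $1 - \exp(-n^{\Omega(1)})$'', I would apply a concentration inequality for linear eigenvalue statistics: the function $M \mapsto \#\{i : \lambda_i(\frac1n MM^*) \leq \delta\}$ can be compared (via a smooth approximation to $\mathbf 1_{[0,\delta]}$) to a Lipschitz-in-Frobenius-norm functional of the entries of $M$, which, under the boundedness hypothesis $|\a| \leq n^\eps$, concentrates around its mean by Talagrand's inequality (Theorem~\ref{theorem:T}, in the form \eqref{mood} with $K = n^\eps$). Choosing $\delta = n^{-2c}$ for a suitable small $c$ (so that $n^{1/2-c} = \sqrt{n\delta}$) and noting $\rho(n^{-2c}) \gg n^{-c}$, this yields at least $c' n \cdot n^{-c} = c' n^{1-c}$ singular values in $[0, n^{1/2-c}]$ with probability $1 - \exp(-n^{\Omega(1)})$, provided $\eps$ (hence $c$) is chosen small enough that the Lipschitz constant loss is absorbed.

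The two technical points to be careful about. First, the counting function is a discontinuous functional of the spectrum, so Talagrand cannot be applied to it directly; one must interpose a Lipschitz surrogate $\varphi$ with $\mathbf 1_{[0,\delta/2]} \leq \varphi \leq \mathbf 1_{[0,\delta]}$, note that $M \mapsto \sum_i \varphi(\lambda_i(\frac1n MM^*))$ is Lipschitz in the entries with constant $O(\|\varphi'\|_\infty \cdot n^{-1/2})$ (using Hoeffman--Wielandt, Lemma~\ref{hw}), and then the Talagrand deviation is of order $\exp(-\Omega(r^2/(n^{2\eps} \cdot n^{-1} \cdot \|\varphi'\|_\infty^2)))$; since we only need a deviation of size $r \sim c' n^{1-c}$ from a mean of comparable size, and $\|\varphi'\|_\infty \sim \delta^{-1} = n^{2c}$, this is $\exp(-\Omega(n^{1 - O(c) - O(\eps)}))= \exp(-n^{\Omega(1)})$ for $c, \eps$ small. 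Second, one needs the \emph{expectation} of the counting function to be $\gg n^{1-c}$ for finite $n$, not just in the limit; this follows from a non-asymptotic version of Marchenko--Pastur, e.g.\ \cite[Theorem 5.8]{BS} applied with truncated entries — exactly the route the authors flag. I expect the main obstacle to be this last bookkeeping: getting a clean quantitative lower bound $\E \#\{i : \sigma_i^2 \leq n\delta\} \gg n \delta^{1/2}$ at the relevant scale $\delta = n^{-2c}$ (rather than for fixed $\delta$), and verifying it survives the truncation $|\a| \leq n^\eps$; everything else is a routine application of the concentration machinery already developed in Appendix~\ref{section:concentration}.
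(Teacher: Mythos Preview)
Your overall strategy matches the paper's: obtain the expected count from a quantitative Marchenko--Pastur bound, then upgrade to probability $1-\exp(-n^{\Omega(1)})$ by concentration of a linear spectral statistic via Talagrand. There is, however, one genuine gap. The version of Talagrand you invoke (Theorem~\ref{theorem:T} and its rescaled form \eqref{mood}) requires the functional to be \emph{convex} in the entries, not merely Lipschitz. A bump surrogate $\varphi$ with $\mathbf{1}_{[0,\delta/2]}\le\varphi\le\mathbf{1}_{[0,\delta]}$ is not convex, and the trace map $M\mapsto\sum_i\varphi(\lambda_i)$ inherits convexity from $\varphi$ only when $\varphi$ itself is convex (this is exactly the content of Lemma~\ref{lemma:GZ1} from \cite{GZ}). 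The paper's fix is to realise the indicator as a \emph{difference} $f_1-f_2$ of two convex $C/|I|$-Lipschitz functions, apply Talagrand to each $Z_j=\sum_i f_j(\lambda_i)$ separately, and subtract; then $Y_c\ge Z_1-Z_2$ and one gets the desired lower bound.

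A second, more minor, issue: your Lipschitz estimate for $M\mapsto\sum_i\varphi(\lambda_i(\frac{1}{n}MM^*))$ goes through the quadratic map $M\mapsto MM^*$, whose Lipschitz constant depends on $\|M\|_{\op}$ and is therefore not globally bounded; Talagrand needs a global Lipschitz bound. The paper avoids this by passing to the Hermitian dilation
\[
W_{2n}=\left(\begin{matrix}0 & M_n\\ M_n^* & 0\end{matrix}\right),
\]
whose eigenvalues are $\pm\sigma_i$, so that the spectral statistic depends on the singular values \emph{linearly} and Hoeffman--Wielandt (Lemma~\ref{hw}) delivers a clean global Lipschitz constant (Lemma~\ref{lemma:GZ2}). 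With these two repairs your argument becomes exactly the paper's proof.
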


The first ingredient of the proof is the following result on the rate of convergence to Marchenko-Pastur law. 

\begin{theorem}[Rate of convergence  to Marchenko-Pastur law]\label{theorem:rate}\cite{BS} 
There are positive constants $c_{1}, c_{2} $ such that the following holds. 
 Let $\a$ be $\R$- or $\C$-normalized random variable with bounded $c_{1}$-moment. 
Then for any $t \ge 0$, 

$$ \P\left( \sup_{t \geq 0} |\frac{1}{n} | \{ 1 \leq i \leq n: \frac{1}{n} \sigma_i( M_n( \a ) )^2 \leq t \}| - \frac{1}{2\pi} \int_0^{\min(t,4)} \sqrt{\frac{4}{x}-1}\ dx| \ge n^{-c_{2}} \right) = o(1). $$

In other words, the ESD of the Wishart matrix $\frac{1}{n} M_{n} ^{\ast } M_{n} $ converges to Marchenko-Pastur law with rate $n^{-c_{2}}$ almost surely. 

\end{theorem}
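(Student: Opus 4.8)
The plan is to deduce this from the classical Stieltjes‑transform argument for the Marchenko--Pastur law, as carried out in \cite{BS}; I only sketch the structure. Write $W_n := \frac1n M_n(\a) M_n(\a)^*$, let $F_n(t) := \frac1n |\{1 \le i \le n : \frac1n \sigma_i(M_n(\a))^2 \le t\}|$ be its empirical distribution function, set $F_{MP}(t) := \frac1{2\pi}\int_0^{\min(t,4)} \sqrt{\tfrac4x - 1}\,dx$, and put $\Delta_n := \sup_{t\ge 0} |F_n(t) - F_{MP}(t)|$. Let $m_n(z) := \frac1n \tr (W_n - z)^{-1}$ and let $m(z)$ be the Stieltjes transform of $F_{MP}$, i.e. the root of $z m^2 + z m + 1 = 0$ with $\Im m(z) > 0$ whenever $\Im z > 0$ (a direct computation gives $\frac1\pi \Im m(x+i0) = \frac1{2\pi}\sqrt{\tfrac4x - 1}$ on $(0,4)$, which identifies this with $F_{MP}$). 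As a preliminary step I would truncate $\a$ at level $n^{\eps_0}$ for a small $\eps_0 > 0$ depending on $c_1$ and renormalize; since $\a$ has bounded $c_1$-moment, this standard truncation-renormalization perturbs $W_n$ by a matrix of tiny operator norm off an event of probability $o(1)$, hence perturbs $\Delta_n$ by $o(n^{-c_2})$ (using the Hölder continuity of $F_{MP}$). So I may assume $|\a| \le n^{\eps_0}$.

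First I would invoke Bai's smoothing inequality (see \cite{BS}): for every $A > 0$ and $v > 0$,
$$\Delta_n \ll \int_{-A}^{A} |m_n(u+iv) - m(u+iv)|\,du + \frac1A + v + \sup_{x} \bigl(F_{MP}(x+2v) - F_{MP}(x-2v)\bigr).$$
Because the density of $F_{MP}$ is integrable and only $x^{-1/2}$-singular at the hard edge $0$ and vanishes like $(4-x)^{1/2}$ at $4$, the last term is $O(v^{1/2})$. Taking $A$ a large absolute constant and $v := n^{-\gamma}$ for a small $\gamma > 0$ to be chosen, the problem reduces to establishing $|m_n(u+iv) - m(u+iv)| \ll n^{-\delta}$ for all $u \in [-A,A]$, for some $\delta > 0$, on an event of probability $1 - o(1)$.

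Next I would split $m_n(z) - m(z) = (m_n(z) - \E m_n(z)) + (\E m_n(z) - m(z))$ for fixed $z$ with $\Im z = v$. For the first term, revealing the columns $X_1(\a),\ldots,X_n(\a)$ of $M_n(\a)$ one at a time gives a martingale whose increments are bounded by $O(1/(nv))$ (changing one column is a rank-$\le 2$ perturbation of $W_n$, and such a perturbation moves $m_n$ by at most a constant multiple of $1/(nv)$), so Azuma--Hoeffding yields $\P(|m_n(z) - \E m_n(z)| \ge n^{-\delta}) \le 2\exp(-\Omega(n^{1-2\delta}v^2))$, which is $\exp(-n^{\Omega(1)})$ provided $2\delta + 2\gamma < 1$; a union bound over an $n^{-O(1)}$-net of $u \in [-A,A]$ handles all $u$ at once. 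For the second term I would run the standard leave-one-column-out resolvent expansion, writing the diagonal entries of $(W_n - z)^{-1}$ via Sherman--Morrison and replacing each quadratic form $\frac1n X_k(\a)^* (W_n^{(k)} - z)^{-1} X_k(\a)$ (with $W_n^{(k)} := \frac1n \sum_{j \ne k} X_j(\a) X_j(\a)^*$) by its conditional mean $\frac1n \tr (W_n^{(k)} - z)^{-1}$; the truncated bound $|\a| \le n^{\eps_0}$ controls this replacement with an error that is $n^{O(\eps_0) - 1/2}$ times a fixed power of $1/v$. This shows $\E m_n(z)$ satisfies $z(\E m_n(z))^2 + z(\E m_n(z)) + 1 = O\bigl(n^{O(\eps_0)-1/2}\,v^{-O(1)}\bigr)$, and the stability of the quadratic $z w^2 + z w + 1 = 0$ — Lipschitz in $w$ away from the edges, and only $\tfrac12$-Hölder near $0$ and $4$ where its two roots coalesce — turns this into $|\E m_n(z) - m(z)| \ll (n^{O(\eps_0) - 1/2} v^{-O(1)})^{1/2}$.

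Finally I would choose $\gamma$, and then $\eps_0$, small enough (equivalently $c_1$ large enough) that all the errors — $v^{1/2} = n^{-\gamma/2}$ from smoothing, $n^{-\delta}$ from concentration, and $(n^{O(\eps_0)-1/2}v^{-O(1)})^{1/2}$ from the self-consistent equation — are bounded by $n^{-c_2}$ for some absolute $c_2 > 0$; combining the three steps gives $\Delta_n \le n^{-c_2}$ off an event of probability $o(1)$ (in fact $\exp(-n^{\Omega(1)})$), which is the assertion. The main obstacle is the expectation step: one must balance $v$ so that it is small enough for the smoothing error $v^{1/2}$ to be negligible, yet large enough that the polynomial loss $v^{-O(1)}$ in the resolvent expansion does not overwhelm the $n^{-1/2+O(\eps_0)}$ gain once it is further degraded by the merely Hölder stability of the self-consistent equation at the hard edge $x=0$ (precisely the edge whose small singular values the paper cares about) and at $x=4$. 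This competition is exactly what rules out the optimal rate $n^{-1/2+o(1)}$ and pins down how small $c_1, c_2, \gamma, \eps_0$ must be; everything else is routine bookkeeping.
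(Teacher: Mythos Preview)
Your sketch is essentially correct and follows the standard Stieltjes-transform route (Bai's smoothing inequality, martingale concentration of $m_n-\E m_n$ via rank-two resolvent perturbation, and the leave-one-column-out self-consistent equation for $\E m_n$), which is precisely the method of the reference \cite{BS} the theorem cites. Note, however, that the paper does \emph{not} supply its own proof of this statement: it is quoted as a black box from \cite[Theorem~8.29]{BS} (with $c_1=6$, $c_2=1/6-\epsilon$) and \cite[Theorem~1.2]{GT} (with $c_1=4$, $c_2=1/2-\epsilon$), so there is nothing in the paper to compare your argument against beyond the citation. Your proposal is thus more than the paper itself provides; it is a faithful outline of what the cited references do.
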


The values for $c_{1}, c_{2} $ are quite reasonable. In \cite[Theorem 8.29]{BS}, it is shown that one can set  $c_{1}=6$ and $c_{2} = 1/6 -\epsilon$, for any fixed $\epsilon >0$.  A more recent result 
\cite[Theorem 1.2]{GT} claimed that one can set $c_{1} =4$ and $c_{2}=1/2 -\epsilon$.

This theorem, however, does not imply the desired claim, as it only shows that $M_{n}$ has many 
small singular values with probability $1-o(1)$, while we need the failure probability to be exponentially small.  

For a constant $1/2 >c>0$, let  $Y_{c}$ be the number of singular values in the interval $[0, n^{1/2-c}]$.
From Theorem \ref{theorem:rate}, we  can at least conclude that

\begin{equation} \label{eqn:largemean} \E(Y_{c})= \Omega (n^{1-c}). \end{equation}

We are going to show that for some sufficiently small positive constant $\alpha$

\begin{equation} \label{eqn:exponential} \P (Y_{c} \le \alpha n^{1-c}) \le \exp (-n^{\Omega(1)} ). \end{equation}

One can achieve this goal  by following, with few minor modifications, the powerful
 approach introduced by Guionnet and Zeitouni 
 in \cite{GZ}.  We present the details for the sake of completeness and the readers' convenience. 
 
 Consider a random hermitian  matrices $W_{N}$ with 
 independent entries $w_{ij},  1\le i \le j \le N $ with support in a compact region $S$ with diameter 
 $K$. Let $f$ be a real, convex , $1$-Lipschitz function and define 
 
 $$Z := \sum_{i=1} ^{N} f(\lambda_{i} ) $$ where $\lambda_{i} $ are the eigenvalues of $\frac{1}{\sqrt N}W_{N}$. 
 We are a going to view $Z$ as the function of the variables  $w_{ij}, 1\le i \le j \le N$.  
 The main difference between the current setting and  that of  \cite{GZ} is that 
here we do not require  the real and imaginary parts of $w_{ij} $ be independent.

We use the following two lemmas from \cite{GZ}.

\begin{lemma}  \label{lemma:GZ1}  $Z$ is a convex function. 
\end{lemma}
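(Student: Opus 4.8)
The plan is to realize $Z$ as the composition of an affine map with the classical convex functional $H\mapsto\tr f(H)$ on the space of Hermitian matrices, and then to verify that convexity statement by an elementary Jensen argument using only the convexity of $f$. First I would introduce the real vector space $\CH_N$ of $N\times N$ Hermitian matrices and the map $L$ sending the tuple of entries $(w_{ij})_{1\le i\le j\le N}$ (equivalently, their real and imaginary parts) to $\frac{1}{\sqrt N}W_N\in\CH_N$. This map is $\R$-linear, and by definition $Z=g\circ L$, where $g\colon\CH_N\to\R$ is $g(H):=\sum_{i=1}^{N}f(\lambda_i(H))=\tr f(H)$ defined via functional calculus. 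Since the precomposition of a convex function with an affine map is convex, it suffices to show that $g$ is convex on $\CH_N$. Only the convexity of $f$ is used here; the $1$-Lipschitz hypothesis plays no role in this lemma (it is needed later, for the concentration estimate).

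For the convexity of $g$ the key elementary input will be the \emph{diagonal inequality}: for every $H\in\CH_N$ and every orthonormal basis $(e_i)_{i=1}^{N}$ of $\C^N$,
\[
 \sum_{i=1}^{N} f\big(\langle e_i, H e_i\rangle\big)\ \le\ \tr f(H).
\]
I would prove this from Jensen's inequality: writing the spectral decomposition $H=\sum_k\lambda_k u_ku_k^{*}$, each diagonal entry $\langle e_i,He_i\rangle=\sum_k\lambda_k|\langle e_i,u_k\rangle|^{2}$ is a convex combination of the eigenvalues of $H$ (the weights $|\langle e_i,u_k\rangle|^{2}$ summing to $1$ over $k$), so $f(\langle e_i,He_i\rangle)\le\sum_k|\langle e_i,u_k\rangle|^{2}f(\lambda_k)$; summing over $i$ and using $\sum_i|\langle e_i,u_k\rangle|^{2}=1$ gives the inequality, with equality when $(e_i)$ is an eigenbasis of $H$.

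Granting this, convexity of $g$ follows by a short chain. Given $A,B\in\CH_N$ and $t\in[0,1]$, set $C:=tA+(1-t)B$ and let $U$ be a unitary matrix diagonalizing $C$; then $\tr f(C)=\sum_i f\big((U^{*}CU)_{ii}\big)$, with $(U^{*}CU)_{ii}=t(U^{*}AU)_{ii}+(1-t)(U^{*}BU)_{ii}$. Applying the convexity of $f$ entrywise and then the diagonal inequality in the orthonormal basis given by the columns of $U$, once to $A$ and once to $B$, yields
\[
 \tr f(C)\ \le\ t\sum_i f\big((U^{*}AU)_{ii}\big)+(1-t)\sum_i f\big((U^{*}BU)_{ii}\big)\ \le\ t\,\tr f(A)+(1-t)\,\tr f(B),
\]
i.e. $g(C)\le t\,g(A)+(1-t)\,g(B)$. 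Combined with the factorization $Z=g\circ L$ this proves the lemma. (Alternatively one could deduce convexity of $g$ from Klein's inequality $\tr\big[f(A)-f(B)-(A-B)f'(B)\big]\ge 0$ after mollifying $f$ to be smooth and passing to the limit, but the Jensen argument above avoids any smoothing.)

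I do not expect a genuine obstacle here. The only points requiring a little care are that the entry-to-matrix map $L$ is honestly $\R$-linear (so that $Z=g\circ L$ is a convex function precomposed with an affine one), and that the whole argument for $g$ uses nothing beyond the convexity of $f$, so no differentiability, smoothing, or use of the Lipschitz bound is needed.
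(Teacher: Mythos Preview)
Your argument is correct and self-contained: the factorization $Z=g\circ L$ with $L$ $\R$-linear reduces the task to the convexity of $H\mapsto\tr f(H)$ on Hermitian matrices, and your Peierls--Bogoliubov/Jensen ``diagonal inequality'' proof of that fact is standard and clean. The paper itself does not give a proof of this lemma at all; it simply cites \cite[Lemma~1.2(a)]{GZ} (Guionnet--Zeitouni) as the source. Your write-up therefore supplies what the paper leaves to a reference, and the underlying mechanism---that $\tr f(\cdot)$ is convex whenever $f$ is---is exactly the content of the cited lemma, so there is no substantive divergence in approach.
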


This lemma is a consequence of  \cite[Lemma 1.2(a)]{GZ}. 

\begin{lemma} \label{lemma:GZ2} $Z$ is $\sqrt 2$-Lipschitz. 
\end{lemma}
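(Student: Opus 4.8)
The plan is to reduce the claim to the standard fact that $M \mapsto \tr f(M)$ is $1$-Lipschitz on the space of Hermitian matrices with the Frobenius norm, and then to pass from the Frobenius norm of $W_N$ to the Euclidean norm of its defining coordinates $(w_{ij})_{1 \le i \le j \le N}$; the discrepancy between these two norms is precisely what produces the constant $\sqrt 2$ (an $\ell^2$ bound on the eigenvalue variation coming from Hoffman--Wielandt would only give a bound degrading in $N$, so the sharp constant genuinely requires differentiating).

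First I would dispose of smoothness issues by a routine approximation: convolving the convex $1$-Lipschitz function $f$ with a smooth compactly supported bump yields functions $f_\delta$ that remain convex and $1$-Lipschitz and converge to $f$ uniformly on compact sets, so (since for any fixed pair of matrices the relevant eigenvalues lie in a bounded interval) it suffices to prove the Lipschitz bound for smooth $f$ and let $\delta \to 0$. Assume then that $f$ is smooth and write $Z = \tr f(M)$ with $M := \frac{1}{\sqrt N}W_N$, which is a smooth function of the entries of $W_N$. For a Hermitian perturbation direction $H$, the first-order perturbation identity $\frac{d}{dt}\tr g(M + tH)\big|_{t=0} = \tr\big(g'(M)H\big)$ (immediate for polynomials $g$ from cyclicity of the trace, extended to smooth $g$ by approximation, or read off from the Daleckii--Krein formula) gives
$$ \frac{d}{dt} Z(W_N + tH)\Big|_{t=0} = \frac{1}{\sqrt N}\,\tr\!\big(f'(M)H\big).$$
By the Cauchy--Schwarz inequality for the Hilbert--Schmidt inner product this is at most $\frac{1}{\sqrt N}\|f'(M)\|_F\,\|H\|_F$; since $f$ is $1$-Lipschitz we have $|f'| \le 1$, hence $\|f'(M)\|_F^2 = \sum_{i=1}^N f'(\lambda_i)^2 \le N$, where $\lambda_1,\ldots,\lambda_N$ are the eigenvalues of $M$. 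Thus the directional derivative is bounded by $\|H\|_F$, i.e. $Z$ is $1$-Lipschitz with respect to the Frobenius distance on $W_N$.

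To finish, I would note that $\|W_N\|_F^2 = \sum_{i=1}^N |w_{ii}|^2 + 2\sum_{1 \le i < j \le N}|w_{ij}|^2 \le 2\big(\sum_i |w_{ii}|^2 + \sum_{i<j}|w_{ij}|^2\big)$, so the Frobenius distance between two Hermitian matrices is at most $\sqrt 2$ times the Euclidean distance between their coordinate tuples $(w_{ij})_{1 \le i \le j \le N}$; combined with the previous paragraph this gives $|Z - Z'| \le \sqrt 2\,\|(w_{ij}) - (w'_{ij})\|$, the asserted bound. There is no serious obstacle: the only points needing a little care are the justification of the perturbation identity for non-polynomial $f$ (handled by the mollification) and the elementary bookkeeping in the last step, namely that each off-diagonal entry is counted once in the parameter space but twice in $\|W_N\|_F^2$, which is exactly the source of the $\sqrt 2$ instead of $1$.
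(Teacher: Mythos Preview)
Your proof is correct, but the route differs from the paper's.  The paper does \emph{not} differentiate: it applies the Hoffman--Wielandt inequality directly to the rescaled matrices $\frac{1}{\sqrt N}W$, $\frac{1}{\sqrt N}W'$ to obtain $\sum_i |\lambda_i - \lambda'_i|^2 \le \frac{1}{N}\|W-W'\|_F^2$, then uses Cauchy--Schwarz and the $1$-Lipschitz property of $f$ to get $|Z-Z'|^2 \le N\sum_i |f(\lambda_i)-f(\lambda'_i)|^2 \le N\sum_i |\lambda_i-\lambda'_i|^2 \le \|W-W'\|_F^2$, after which the same $\|W-W'\|_F^2 \le 2\sum_{i\le j}|w_{ij}-w'_{ij}|^2$ bookkeeping yields the $\sqrt 2$.

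In particular, your parenthetical remark that ``an $\ell^2$ bound on the eigenvalue variation coming from Hoffman--Wielandt would only give a bound degrading in $N$'' is mistaken: the paper's argument shows that Hoffman--Wielandt together with a single Cauchy--Schwarz already gives the sharp $\sqrt 2$ constant, with no $N$-loss.  Your derivative-based argument is a perfectly valid alternative (and is essentially the approach in \cite{GZ} that the paper cites), but it buys nothing extra here and costs you the mollification step; the paper's version is the more elementary of the two.
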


This lemma can be derived from \cite[Lemma 1.2(b)]{GZ}. We give here a short, different, proof. 

\begin{proof} (Proof of Lemma \ref{lemma:GZ2}) 
Consider two matrices $W$ and $W'$ with entries $w_{ij} $ and $w_{ij}'$  and eigenvalues 
$\lambda_{i} $ and $\lambda_{i}'$ (in decreasing order), respectively. By Lemma \ref{hw}, 

$$\sum_{i=1} ^{N} |\lambda_{i} -\lambda_{i}'| ^{2} | \le  \frac{1}{N} \| W-W' \| _{F} ^{2} \le 
\frac{2}{N}  \sum_{1 \le i \le j \le N} |w_{ij} -w_{ij}'|^{2} . $$

On the other hand, by Cauchy-Schwarz and the fact that $f$ is $1$-Lipschitz,

$$|Z-Z'|^{2 } \le N \sum_{i=1}^{N} |f(\lambda_{i} )-f(\lambda_{i}|^{2} \le N |\lambda_{i}-\lambda_{i}'|^{2} . $$

It follows that 

$$|Z-Z'| \le \sqrt 2 \big(\sum_{1 \le i \le j \le N} |w_{ij} -w_{ij}'|^{2} \big) ^{1/2} $$ completing the proof. 
 \end{proof}

 By Theorem \ref{theorem:T}, we obtain the following theorem, which is an extension  of 
 \cite[Theorem 1.3]{GZ} to the case where the real and imaginary parts of $w_{ij}$ are not necessarily independent.

\begin{theorem} \label{theorem:GZ}  Let $W_{N}, f, Z$ be as above. 
Then there is a constant $c>0$ such that  for any $T \ge 0$

$$\P ( |Z - M(Z) | \ge T ) \ge  4 \exp( -c T^{2}/K^{2} ),$$ 

\noindent where $K$ is the bound on the absolute values of the entries of 
$W_{N}$.

\end{theorem}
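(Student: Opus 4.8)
The plan is to obtain Theorem~\ref{theorem:GZ} as a direct consequence of the Talagrand-type inequality \eqref{mood}, once the convexity and Lipschitz properties of $Z$ supplied by Lemma~\ref{lemma:GZ1} and Lemma~\ref{lemma:GZ2} are in hand. We view $Z$ as a function of the $\binom{N+1}{2}$ independent entries $w_{ij}$, $1 \le i \le j \le N$ (diagonal entries real, off-diagonal entries complex), endowed with the corresponding product measure $\mu$, each coordinate being supported in a set of diameter at most $K$. By Lemma~\ref{lemma:GZ1}, $Z$ is convex on the relevant product of convex hulls of the supports, and by Lemma~\ref{lemma:GZ2} it is $\sqrt{2}$-Lipschitz; hence $F := Z/\sqrt{2}$ is convex and $1$-Lipschitz in the Euclidean metric on these coordinates.

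To match the hypothesis of \eqref{mood} I would first perform a harmless translation of the entries: replacing each $w_{ij}$ by $w_{ij} - c_{ij}$ for fixed points $c_{ij}$ of the corresponding support (chosen real on the diagonal so as to preserve Hermiticity) shifts $W_N$ by a fixed Hermitian matrix, and therefore shifts only the argument of $Z$, leaving it convex and $\sqrt{2}$-Lipschitz. After this translation every coordinate is supported in $\{z : |z| \le K\} = K\cdot\D$, since each support has diameter at most $K$. Applying \eqref{mood} to $F$ on the product space supported in $(K\cdot\D)^{\binom{N+1}{2}}$ gives, for every $r \ge 0$,
$$ \mu\big( |F - M(F)| \ge r \big) \le 4\exp(-r^2/16K^2). $$
Since the median commutes with positive scaling, $M(F) = M(Z)/\sqrt{2}$, so $|F - M(F)| = |Z - M(Z)|/\sqrt{2}$; taking $r = T/\sqrt{2}$ we conclude
$$ \P\big( |Z - M(Z)| \ge T \big) \le 4\exp(-T^2/32K^2), $$
which is the claimed bound with $c = 1/32$.

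I do not anticipate any genuine difficulty here: the theorem is essentially a translation of \eqref{mood} into the present notation, and the only inputs beyond that inequality are the two structural lemmas borrowed from \cite{GZ}, which are already recorded above. The mild points requiring care are the mixed real (diagonal) and complex (off-diagonal) coordinate system and the translation step used to place the supports inside a common dilate of the unit disk. Alternatively, one may bypass the translation entirely and re-run the proof of Theorem~\ref{theorem:T} verbatim, replacing $\D$ by the support and the bound $|x_i - y_i| \le 2$ by $|x_i - y_i| \le K$, which yields the same estimate up to the value of the absolute constant $c$.
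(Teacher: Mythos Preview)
Your proposal is correct and follows essentially the same route as the paper: the paper simply states that the theorem follows from Theorem~\ref{theorem:T} (equivalently \eqref{mood}) once Lemmas~\ref{lemma:GZ1} and~\ref{lemma:GZ2} are in hand. Your treatment is in fact more careful than the paper's one-line derivation, in that you make explicit the translation needed to place the supports inside $K\cdot\D$ and the handling of the mixed real/complex coordinates.
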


It will be better for us to replace $M(Z)$ by $\E(Z)$. By Lemma \ref{lemma:MM} and rescaling, 

$$|M(Z)-\E(Z)|= O(K^{2}). $$

Thus, if $K^{2} =o(T)$, then (by adjusting $c$ if necessary) we have 

\begin{equation} \label{eqn:GZmean} \P ( |Z - \E(Z) | \ge T ) \ge  4 \exp( -c T^{2}/K^{2} ). \end{equation}

Recall that we are considering the singular values of a (random) non-hermitian matrix $M_{n}$ instead of the eigenvalues of a hermitian matrix $W_{N}$. This problem can be easily dealt with by the standard 
trick of defining $N:=2n$ and 

$$W_{N}:=  \left( \begin{matrix}  0  & M_{n} \\ M_{n} ^{\ast} & 0 \end{matrix} \right ) . $$

It is well known that if the singular values of $M$ are $\sigma_{1} , \dots, \sigma_{n} $, then the eigenvalues of $W$ are $\pm \sigma_{1}, \dots, \pm \sigma_{n}$. 
The number of singular values of $\frac{1}{\sqrt n}M$ in $[0, n^{-c}]$ is thus half of the number of eigenvalues of $\frac{1}{\sqrt n}W$ in $I:= [-n^{-c}, n^{-c}]$. In order to estimate the last quantity, it is natural to define 

$$Z :=\sum _{i=1}^{N} \chi_{I} (\lambda_{i} )$$ where $\chi_{I} $ is the indicator function of $I$ and $\lambda_{i} $ are the eigenvalues of $W_{N}$. This function is, however, not convex and Lipschitz. On the other hand, we can easily overcome this problem by constructing two real functions $f_{1},f_{2}
 $ such that

 \begin{itemize} 
 
 \item $f_{j}$ are symmetric, convex and $\frac{C}{|I|}$-Lipschitz, for some sufficiently large constant $C$.
 
 \item  $f_{1} (x)= f_{2} (x)$ for any $x \notin I$. 
 
 \item $f_{2}(x) +1 \ge f_{1} (x) \ge f_{2}(x)$ for any $x \in I$.
 
 \item $f_{1} (x) -f_{2} (x) \ge 1/2$ for any $x \in \frac{1}{2} I$. \end{itemize}

Define $Z_{1}, Z_{2} $ with respect to $f_{1} , f_{2}$. Applying \eqref{eqn:GZmean} to $Z_{j}$
with $T:= \alpha n^{1-c}$, for some small positive constant $\alpha$ (to be chosen),  we have 

\begin{equation} \label{eqn:Z} \P ( |Z_{j} - \E(Z_{j}) | \ge T ) 
\le 4 \exp(- \frac{cT^{2} |I|^{2}}{K^{2 } })=  \exp(-n^{\Omega (1)}), \end{equation}

\noindent given the fact that $|I|=O(n^{-c} )$ where $c$ is sufficiently small and that $K$ 
(the bound on the absolute values of the entries) is a sufficiently small power of $n$. 

By \eqref{eqn:largemean}
and choosing $\alpha$ sufficiently small, we can assume  $\E(Z_{1})- \E(Z_{2} )\ge 3\alpha n^{1-c}$.  By the triangle inequality and \eqref{eqn:Z}, it follows that 

$$\P( Z_{1} -Z_{2} \le  \alpha n^{1-c} )  \le   \exp(-n^{\Omega (1)}). $$

The desired bound \eqref{eqn:exponential} follows from this and the fact that 
 
 \begin{align*} Y_{c} &= \sum _{i=1}^{N} \chi_{I} (\lambda_{i}) \\
 &\ge \sum_{i=1} ^{N } f_{1} (\lambda_{i})-f_{2} (\lambda_{i} ) \\
 &= Z_{1} -Z_{2} . \end{align*}

{\it Acknowledgement.} We would like to thank P. Wood for the figures used in this paper and his careful reading and suggestions, O. Zeitouni for a  useful conversation 
concerning Appendix \ref{section:MP}, P. Forrester and B. Rider for enlightening remarks.

\end{document}